\numberwithin{equation}{section}
\theoremstyle{plain}
\newtheorem{theorem}{Theorem}[section]
\newtheorem{proposition}[theorem]{Proposition}
\newtheorem{corollary}[theorem]{Corollary}
\theoremstyle{definition}
\newtheorem*{rh-pb*}{Basic RH problem}
\newtheorem*{rh-I*}{RH problem normalized as $I$ at $\infty$}
\newtheorem*{sol-rh-pb*}{Soliton RH problem}
\newtheorem*{data*}{Data of this RH problem associated with $\BS{u_0(x)}$}
\theoremstyle{remark}
\newtheorem{remark}[theorem]{Remark}
\newtheorem*{notations*}{Notations}
\providecommand{\BS}[1]{\boldsymbol{#1}}  
\providecommand{\D}[1]{\mathbb{#1}}
\newcommand{\dd}{\mathrm{d}}
\newcommand{\eul}{\mathrm{e}}
\newcommand{\ii}{\mathrm{i}}
\newlength{\dhatheight}
\providecommand{\accol}[1]{\lbrace#1\rbrace}
\renewcommand{\Im}{\operatorname{Im}}
\newcommand{\ord}{\mathrm{O}}
\DeclareMathOperator{\Res}{Res}
\newif\ifshort
\begin{document}
\title[Riemann--Hilbert approach for the mCH equation]{A Riemann--Hilbert approach to the modified Camassa--Holm equation with step-like boundary conditions}
\author[I.~Karpenko]{Iryna Karpenko}
\address{B.~Verkin Institute for Low Temperature Physics and Engineering\\
47 Nauky Avenue\\ 61103 Kharkiv\\ Ukraine\\ and
Faculty of Mathematics\\ University of Vienna\\
Oskar-Morgenstern-Platz 1\\ 1090 Wien\\ Austria}
\email{inic.karpenko@gmail.com}
\author[D.~Shepelsky]{Dmitry Shepelsky}
\address{B.~Verkin Institute for Low Temperature Physics and Engineering\\
47 Nauky Avenue\\ 61103 Kharkiv\\ Ukraine}
\email{shepelsky@yahoo.com}
\author[G.~Teschl]{Gerald Teschl}
\address{Faculty of Mathematics\\ University of Vienna\\
Oskar-Morgenstern-Platz 1\\ 1090 Wien\\ Austria}
\email{Gerald.Teschl@univie.ac.at}
\subjclass[2010]{Primary: 35Q53; Secondary: 37K15, 35Q15, 35B40, 35Q51, 37K40}
\keywords{Riemann--Hilbert problem, Camassa--Holm equation}
\date{\today}
\begin{abstract}
The paper aims at developing the Riemann--Hilbert (RH) approach for the modified Camassa--Holm (mCH) equation on the line with non-zero boundary conditions, 
in the case when the solution is assumed to approach two different constants at
different sides of the line. We present detailed properties of spectral functions associated with 
the initial data for the Cauchy problem for the mCH equation and
obtain a representation for the solution of this problem in terms of the solution of an associated RH problem.
\end{abstract}
\maketitle
\section{Introduction}\label{sec:1}

%

In the present paper, we consider the initial value problem for the mCH equation \eqref{mCH-1}:
\begin{subequations}\label{mCH1-ic}
\begin{alignat}{4}           \label{mCH-1}
&m_t+\left((u^2-u_x^2)m\right)_x=0,&\quad&m\coloneqq u-u_{xx},&\quad&t>0,&\;&-\infty<x<+\infty,\\
&u(x,0)=u_0(x),&&&&&&-\infty<x<+\infty\label{IC},
\end{alignat}
\end{subequations}
assuming that 
\begin{equation}\label{mCH-bc}
u_0(x)\to\begin{cases}
A_1 \text{ as } x\to-\infty\\
A_2 \text{ as } x\to\infty\\
\end{cases},
\end{equation}
where $A_1$ and $A_2$ are some different constants, 
and that the solution $u(x,t)$ preserves this behavior for all fixed $t>0$.

Equation \eqref{mCH-1} is an integrable modification, with cubic nonlinearity, of the Camassa--Holm (CH) equation \cites{CH93,CHH94}
\begin{equation}\label{CH-1}
m_t+\left(u m\right)_x + u_x m=0,\quad m:= u-u_{xx}.
\end{equation}
The  Camassa--Holm equation has been studied intensively over more than two decades, due to its rich mathematical structure as well as applications for modeling the unidirectional propagation of shallow water waves over a flat bottom \cites{J02,CL09}. The CH and mCH equations are both integrable in the sense that they have Lax pair representations, which allows developing the inverse scattering transform (IST) method, in one form or another, to study the properties of solutions of initial (Cauchy) and initial boundary value problems for these equations. In particular, the inverse scattering method in the form of a Riemann--Hilbert (RH) problem developed for the CH equation with linear dispersion \cite{BS08} allowed studying the large-time behavior of solutions of initial as well as initial boundary value problems for the CH equation \cites{BS08-2,BKST09,BS09,BIS10} using the (appropriately adapted) nonlinear steepest descent method \cite{DZ93}.

Over the last few years various modifications and generalizations of the CH equation have been introduced, see, e.g., \cite{YQZ18} and references therein. Novikov \cite{N09} applied a perturbative symmetry approach in order to classify integrable equations of the form
\[
\left(1-\partial_x^2\right) u_t=F(u, u_x, u_{xx}, u_{xxx}, \dots),\qquad u=u(x,t), \quad \partial_x:=\partial/\partial x,
\]
assuming that $F$ is a homogeneous differential polynomial over $\D{C}$, quadratic or cubic in $u$ and its $x$-derivatives (see also \cite{MN02}). In the list of equations presented in \cite{N09}, equation (32), which was the second equation with \emph{cubic} nonlinearity, had the form \eqref{mCH-1}. In an equivalent form, this equation was given by Fokas in \cite{F95} (see also \cite{OR96} and \cite{Fu96}). Shiff \cite{S96} considered equation \eqref{mCH-1} as a dual to the modified Korteweg--de Vries (mKdV) equation and introduced a Lax pair for \eqref{mCH-1} by rescaling the entries of the spatial part of a Lax pair for the mKdV equation. An alternative (in fact, gauge equivalent) Lax pair for \eqref{mCH-1} was given by Qiao \cite{Q06}, so the mCH equation is also referred to as the Fokas--Olver--Rosenau--Qiao (FORQ) equation \cite{HFQ17}.

The local well-posedness and wave-breaking mechanisms for the mCH equation and its generalizations, particularly, the mCH equation with linear dispersion, are discussed in \cites{GLOQ13,FGLQ13,LOQZ14,CLQZ15,CGLQ16}. Algebro-geometric quasiperiodic solutions are studied in \cite{HFQ17}. The local well-posedness for classical solutions and for global weak solutions to \eqref{mCH-1} in Lagrangian coordinates are discussed in \cite{GL18}.

The Hamiltonian structure and Liouville integrability of peakon systems are discussed in \cites{AK18,OR96,GLOQ13,CS17}. In \cite{K16}, a Liouville-type transformation was presented relating the isospectral problems for the mKdV equation and  the mCH equation, and a Miura-type map from the mCH equation to the CH equation was introduced. The B\"acklund transformation for the mCH equation and a related nonlinear superposition formula are presented in \cite{WLM20}.

In the case of the Camassa--Holm equation, the inverse scattering transform method (particularly, in the form of a Riemann--Hilbert factorization problem) works for the version of this equation (considered for functions decaying at spatial infinity) that includes  an additional linear dispersion term.
Equivalently, this problem can be rewritten as a Cauchy problem for equation \eqref{CH-1} 
considered on a constant, nonzero background. Indeed, the inverse scattering transform method requires that the spatial equation from the Lax pair associated to the CH equation have continuous spectrum. On the other hand, the asymptotic analysis of the dispersionless CH equation \eqref{CH-1} on zero background (where the spectrum is purely discrete) requires a different tool (although having a certain analogy with the Riemann--Hilbert method), namely, the analysis of a coupling problem for entire functions \cites{E19,ET13,ET16}.

In the case of the mCH equation, the situation is similar: the inverse scattering method for the Cauchy problem can be developed when equation \eqref{mCH-1} is considered on a nonzero background. The Riemann--Hilbert formalism for this problem is developed in \cite{BKS20}, and the asymptotic analysis 
of the large-time behavior of the solutions on a uniform nonzero background is presented in 
\cite{BKS21}. 

Integrable nonlinear PDE  with non-vanishing boundary conditions at infinity has received plenty of
attention in the literature, see e.g.\cites{BilM19,BM17,Dem13,IU86}.
Particularly, initial value problems 
 with initial data approaching 
different ``backgrounds'' at different spatial infinities (the so-called step-like initial data) have attracted considerable attention because they can be used as   models for studying expanding, oscillatory dispersive shock waves (DSW),
which are   large scale, coherent excitation
in dispersive systems \cites{EH16, B18}. 
Large-time evolution of step-like initial data has been studied  for models of
 uni-directional (Korteweg–de Vries equation) wave propagation 
 \cites{A16, E13}
 as well as 
 bi-directional (Nonlinear Schr\"odinger equation) wave propagation
 \cites{BFP16,BLM21,BLS21,BLS22,BV07,FLQ21,J15}.

The RH problem formalism for the step-like initial value problem for the Camassa–Holm
equation was presented in \cite{M15}, and the large-time behavior of the
solutions of this problem was discussed in \cite{M16}.

In the present paper, we develop the Riemann--Hilbert formalism to problem \eqref{mCH1-ic}
with the step-like initial data \eqref{mCH-bc}
assuming that $0<A_1<A_2$ and that $u(x,t)$ approaches its large-$x$ limits sufficiently fast.
We also assume that $m(x,0)=u_0(x)-u_{0xx}(x)>0$ for all $x$; then it can be shown that  $m(x,t)>0$ for all $t$ (see Appendix \ref{app:A}, for the case of the CH equation, see \cites{C00,CE98-1}). 
In Section 2, we introduce appropriate transformations of the Lax pair equations
and the associated Jost solutions (``eigenfunctions'') and discuss analytic properties of the eigenfunctions and the corresponding spectral functions (scattering coefficients),
including the symmetries and the behavior at the branch points. Here the analysis is performed when fixing the branches of the functions
$k_j(\lambda):=\sqrt{\lambda^2-\frac{1}{A_j^2}}$, $j=1,2$ involved in the Lax
pair transformations as having the branch cuts $(-\infty, -\frac{1}{A_j})\cup(\frac{1}{A_j}, \infty)$.

 In Section 3, the introduced eigenfunctions 
 are used in the construction of the Riemann--Hilbert problems, whose
 solutions evaluated at $\lambda=0$ (where $\lambda$ is the spectral parameter
 in the Lax pair equations) give parametric representations of the 
 solution of problem \eqref{mCH1-ic}.

The case $0<A_2<A_1$ is briefly discussed in Appendix \ref{app:B}.

\begin{notations*}
In what follows, $\sigma_1\coloneqq\left(\begin{smallmatrix}0&1\\1&0\end{smallmatrix}\right)$, $\sigma_2\coloneqq\left(\begin{smallmatrix}0&-\ii\\\ii&0\end{smallmatrix}\right)$, and $\sigma_3\coloneqq\left(\begin{smallmatrix}1&0\\0&-1\end{smallmatrix}\right)$ denote the standard Pauli matrices, 
$\mathbb{C}^+:=\{\lambda\in\mathbb{C}|\Im(\lambda)> 0\}$, and
$\mathbb{C}^-:=\{\lambda\in\mathbb{C}|\Im(\lambda)< 0\}$.
\end{notations*}

\section{Lax pairs and eigenfunctions}\label{sec:2}
\subsection{Lax pairs}
The Lax pair for the mCH equation \eqref{mCH-1}  has the following form \cite{Q06}:

\begin{subequations}\label{Lax}
\begin{alignat}{4} \label{Lax-x}
    \Phi_x(x,t,\lambda)&=U(x,t,\lambda)\Phi(x,t,\lambda), \\
    \Phi_t(x,t,\lambda)&=V(x,t,\lambda)\Phi(x,t,\lambda), \label{Lax-t}
\end{alignat}
where  the coefficients $U$ and $V$ are  defined by
\begin{alignat}{4} \label{U}
U&=\frac{1}{2}\begin{pmatrix} -1 & \lambda  m \\
-\lambda m & 1\end{pmatrix},\\ \label{V}
V&=\begin{pmatrix}\lambda^{-2}+\frac{u^2-u_x^2}{2} &
-\lambda^{-1}(u-u_x)-\frac{\lambda(u^2-u_x^2)m}{2}\\
\lambda^{-1}(u+u_x)+\frac{\lambda(u^2- u_x^2)m}{2} &
		 -\lambda^{-2}-\frac{u^2-u_x^2}{2}\end{pmatrix},
\end{alignat}
\end{subequations}
with $m(x,t)= u(x,t)-u_{xx}(x,t)$. 
The RH formalism for integrable nonlinear equations is based on using appropriately defined eigenfunctions, i.e., solutions of the Lax pair, whose behavior as functions of the spectral parameter is well-controlled in the extended complex plane. Notice that the coefficient matrices $U$ and $V$ are traceless, which provides that the determinant of a matrix solution to \eqref{Lax} (composed of two vector solutions) is independent of $x$ and $t$.

Also notice that $U$ and $V$  have singularities (in the extended complex $\lambda$-plane) at $\lambda=0$ and $\lambda=\infty$. In particular, $U$ is singular at $\lambda=\infty$,
which necessitates a special care when constructing solutions with controlled behavior
as $\lambda\to\infty$. On the other hand, $U$ becomes $u$-independent at $\lambda=0$
(a property shared by many Camassa--Holm-typed equations, including the CH equation itself),
which suggests using the behavior of the constructed solutions as $\lambda\to 0$
in order to ``extract'' the solution of the nonlinear equation in question from
the solution of an associated Riemann--Hilbert problem (whose construction, in the direct problem,
involves the dedicated solutions of the Lax pair equations).

\subsubsection*{Notations}

\begin{itemize}
    \item 
We introduce the following notations for various intervals of the real axis:
\[
\Sigma_j=(-\infty,-\frac{1}{A_j}]\cup[\frac{1}{A_j},\infty), \qquad
\dot{\Sigma}_j=(-\infty,-\frac{1}{A_j})\cup(\frac{1}{A_j},\infty),
\]
\[
\Sigma_0=[-\frac{1}{A_1},-\frac{1}{A_2}]\cup[\frac{1}{A_2},\frac{1}{A_1}], \qquad
\dot{\Sigma}_0=(-\frac{1}{A_1},-\frac{1}{A_2})\cup(\frac{1}{A_2},\frac{1}{A_1}).
\]
Notice that $\Sigma_1\subset\Sigma_2$ since we assume $A_1<A_2$.
\item
For $\lambda\in \Sigma_j$ we denote by $\lambda_+$ ($\lambda_-$) the point of the upper (lower) side of $\Sigma_j$ (i.e. $\lambda_\pm=\lim_{\epsilon\downarrow 0}\lambda\pm\ii\epsilon$).
Then we have $-\lambda_+=(-\lambda)_-$ and $\overline{\lambda_+}=\lambda_-$.
\item
$k_j(\lambda):=\sqrt{\lambda^2-\frac{1}{A_j^2}}$, $j=1,2$ with the branch 
cut $\Sigma_j$ and the branch is fixed by the condition $k_j(0)=\frac{\ii}{A_j}$.
\end{itemize}

Observe that  $\Im k_j(\lambda)\geq 0$ on $\mathbb{C}$, and $k_j(\lambda)$ is real valued on the both sides of $\Sigma_j$.
Also notice that $k_j(\lambda)=\omega_j^+(\lambda)\omega_j^-(\lambda)$, where $\omega_j^+(\lambda)=\sqrt{\lambda-\frac{1}{A_j}}$ with the branch cut  $[\frac{1}{A_j},\infty)$ and $\omega_j^+(0)=\frac{\ii}{\sqrt{A_j}}$, and $\omega_j^-(\lambda)=\sqrt{\lambda+\frac{1}{A_j}}$ with the branch cut  $(-\infty,-\frac{1}{A_j}]$ and $\omega_j^-(0)=\frac{1}{\sqrt{A_j}}$.

Observe the following symmetry relations:
\begin{subequations}\label{sym_k_i}
\begin{alignat}{4}\label{sym_k_i-a}
k_j(-\lambda)&=k_j(\lambda),\quad \lambda\in\mathbb{C}\setminus\Sigma_j,\\ \label{sym_k_i-b}
k_j(\lambda_+)&=-k_j((-\lambda)_+),\quad\lambda\in\Sigma_j,\\\label{sym_k_i-c}
\overline{k_j(\overline{\lambda})}&=-k_j(\lambda),\quad\lambda\in\mathbb{C}\setminus\Sigma_j,\\ \label{sym_k_i-d}
\overline{k_j(\lambda_+)}&=k_j(\lambda_+),\quad\lambda\in\Sigma_j
\end{alignat}
\end{subequations}
(here  \eqref{sym_k_i-b} follows from \eqref{sym_k_i-a} and \eqref{sym_k_i-c}).

In order to control the large $\lambda$ behavior of solutions of \eqref{Lax}, we introduce two gauge transformations associated with $x\to (-1)^j\infty$ and $m\to A_j$ (in a similar way as it was done in the case of the constant background \cite{BKS20}).
\begin{proposition}
Equation \eqref{mCH-1} admits  Lax pairs  of the form ($j=1,2$)
\begin{subequations}\label{Lax-Q-form}
\begin{align}
\hat\Phi_{jx}+Q_{jx}\hat\Phi_j &= \hat U_j\hat\Phi_j,\\
\hat\Phi_{jt}+Q_{jt}\hat\Phi_j &= \hat V_j\hat\Phi_j,
\end{align}
\end{subequations}
whose coefficients $Q_j\equiv Q_j(x,t,\lambda)$, $\hat U_j\equiv\hat U_j(x,t,\lambda)$, and $\hat V_j\equiv\hat V_j(x,t,\lambda)$ are $2\times 2$ matrices 
given by \eqref{Lax-1} and \eqref{Qp}, which are characterized by the following properties:
\begin{enumerate}[\rm(i)]
\item
$Q_j$ is diagonal and is unbounded as $\lambda\to\infty$.
\item
$\hat U_j=\ord(1)$ and $\hat V_j=\ord(1)$ as $\lambda\to\infty$.
\item
The diagonal parts of $\hat U_j$ and $\hat V_j$ decay as $\lambda\to\infty$.
\item
$\hat U_j\to 0$ and $\hat V_j\to 0$ as $x\to(-1)^j\infty$.
\end{enumerate}
\end{proposition}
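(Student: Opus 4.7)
The plan is to construct the gauge matrix $G_j$ and the diagonal phase $Q_j$ explicitly in two stages, then verify the four properties by direct computation. First, at the asymptote $x\to(-1)^j\infty$, the matrices $U$ and $V$ reduce to $U_j^\infty(\lambda)$ and $V_j^\infty(\lambda)$ obtained by substituting $m\to A_j$, $u\to A_j$, $u_x\to 0$. A short calculation shows that both are scalar multiples of the single matrix $M_j(\lambda)=\bigl(\begin{smallmatrix}1 & -\lambda A_j\\ \lambda A_j & -1\end{smallmatrix}\bigr)$, namely $U_j^\infty=-\frac{1}{2}M_j$ and $V_j^\infty=(\lambda^{-2}+\frac{A_j^2}{2})M_j$. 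Since $M_j$ has eigenvalues $\pm iA_jk_j(\lambda)$, both $U_j^\infty$ and $V_j^\infty$ are simultaneously diagonalized by an $x,t$-independent matrix $D_j(\lambda)$, and this provides the $\lambda$-dependent backbone of the gauge.

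Second, I would seek the full gauge in the form $\Phi=G_j\hat\Phi_j$ with $G_j(x,t,\lambda)=D_j(\lambda)P_j(x,t,\lambda)$, where $P_j$ is chosen so that $P_j\to I$ as $x\to(-1)^j\infty$ and so that, after conjugation of $U$ and $V$ by $G_j$, every contribution that would otherwise grow in $\lambda$ is either diagonal (hence absorbable into $Q_{jx}$ and $Q_{jt}$) or cancels outright. A natural candidate for $P_j$ is nearly diagonal, with off-diagonal entries of order $1/\lambda$ and diagonal entries encoding integrals of the fluctuations $m(\xi,t)-A_j$ (and, for the $t$-piece, of the analogous combinations built from $u\pm u_x$ and $(u^2-u_x^2)m$ entering $V$). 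The phase $Q_j$ is then the diagonal matrix whose $x$- and $t$-derivatives are precisely the diagonal parts extracted from the transformed Lax operators; these derivatives are automatically compatible because $U_j^\infty$ and $V_j^\infty$ commute (both being scalar multiples of $M_j$), and more generally because the mCH equation itself is the compatibility condition of the original Lax pair.

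With this in hand, the four properties follow. Property (i) holds because $Q_{jx}$ is built from $\pm iA_jk_j/2\sim \pm iA_j\lambda/2$, unbounded as $\lambda\to\infty$. Property (iv) is built into $P_j\to I$ together with $Q_{jx}|_{x\to(-1)^j\infty}$ equalling the diagonalized $U_j^\infty$, and similarly for $V$. Property (ii) reduces to checking that $D_j^{-1}(U-U_j^\infty)D_j$ and its $V$-analogue, modulated by $P_j$, remain $\ord(1)$ as $\lambda\to\infty$. Property (iii) follows by observing that the leading diagonal contribution of these conjugates, of order $\lambda(m-A_j)$ in the $x$-equation and of comparable order in the $t$-equation, is exactly what is absorbed into $Q_{jx}$ and $Q_{jt}$, leaving a decaying diagonal residue. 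The main technical obstacle is the simultaneous control of both the $x$- and $t$-parts: a single $G_j$ must deliver $\hat U_j,\hat V_j=\ord(1)$ with decaying diagonals. This is made possible precisely by the proportionality of $U_j^\infty$ and $V_j^\infty$ to $M_j$, which forces the asymptotic behaviors of both halves of the Lax pair to share one common diagonal structure and fixes $D_j(\lambda)$ uniquely up to a constant. The explicit expressions announced as \eqref{Lax-1} and \eqref{Qp} should then record the resulting formulas for $G_j$, $Q_j$, $\hat U_j$ and $\hat V_j$.
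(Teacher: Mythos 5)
Your overall strategy---diagonalize the non-decaying part of $U$ (and $V$) by an $(x,t)$-independent matrix $D_j(\lambda)$, move the resulting growing diagonal term into an explicitly integrated phase $Q_j$, and then read off the four properties---is the same as the paper's, and your observation that the limits $U_j^\infty$ and $V_j^\infty$ are both scalar multiples of one matrix $M_j(\lambda)$ (so a single $D_j$ serves both halves of the Lax pair) is exactly the structural fact the paper exploits. However, there is a genuine gap in the execution. The second stage of your construction hinges on an auxiliary $(x,t)$-dependent factor $P_j$ that you never determine: you only describe ``a natural candidate'' qualitatively. In fact no such factor is needed. The paper's decisive step is to split $U$ not as $U_j^\infty+(U-U_j^\infty)$ but as
\[
U=\frac{m(x,t)}{2A_j}\begin{pmatrix}-1&\lambda A_j\\-\lambda A_j&1\end{pmatrix}+\frac{m(x,t)-A_j}{2A_j}\,\sigma_3,
\]
i.e.\ with the \emph{full} $m$ (not its limit $A_j$) multiplying the fixed matrix. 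Conjugation by $D_j(\lambda)$ alone then turns the entire $\mathrm{O}(\lambda)$ part into the diagonal term $-\tfrac{\ii k_j(\lambda)m}{2}\sigma_3$ in one stroke, and the remainder $\hat U_j$ is already $\mathrm{O}(1)$ with $\mathrm{O}(1/\lambda)$ diagonal. With your splitting, the off-diagonal piece $\tfrac{\lambda(m-A_j)}{2}$ of $U-U_j^\infty$ conjugates to an extra growing diagonal contribution, which is precisely why you feel forced to invent $P_j$; but since the proposition asserts the coefficients are \emph{given by} \eqref{Lax-1} and \eqref{Qp}, any nontrivial $P_j$ would produce different $\hat U_j,\hat V_j$ and would not prove the stated claim. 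The resolution is $P_j\equiv I$ together with the modified splitting above.

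A second, smaller gap: the only place where equation \eqref{mCH-1} is actually used is in verifying that the single scalar $p_j$ of \eqref{p_i} is a simultaneous potential, i.e.\ that $p_{jx}=\tfrac{\ii k_j m}{2}$ \emph{and} $p_{jt}=\ii A_jk_j\bigl(-\tfrac{1}{2A_j}m(u^2-u_x^2)-\tfrac{1}{\lambda^2}\bigr)$ hold for one and the same $p_j$. This reduces to the conservation-law form $m_t=-\bigl((u^2-u_x^2)m\bigr)_x$ of \eqref{mCH-1}. Your appeal to the commutativity of $U_j^\infty$ and $V_j^\infty$ is not the relevant mechanism here (that only governs the background, not the $\int(m-A_j)\,\dd\xi$ term in $p_j$); you correctly name the Lax compatibility as the deeper reason, but the concrete check is the cross-differentiation of $p_j$, and it should be carried out.
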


\begin{proof}
Notice that $U$ in \eqref{U} can be written as
\begin{equation}
\label{U-tilde}
U(x,t,\lambda)=\frac{ m(x,t)}{2A_j}\begin{pmatrix}-1&\lambda A_j\\-\lambda A_j&1\end{pmatrix}+\frac{ m(x,t)-A_j}{2A_j}\begin{pmatrix}1&0\\
0&-1\end{pmatrix},
\end{equation}
where $m(x,t)-A_j\to 0$ as $x\to (-1)^j\infty$. The  first (non-decaying, as $x\to(-1)^j\infty$) term in \eqref{U-tilde} can be diagonalized by introducing
\begin{equation}
\label{hatPhi-Phi}
\hat\Phi_j(x,t,\lambda)\coloneqq D_j(\lambda)\Phi(x,t,\lambda),
\end{equation}
where
\begin{equation}
\label{D-j}
D_j(\lambda)\coloneqq \sqrt{\frac{1}{2}}\sqrt{\frac{1}{\ii A_jk_j(\lambda)}-1}\begin{pmatrix}
\frac{\lambda A_j}{1-\ii A_jk_j(\lambda)} & - 1  \\
- 1 & \frac{\lambda A_j}{1-\ii A_jk_j(\lambda)} \\
\end{pmatrix}
\end{equation}
with
\[
D^{-1}_j(\lambda)\coloneqq \sqrt{\frac{1}{2}}\sqrt{\frac{1}{\ii A_jk_j(\lambda)}-1}\begin{pmatrix}
\frac{\lambda A_j}{1-\ii A_jk_j(\lambda)} & 1  \\
1 & \frac{\lambda A_j}{1-\ii A_jk_j(\lambda)} \\
\end{pmatrix}.
\]
The factor $\sqrt{\frac{1}{2}}\sqrt{\frac{1}{\ii A_jk_j(\lambda)}-1}$ provides $\det D_j(\lambda)=1$ for all $\lambda$, and 
the branch of the square root is chosen so that the branch cut is $[0,\infty)$ and $\sqrt{-1}=\ii$;
then $\overline{\sqrt{w_j}}=-\sqrt{\overline{w_j}}$.
Observe that $\sqrt{\frac{1}{\ii A_jk_j(\lambda)}-1}$ is well defined as a function of $\lambda$ on $\mathbb{C}\setminus\Sigma_j$ as well as on the sides of $\Sigma_j$.
Then \eqref{hatPhi-Phi} transforms \eqref{Lax-x} into
\begin{subequations}\label{Lax-1}
\begin{equation}\label{Lax-1-x}
\hat\Phi_{jx}+\frac{\ii k_j(\lambda)m}{2}\sigma_3\hat\Phi_j=\hat U_j \hat\Phi_j,
\end{equation}
where $\hat U_j\equiv\hat U_j(x,t,\lambda)$ is given by
\begin{equation}\label{U_i-hat}
\hat U_j=\frac{\lambda(m-A_j)}{2 A_j k_j(\lambda)}
\sigma_2
+\frac{m-A_j}{2\ii A_j^2k_j(\lambda)}\sigma_3.
\end{equation}
In turn, the $t$-equation \eqref{Lax-t} of the Lax pair is transformed into
\begin{equation}\label{Lax-1-t}
\hat\Phi_{jt} +\ii A_j k_j(\lambda)
\left(-\frac{1}{2A_j} m(u^2-u_x^2)-\frac{1}{\lambda^2}\right)\sigma_3\hat\Phi_j= \hat V_j \hat\Phi_j,
\end{equation}
where $\hat V_j\equiv\hat V_j(x,t,\lambda)$ is given by
\begin{equation}\label{hat-V_i}
\begin{aligned}
\hat V_j&=
-\frac{1}{2  A_j k_j(\lambda)}\left(\lambda (u^2-u_x^2)( m - A_j)
+\frac{2 (u-A_j)}{\lambda}\right)
\sigma_2
+\frac{ \tilde u_x}{\lambda}  \sigma_1 \\
&\quad-\frac{1}{\ii A_j k_j(\lambda)}\left(A_j( u-A_j) +\frac{1}{2A_j}(u^2-u_x^2)( m - A_j)\right) \sigma_3.
\end{aligned}
\end{equation}
\end{subequations}

Now notice that equations \eqref{Lax-1-x} and \eqref{Lax-1-t} have the desired form \eqref{Lax-Q-form}, if we define $Q_j$ by
\begin{subequations}\label{Qp}
\begin{equation}\label{Q}
Q_j(x,t,\lambda)\coloneqq p_j(x,t,\lambda)\sigma_3,
\end{equation}
with
\begin{equation}\label{p_i}
p_j(x,t,\lambda)\coloneqq \ii A_j k_j(\lambda)\left(\frac{1}{2A_j}\int^x_{(-1)^j\infty} ( m(\xi,t)-A_j)\dd\xi+\frac{x}{2}-t\big(\frac{1}{\lambda^2}+\frac{A_j^2}{2}\big)\right).
\end{equation}
\end{subequations}
Indeed, we obviously have $p_{jx}=\frac{\ii k_j(\lambda)m}{2}$; on the other hand, 
the equality 
\[
p_{jt}=\ii A_j k_j(\lambda)
\left(-\frac{1}{2A_j} m(u^2-u_x^2)-\frac{1}{\lambda^2}\right)
\]
follows from 
 \eqref{mCH-1}.
\end{proof}
\begin{remark}
In \cite{BKS20}, which deals with the mCH equation on a single background,
introducing a uniformizing spectral parameter (such that $\lambda$ and the respective $k(\lambda)$
are rational with respect to it) allowed getting rid of square roots and thus avoiding
the problem of specifying particular branches.
In the present case, since we have to deal with two different functions, $k_1(\lambda)$
and $k_2(\lambda)$, associated with two different backgrounds, we keep the original
spectral parameter $\lambda$ as the spectral variable in the  RH problem formalism we are going to develop.
\end{remark}
\subsection{Eigenfunctions}

The Lax pair in the form \eqref{Lax-1} allows us to determine, via associated integral equations, dedicated solutions having a well-controlled behavior as functions of the spectral parameter $\lambda$ for large values of $\lambda$. Indeed, introducing
\begin{equation}\label{zam}
\widetilde\Phi_j=\hat\Phi_j\eul^{Q_j}
\end{equation}
(understanding $\widetilde\Phi_j$ as a $2\times 2$ matrix), equations \eqref{Lax-1-x} and \eqref{Lax-1-t} can be rewritten as
\begin{equation}\label{comsys}
\begin{cases}
\widetilde\Phi_{jx}+[Q_{jx},\widetilde\Phi_j]=\hat U_j\widetilde\Phi_j,&\\
\widetilde\Phi_{jt}+[Q_{jt},\widetilde\Phi_j]=\hat V_j\widetilde\Phi_j,&
\end{cases}
\end{equation}
where $[\,\cdot\,,\,\cdot\,]$ stands for the commutator. We now determine the Jost solutions $\widetilde\Phi_{j}\equiv\widetilde\Phi_{j}(x,t,\lambda)$, $j=1,2$ of \eqref{comsys} as the solutions of the associated Volterra integral equations:
\begin{equation}\label{inteq}
\widetilde\Phi_{j}(x,t,\lambda)=I+\int_{(-1)^j\infty}^x
	\eul^{Q_j(\xi,t,\lambda)-Q_j(x,t,\lambda)}\hat U_j(\xi,t,\lambda)\widetilde\Phi_{j}(\xi,t,\lambda)
		\eul^{Q_j(x,t,\lambda)-Q_j(\xi,t,\lambda)}\dd\xi,
\end{equation}
or, taking into account the definition \eqref{Qp} of $Q_j$,
\begin{equation}\label{eq}
\widetilde\Phi_{j}(x,t,\lambda)=I+\int_{(-1)^j\infty}^x
	\eul^{\frac{\ii k_j(\lambda)}{2}\int^\xi_x m(\tau,t)\dd\tau\sigma_3}\hat U_j(\xi,t,\lambda)\widetilde\Phi_j(\xi,t,\lambda)
			\eul^{-\frac{\ii k_j(\lambda)}{2}\int^\xi_x m(\tau,t)\dd\tau\sigma_3}\dd\xi,
\end{equation}
($I$ is the $2\times 2$ identity matrix). 

Hereafter,  $\hat\Phi_j\coloneqq\widetilde\Phi_j\eul^{-Q_j}$, $j=1,2$ denote the corresponding Jost solutions of \eqref{Lax-1} whereas  $\Phi_j\coloneqq D_j^{-1}(\lambda)\hat\Phi_j$ denote the corresponding Jost solutions of \eqref{Lax}.

We are now able to analyze the analytic and asymptotic properties of the solutions $\widetilde\Phi_j$ of \eqref{eq} as functions of $\lambda$, using Neumann series expansions. Let $A^{(1)}$ and $A^{(2)}$ denote the columns of a $2\times 2$ matrix $A=\left(A^{(1)}\ \ A^{(2)}\right)$. Using these notations we have the following properties:

\begin{enumerate}[\textbullet]
\item
$\widetilde\Phi_j^{(j)}$ is analytic in $\mathbb{C}\setminus \Sigma_j$ and has a continuous extension on the lower and upper sides of $\dot{\Sigma}_j$.
\item
$\widetilde\Phi_j^{(1)}$ and $\widetilde\Phi_j^{(2)}$ are well defined and continuous on the lower and upper sides of $\dot{\Sigma}_j$.

\end{enumerate}

In \eqref{comsys} the coefficients are traceless matrices, from which it follows that $\det \tilde\Phi_j$ is independent on $x$ and $t$, and hence
\begin{enumerate}[\textbullet]
\item
$\det\widetilde\Phi_j\equiv 1$.
\end{enumerate}

Regarding the values of $\widetilde\Phi_j$ at particular points in the $\lambda$-plane, \eqref{eq} implies the following:
\begin{enumerate}[\textbullet]
\item
$\left(\begin{smallmatrix}
\widetilde\Phi_1^{(1)} &
\widetilde\Phi_2^{(2)}\end{smallmatrix}\right)\to I$ as $\lambda\to\infty$ (since the diagonal part of $\hat U_j$ is $\ord(\frac{1}{\lambda})$ and the off-diagonal part of $\hat U_j$ is bounded).
\item $\tilde\Phi_j$ has singularities at  $\lambda=\pm\frac{1}{A_j}$  of order $\frac{1}{2}$ (this will be discussed below, see Subsection \ref{sec:branch points}). 
\end{enumerate}
\subsection{``Background'' solution}\label{sec:Background-solution}

Introduce $\Phi_{0,j}(x,t,\lambda):=D_j^{-1}(\lambda)\eul^{-Q_j(x,t,\lambda)}$. We see 
that $\Phi_{0,j}$ satisfy the differential equations:

\begin{equation}\label{back-sol}
\begin{cases}
\Phi_{0,jx}=\frac{ m(x,t)}{2A_j}\begin{pmatrix}-1&\lambda A_j\\-\lambda A_j&1\end{pmatrix}\Phi_{0,j},&\\
\Phi_{0,jt}=\left(-\frac{1}{2A_j} m(u^2-u_x^2)-\frac{1}{\lambda^2}\right)\begin{pmatrix}-1&\lambda A_j\\-\lambda A_j&1\end{pmatrix}\Phi_{0,j}.&
\end{cases}
\end{equation}

Comparing this with \eqref{Lax-Q-form}, $\Phi_{j}(x,t,\lambda)$ can be characterized as the solutions of the integral equations:
\begin{equation}\label{eq_phi}
\Phi_{j}(x,t,\lambda)=\Phi_{0,j}(x,t,\lambda)+\int_{(-1)^j\infty}^x
	\Phi_{0,j}(x,t,\lambda)\Phi_{0,j}^{-1}(y,t,\lambda)\frac{m(y,t)-A_j}{2A_j} \sigma_3\Phi_{j}(y,t,\lambda)dy.
\end{equation}

Observe that $\Phi_{0,j}(x,t,\lambda)\Phi_{0,j}^{-1}(y,t,\lambda)$ is entire w.r.t. $\lambda$. Hence the ``lack of analyticity'' (jumps) of $\Phi_{j}(x,t,\lambda)$ is generated by  the   ``lack of analyticity'' of $\Phi_{0,j}(x,t,\lambda)$. Notice that  $\det \Phi_j = \det \Phi_{0,j}=1$.


\subsection{Spectral functions}  \label{sec:spectral-data}

Introduce the scattering matrices $s(\lambda_\pm)$ for $\lambda\in\dot{\Sigma}_1$
as  matrices relating $\Phi_1$ and $\Phi_2$:
\begin{equation}\label{scat}
\Phi_1(x,t,\lambda_\pm)=\Phi_2(x,t,\lambda_\pm)s(\lambda_\pm),\qquad\lambda\in\dot{\Sigma}_1
\end{equation}
with $\det s(\lambda_\pm)=1$. In turn,
 $\tilde\Phi_1$ and $\tilde\Phi_2$ are related by
\begin{equation}\label{scat_}
D_1^{-1}(\lambda_\pm)\tilde\Phi_1(x,t,\lambda_\pm)=D_2^{-1}(\lambda_\pm)\tilde\Phi_2(x,t,\lambda_\pm)\eul^{-Q_2(x,t,\lambda_\pm)}s(\lambda_\pm)\eul^{Q_1(x,t,\lambda_\pm)},~\lambda\in\dot{\Sigma}_1.
\end{equation}
Introducing
\begin{equation}\label{scat_til}
\tilde s(x,t,\lambda_\pm):=\eul^{-Q_2(x,t,\lambda_\pm)}s(\lambda_\pm)\eul^{Q_1(x,t,\lambda_\pm)}
\end{equation}
 we have 
\begin{equation}\label{scat_2}
(D_1^{-1}\tilde\Phi_1)(x,t,\lambda_\pm)=(D_2^{-1}\tilde\Phi_2)(x,t,\lambda_\pm)\tilde s(x,t,\lambda_\pm), \qquad\lambda\in\dot{\Sigma}_1.
\end{equation}

Notice that the scattering coefficients ($s_{ij}$) can be expressed as follows:
\begin{subequations}\label{scatcoeff}
\begin{alignat}{3}\label{scatcoeff-a}
s_{11}&=\det(\Phi_1^{(1)},\Phi_2^{(2)}),\\\label{scatcoeff-b}
s_{12}&=\det(\Phi_1^{(2)},\Phi_2^{(2)}),\\\label{scatcoeff-c}
s_{21}&=\det(\Phi_2^{(1)},\Phi_1^{(1)}),\\\label{scatcoeff-d}
s_{22}&=\det(\Phi_2^{(1)},\Phi_1^{(2)}).
\end{alignat}
\end{subequations}
Accordingly, 
\begin{subequations}\label{til_scatcoeff}
\begin{alignat}{3}\label{til_scatcoeff-a}
\tilde s_{1j}&=\det((D_1^{-1}\tilde\Phi_1)^{(j)},(D_2^{-1}\tilde\Phi_2)^{(2)}),\\\label{til_scatcoeff-c}
\tilde s_{2j}&=\det((D_2^{-1}\tilde\Phi_2)^{(1)},(D_1^{-1}\tilde\Phi_1)^{(j)}).
\end{alignat}
\end{subequations}

Then \eqref{scatcoeff-a} implies that  $s_{11}(\lambda)$ can be analytically extended to $\mathbb{C}\setminus\Sigma_2$ and defined on the upper and lower sides of $\dot\Sigma_2$. On the other hand, since $\Phi_1^{(1)}$ is analytic in $\mathbb{C}\setminus\Sigma_1$ and $\Phi_2^{(1)}$ is defined on the upper and lower sides of $\Sigma_2$, $s_{21}(\lambda)$ can be extended by \eqref{scatcoeff-c} to the lower and upper sides of $\dot{\Sigma}_2$. It follows that 
\eqref{scat} and \eqref{scat_} restricted to the first column   hold also on $\Sigma_0$, namely,
\begin{equation}\label{scat-col}
    \Phi_1^{(1)}(x,t,\lambda_\pm)=s_{11}(\lambda_\pm)\Phi_2^{(1)}(x,t,\lambda_\pm)+s_{21}(\lambda_\pm)\Phi_2^{(2)}(x,t,\lambda_\pm), \quad\lambda\in\dot\Sigma_0,
\end{equation}
and, respectively,
\begin{equation}\label{scat-col_}
    (D_1^{-1}\tilde\Phi_1^{(1)})(\lambda_\pm)=\tilde s_{11}(\lambda_\pm)(D_2^{-1}\tilde\Phi_2^{(1)})(\lambda_\pm)+\tilde s_{21}(\lambda_\pm)(D_2^{-1}\tilde\Phi_2^{(2)})(\lambda_\pm), \quad\lambda\in\dot\Sigma_0.
\end{equation}
\subsection{Symmetries}  \label{sec:symmetries}
Let's analyse the  symmetry relations amongst the eigenfunctions and scattering coefficients.
In order to simplify the notations, we will omit the dependence on $x$ and $t$ (e.g., $U(\lambda)\equiv U(x,t,\lambda)$).

\textit{First symmetry: $\lambda \longleftrightarrow -\lambda$.}

\begin{proposition}\label{prop:sym_Phi_minus_notin_sigma_i} The following symmetries hold:
    \begin{subequations}\label{sym_Phi_minus_notSigma_2}
    \begin{alignat}{3}\label{sym_Phi_minus_notSigma_2-a}
    \Phi_1^{(1)}(\lambda)&=-\sigma_3\Phi_1^{(1)}(-\lambda),\quad \lambda\in\mathbb{C}\setminus\Sigma_1,\\ \label{sym_Phi_minus_notSigma_2-b}
    \Phi_2^{(2)}(\lambda)&=\sigma_3\Phi_2^{(2)}(-\lambda),\quad \lambda\in\mathbb{C}\setminus\Sigma_2.
    \end{alignat}
    \end{subequations}
\end{proposition}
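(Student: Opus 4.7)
The plan is to exploit the fact that $\lambda \mapsto -\lambda$ is a symmetry of the whole Lax system in a way that is implemented on solutions by conjugation with $\sigma_3$, and then promote this symmetry to the Jost solutions via the uniqueness of the Volterra integral equation \eqref{eq}. Finally, I pass the symmetry from $\widetilde\Phi_j$ to $\Phi_j = D_j^{-1}\widetilde\Phi_j \eul^{-Q_j}$ and read off the column-wise statements.

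First I would check the building blocks. Since $k_j(-\lambda)=k_j(\lambda)$ for $\lambda\in\mathbb{C}\setminus\Sigma_j$ by \eqref{sym_k_i-a}, formula \eqref{p_i} gives $Q_j(x,t,-\lambda)=Q_j(x,t,\lambda)$, so the exponential kernel $\eul^{\pm\frac{\ii k_j}{2}\int m\, \sigma_3}$ in \eqref{eq} is untouched by $\lambda\mapsto-\lambda$. A direct inspection of \eqref{U_i-hat}, using the identities $\sigma_3\sigma_2\sigma_3=-\sigma_2$ and $\sigma_3\sigma_3\sigma_3=\sigma_3$, yields
\[
\hat U_j(x,t,-\lambda)=\sigma_3\hat U_j(x,t,\lambda)\sigma_3.
\]
Conjugating \eqref{eq} by $\sigma_3$ on both sides, and using that $\sigma_3$ commutes with $\eul^{a\sigma_3}$, I see that $\sigma_3\widetilde\Phi_j(x,t,\lambda)\sigma_3$ satisfies exactly the same Volterra equation as $\widetilde\Phi_j(x,t,-\lambda)$, with the same normalization at $x\to(-1)^j\infty$. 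Uniqueness of Neumann iterates then gives
\[
\widetilde\Phi_j(x,t,-\lambda)=\sigma_3\widetilde\Phi_j(x,t,\lambda)\sigma_3.
\]

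Next I would work out the symmetry of the gauge factor $D_j(\lambda)$ from \eqref{D-j}. Since $k_j(-\lambda)=k_j(\lambda)$, only the explicit $\lambda$-factors in the diagonal entries of $D_j$ change sign when $\lambda\mapsto-\lambda$, so a one-line matrix computation gives
\[
D_j(-\lambda)=-\sigma_3 D_j(\lambda)\sigma_3,\qquad D_j^{-1}(-\lambda)=-\sigma_3 D_j^{-1}(\lambda)\sigma_3.
\]
Combining this with $\widetilde\Phi_j(-\lambda)=\sigma_3\widetilde\Phi_j(\lambda)\sigma_3$ and with $\eul^{-Q_j(-\lambda)}=\eul^{-Q_j(\lambda)}$, together with the fact that $\sigma_3$ commutes with the diagonal matrix $\eul^{-Q_j}$, I obtain
\[
\Phi_j(x,t,-\lambda)=-\sigma_3\,\Phi_j(x,t,\lambda)\,\sigma_3.
\]
Writing this out column by column (right-multiplication by $\sigma_3$ leaves the first column alone and flips the sign of the second), this is equivalent to $\Phi_j^{(1)}(-\lambda)=-\sigma_3\Phi_j^{(1)}(\lambda)$ and $\Phi_j^{(2)}(-\lambda)=\sigma_3\Phi_j^{(2)}(\lambda)$. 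Taking $j=1$ in the first relation (valid on $\mathbb{C}\setminus\Sigma_1$ where $\Phi_1^{(1)}$ is analytic) gives \eqref{sym_Phi_minus_notSigma_2-a}, and taking $j=2$ in the second (valid on $\mathbb{C}\setminus\Sigma_2$ where $\Phi_2^{(2)}$ is analytic) gives \eqref{sym_Phi_minus_notSigma_2-b}.

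The only delicate point is the bookkeeping of signs: the minus sign in $D_j(-\lambda)=-\sigma_3 D_j(\lambda)\sigma_3$ is exactly what makes the first-column symmetry carry a minus while the second-column one does not, so this is where a miscalculation would instantly break the proposition. Everything else is routine, provided one is careful that $k_j(-\lambda)=k_j(\lambda)$ is used only away from the branch cut $\Sigma_j$ (which is consistent with the domains stated in \eqref{sym_Phi_minus_notSigma_2}); the corresponding boundary symmetries on $\dot{\Sigma}_j$ would require \eqref{sym_k_i-b} and are presumably treated in a separate statement.
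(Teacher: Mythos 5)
Your proof is correct and rests on the same key observation as the paper's: conjugation by $\sigma_3$ implements $\lambda\mapsto-\lambda$ in the Lax system, and the normalized Jost solution is unique. The only difference is one of packaging — the paper verifies $\sigma_3U(\lambda)\sigma_3=U(-\lambda)$, $\sigma_3V(\lambda)\sigma_3=V(-\lambda)$ for the original Lax pair and matches asymptotics as $x\to(-1)^j\infty$ directly for $\Phi_j^{(j)}$, whereas you run the argument through the Volterra equation \eqref{eq} for $\widetilde\Phi_j$ and then transfer back via $D_j^{-1}(-\lambda)=-\sigma_3D_j^{-1}(\lambda)\sigma_3$ and $Q_j(-\lambda)=Q_j(\lambda)$, which is precisely the computation the paper uses in the subsequent corollary to go in the opposite direction.
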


\begin{proof}
Observe that $\sigma_3U(\lambda)\sigma_3\equiv U(-\lambda)$ and $\sigma_3V(\lambda)\sigma_3\equiv V(-\lambda)$. Hence $\sigma_3 \Phi_j^{(j)}(-\lambda)$ solves \eqref{Lax} together with $\Phi_j^{(j)}(\lambda)$. Comparing their asymptotic behaviour as $x\to (-1)^j\infty$ and using \eqref{sym_k_i-a}, the symmetries \eqref{sym_Phi_minus_notSigma_2} follow.
\end{proof}

\begin{corollary}
We have
\begin{enumerate}
    \item \begin{equation}\label{sym_s11_minus_notSigma_2}
    s_{11}(-\lambda)=s_{11}(\lambda), \quad  \lambda\in\mathbb{C}\setminus\Sigma_2.  
    \end{equation}
    
    \item 
    \begin{subequations}\label{sym_tildePhi_minus_notSigma_2}
    \begin{alignat}{3}
    \tilde \Phi_1^{(1)}(\lambda)=\sigma_3\tilde         \Phi_1^{(1)}(-\lambda),\quad \lambda\in\mathbb{C}\setminus\Sigma_1, \\
    \tilde \Phi_2^{(2)}(\lambda)=-\sigma_3\tilde     \Phi_2^{(2)}(-\lambda),\quad \lambda\in\mathbb{C}\setminus\Sigma_2.
    \end{alignat}
    \end{subequations}
    
    \item 
\begin{subequations}\label{sym_minus_} 
\begin{alignat}{4}\label{sym_minus_a} 
(D_1^{-1}\tilde\Phi_1^{(1)})(-\lambda)&=-\sigma_3(D_1^{-1}\tilde\Phi_1^{(1)})(\lambda), \quad \lambda\in\mathbb{C}\setminus\Sigma_1,\\ \label{sym_minus_b} 
(D_2^{-1}\tilde\Phi_2^{(2)})(-\lambda)&=\sigma_3(D_2^{-1}\tilde\Phi_2^{(2)})(\lambda), \quad \lambda\in\mathbb{C}\setminus\Sigma_2.
\end{alignat}
\end{subequations}
\textcolor{red}{
}
\end{enumerate}
\end{corollary}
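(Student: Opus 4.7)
The plan is to reduce all three identities to Proposition~\ref{prop:sym_Phi_minus_notin_sigma_i} by working out the behaviour of the auxiliary ingredients $D_j(\lambda)$ and $p_j(x,t,\lambda)$ under $\lambda\mapsto -\lambda$. Two preliminary observations will do most of the work. First, by \eqref{sym_k_i-a} we have $k_j(-\lambda)=k_j(\lambda)$, and since the factor multiplying $\ii A_j k_j(\lambda)$ in \eqref{p_i} is even in $\lambda$ (it depends on $\lambda$ only through $\lambda^{-2}$), we get $p_j(x,t,-\lambda)=p_j(x,t,\lambda)$, hence $\eul^{Q_j(x,t,-\lambda)}=\eul^{Q_j(x,t,\lambda)}$. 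Second, a direct inspection of \eqref{D-j} gives the key identity
\begin{equation*}
D_j(-\lambda)=-\sigma_3\,D_j(\lambda)\,\sigma_3,
\end{equation*}
because under $\lambda\mapsto-\lambda$ only the off-diagonal factor $\lambda A_j$ in the matrix entries of $D_j$ changes sign (the scalar prefactor being a function of $k_j(\lambda)$ alone), while conjugation by $\sigma_3$ flips the sign of the off-diagonal block and an overall $-1$ restores the two diagonal entries.

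For claim (1), I use the representation \eqref{scatcoeff-a} of $s_{11}$ as a determinant, combined with Proposition~\ref{prop:sym_Phi_minus_notin_sigma_i}:
\begin{equation*}
s_{11}(-\lambda)=\det\bigl(\Phi_1^{(1)}(-\lambda),\Phi_2^{(2)}(-\lambda)\bigr)
=\det\bigl(-\sigma_3\Phi_1^{(1)}(\lambda),\sigma_3\Phi_2^{(2)}(\lambda)\bigr)
=\det(\sigma_3)^2\cdot(-1)^2\det\bigl(\Phi_1^{(1)}(\lambda),\Phi_2^{(2)}(\lambda)\bigr)=s_{11}(\lambda),
\end{equation*}
valid on $\mathbb{C}\setminus\Sigma_2$ since that is the common domain of analyticity of $\Phi_1^{(1)}$ and $\Phi_2^{(2)}$ noted just after \eqref{til_scatcoeff}.

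For claims (3), I unwind the definitions $\tilde\Phi_j=\hat\Phi_j\eul^{Q_j}=D_j\Phi_j\eul^{Q_j}$ to get $D_1^{-1}\tilde\Phi_1^{(1)}=\Phi_1^{(1)}\eul^{p_1}$ and $D_2^{-1}\tilde\Phi_2^{(2)}=\Phi_2^{(2)}\eul^{-p_2}$; then the evenness of $p_j$ in $\lambda$ plus the two lines of Proposition~\ref{prop:sym_Phi_minus_notin_sigma_i} deliver \eqref{sym_minus_a} and \eqref{sym_minus_b} immediately, e.g.
\begin{equation*}
(D_1^{-1}\tilde\Phi_1^{(1)})(-\lambda)=\Phi_1^{(1)}(-\lambda)\eul^{p_1(-\lambda)}=-\sigma_3\Phi_1^{(1)}(\lambda)\eul^{p_1(\lambda)}=-\sigma_3(D_1^{-1}\tilde\Phi_1^{(1)})(\lambda).
\end{equation*}

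For claim (2), I combine the $D_j$-identity above with the symmetry of $\Phi_j^{(j)}$. For instance,
\begin{equation*}
\tilde\Phi_1^{(1)}(-\lambda)=D_1(-\lambda)\,\Phi_1^{(1)}(-\lambda)\,\eul^{p_1(-\lambda)}
=\bigl(-\sigma_3 D_1(\lambda)\sigma_3\bigr)\bigl(-\sigma_3\Phi_1^{(1)}(\lambda)\bigr)\eul^{p_1(\lambda)}
=\sigma_3\tilde\Phi_1^{(1)}(\lambda),
\end{equation*}
and analogously $\tilde\Phi_2^{(2)}(-\lambda)=-\sigma_3\tilde\Phi_2^{(2)}(\lambda)$, where the extra sign comes from the $+$ sign in \eqref{sym_Phi_minus_notSigma_2-b}. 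The only place that is slightly more than bookkeeping is the verification of the $D_j$-conjugation identity, so I would state and prove it as a separate short lemma (or a displayed computation) before carrying out the three arguments above.
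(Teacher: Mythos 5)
Your proof is correct and follows essentially the same route as the paper: part (1) by substituting \eqref{sym_Phi_minus_notSigma_2} into the determinant formula \eqref{scatcoeff-a}, and parts (2)--(3) from the conjugation identity $D_j(-\lambda)=-\sigma_3D_j(\lambda)\sigma_3$ together with $Q_j(-\lambda)=Q_j(\lambda)$ (the paper derives (3) from (2) while you go directly from $\Phi_j^{(j)}$, a trivial reordering). Two cosmetic slips that do not affect the result: in \eqref{D-j} the factor $\lambda A_j$ sits on the \emph{diagonal} of $D_j$, not off it, and the determinant count should read $\det(\sigma_3)\cdot(-1)=1$ rather than $\det(\sigma_3)^2\cdot(-1)^2$ (both equal $1$, so the conclusion stands).
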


\begin{proof} \begin{enumerate}
    \item Substitute \eqref{sym_Phi_minus_notSigma_2} into \eqref{scatcoeff-a}.
    
    \item Observe that due to \eqref{sym_k_i-a}, we have $D_j^{-1}(-\lambda)=-\sigma_3D_j^{-1}(\lambda)\sigma_3$ and $Q_j(-\lambda)=Q_j(\lambda)$. Combining this with \eqref{sym_Phi_minus_notSigma_2} and using the connection between $\Phi_j$ and $\tilde\Phi_j$, we obtain \eqref{sym_tildePhi_minus_notSigma_2}. 
    
    \item Combine $D_j^{-1}(-\lambda)=-\sigma_3D_j^{-1}(\lambda)\sigma_3$ and \eqref{sym_tildePhi_minus_notSigma_2}.
\end{enumerate}
\end{proof}

\begin{proposition}\label{prop:sym_Phi_minus_sigma_i} The following symmetry holds
    \begin{equation}\label{sym_Phi_minus_sigma_i}
    \Phi_j(\lambda_+)=-\sigma_3\Phi_j(-\lambda_+)\sigma_3, \qquad \lambda\in\dot\Sigma_j.
\end{equation}
\end{proposition}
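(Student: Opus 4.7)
The approach mirrors the one used in Proposition~2.2, namely uniqueness of the Jost solutions, but now carried out for the boundary values on the cut $\dot\Sigma_j$. Recall that the identities $\sigma_3 U(\lambda)\sigma_3 = U(-\lambda)$ and $\sigma_3 V(\lambda)\sigma_3 = V(-\lambda)$ already noted in the proof of Proposition~\ref{prop:sym_Phi_minus_notin_sigma_i} imply that if $\Psi(x,t,\mu)$ is any matrix solution of \eqref{Lax} at spectral value $\mu$, then $\sigma_3 \Psi(x,t,\mu)\sigma_3$ is a matrix solution at spectral value $-\mu$. Applying this with $\mu = -\lambda_+ = (-\lambda)_-$, the matrix $-\sigma_3 \Phi_j(x,t,-\lambda_+)\sigma_3$ is a solution of \eqref{Lax} at $\lambda_+$, precisely where $\Phi_j(x,t,\lambda_+)$ is defined.

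It remains to verify that these two solutions share the same asymptotic normalization as $x\to(-1)^j\infty$. Since $\widetilde\Phi_j\to I$ at this infinity, the normalization of $\Phi_j(\lambda)$ coincides with that of $\Phi_{0,j}(\lambda)=D_j^{-1}(\lambda)\eul^{-Q_j(\lambda)}$, so the task reduces to checking
\begin{equation*}
D_j^{-1}(\lambda_+)\,\eul^{-Q_j(\lambda_+)} \;=\; -\sigma_3\,D_j^{-1}((-\lambda)_-)\,\eul^{-Q_j((-\lambda)_-)}\,\sigma_3.
\end{equation*}
Two ingredients enter. First, since the two boundary values of $k_j$ across $\Sigma_j$ differ only by sign, combining this with \eqref{sym_k_i-b} gives $k_j((-\lambda)_-)=k_j(\lambda_+)$; by the form of $p_j$ in \eqref{p_i}, in which the factor multiplying $k_j$ depends on $\lambda$ only through $1/\lambda^2$, this implies $Q_j((-\lambda)_-)=Q_j(\lambda_+)$. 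Second, inspecting the explicit formula \eqref{D-j}, the scalar prefactor $\sqrt{\tfrac{1}{2}}\sqrt{\tfrac{1}{\ii A_j k_j(\cdot)}-1}$ and the off-diagonal entries (both equal to $-1$) take the same value at $\lambda_+$ and at $(-\lambda)_-$, whereas the diagonal entries, which are linear in the evaluation point through the factor $\lambda A_j$, acquire a sign. A direct $2\times 2$ computation then gives $-\sigma_3 D_j^{-1}((-\lambda)_-)\sigma_3 = D_j^{-1}(\lambda_+)$. Since $\sigma_3$ commutes with the diagonal matrix $\eul^{-Q_j}$, the desired equality of normalizations follows.

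With both sides of \eqref{sym_Phi_minus_sigma_i} being solutions of the same Lax pair \eqref{Lax} at $\lambda_+$ with the same asymptotics as $x\to(-1)^j\infty$, uniqueness of the solution of the Volterra integral equation \eqref{eq} (equivalently, \eqref{eq_phi}) forces them to be equal for all $x$ and $t$. The only real obstacle is the bookkeeping for the boundary values of $k_j$ and the branch of the scalar prefactor in $D_j^{-1}$, which must be tracked carefully so that the sign in $-\sigma_3 D_j^{-1}((-\lambda)_-)\sigma_3 = D_j^{-1}(\lambda_+)$ comes out consistent; once the symmetries \eqref{sym_k_i} are applied, this is a routine verification.
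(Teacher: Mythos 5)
Your proof is correct and follows essentially the same route as the paper's: use $\sigma_3 U(\lambda)\sigma_3=U(-\lambda)$, $\sigma_3 V(\lambda)\sigma_3=V(-\lambda)$ together with the single-valuedness of $U,V$ to see that $-\sigma_3\Phi_j(-\lambda_+)\sigma_3$ solves \eqref{Lax} at $\lambda_+$, then match the normalization as $x\to(-1)^j\infty$ via $k_j((-\lambda)_-)=k_j(\lambda_+)$ and $-\sigma_3 D_j^{-1}((-\lambda)_-)\sigma_3=D_j^{-1}(\lambda_+)$, and conclude by uniqueness of the Jost solution. You merely spell out the bookkeeping for $Q_j$ and $D_j^{-1}$ that the paper leaves implicit in the phrase ``comparing their asymptotic behaviour.''
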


\begin{proof}
Since $\sigma_3U(\lambda)\sigma_3\equiv U(-\lambda)$ and $\sigma_3V(\lambda)\sigma_3\equiv V(-\lambda)$ and $U$ and $V$ do not have jumps along $\Sigma_j$, it follows that if $\Phi_j(\lambda_+)$ solves \eqref{Lax}, so does $\sigma_3 \Phi_j(-\lambda_+)$. Comparing their asymptotic behaviour as $x\to(-1)^j\infty$ and using \eqref{sym_k_i-a}, the symmetry \eqref{sym_Phi_minus_sigma_i} follows.
\end{proof}

\begin{corollary} We have
\begin{enumerate}
        \item       \begin{equation}\label{sym_s_minus_sigma1} s(\lambda_+)=\sigma_3s(-\lambda_+)\sigma_3,\quad \lambda\in\dot\Sigma_1  
        \end{equation}
        \item         \begin{equation}\label{sym_tildePhi_minus_sigma_i}
        \tilde\Phi_j(\lambda_+)=\sigma_3\tilde\Phi_j(-\lambda_+)\sigma_3, \qquad \lambda\in\dot\Sigma_j.
        \end{equation}
        
        \item 
\begin{equation}\label{sym_minus} 
 (D_j^{-1}\tilde\Phi_j)((-\lambda)_-)=-\sigma_3(D_j^{-1}\tilde\Phi_j)(\lambda_+)\sigma_3, \quad \lambda_+\in\dot\Sigma_j.
 \end{equation}
\textcolor{red}{
}

\end{enumerate}
\end{corollary}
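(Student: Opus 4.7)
My plan is to derive all three identities from Proposition \ref{prop:sym_Phi_minus_sigma_i} together with elementary transformation laws for $D_j$ and $Q_j$ on $\dot\Sigma_j$ under $\lambda\mapsto-\lambda$. Before touching the three statements, I would establish two auxiliary facts for $\lambda_+\in\dot\Sigma_j$: first, $Q_j(-\lambda_+)=Q_j(\lambda_+)$; second, $D_j(-\lambda_+)=-\sigma_3 D_j(\lambda_+)\sigma_3$ (and hence $D_j^{-1}(-\lambda_+)=-\sigma_3 D_j^{-1}(\lambda_+)\sigma_3$). Both are consequences of the identity $k_j(-\lambda_+)=k_j(\lambda_+)$ on $\dot\Sigma_j$, which in turn follows by chaining \eqref{sym_k_i-b} with the cut-side relation $k_j(\mu_-)=-k_j(\mu_+)$ implied by \eqref{sym_k_i-c} and the reality \eqref{sym_k_i-d}. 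Once $k_j(-\lambda_+)=k_j(\lambda_+)$ is in hand, $Q_j$ is automatically invariant because $p_j$ depends on $\lambda$ only through $k_j(\lambda)$ and $\lambda^2$, while a direct inspection of \eqref{D-j} shows that only the diagonal entries flip sign, yielding the stated conjugation law.

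For part (1), the idea is to substitute Proposition \ref{prop:sym_Phi_minus_sigma_i} into \eqref{scat} in two ways: first, read $\Phi_1(\lambda_+)=\Phi_2(\lambda_+)s(\lambda_+)$ and replace the left-hand side $\Phi_1(\lambda_+)$ by $-\sigma_3\Phi_1(-\lambda_+)\sigma_3=-\sigma_3\Phi_2(-\lambda_+)s(-\lambda_+)\sigma_3$ using \eqref{scat} at $-\lambda_+\in\dot\Sigma_1$; second, replace $\Phi_2(\lambda_+)$ on the right-hand side by $-\sigma_3\Phi_2(-\lambda_+)\sigma_3$. Equating the two expressions and cancelling the (invertible, since $\det\Phi_2\equiv 1$) factor $-\sigma_3\Phi_2(-\lambda_+)$ on the left gives $s(-\lambda_+)\sigma_3=\sigma_3 s(\lambda_+)$, which is \eqref{sym_s_minus_sigma1}.

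For part (2), I would start from $\tilde\Phi_j(-\lambda_+)=D_j(-\lambda_+)\Phi_j(-\lambda_+)\eul^{Q_j(-\lambda_+)}$ and plug in the two auxiliary lemmas together with Proposition \ref{prop:sym_Phi_minus_sigma_i}. Since $Q_j=p_j\sigma_3$ is diagonal, $\eul^{Q_j}$ commutes with $\sigma_3$; the two factors of $-\sigma_3$ coming from $D_j$ and from $\Phi_j$ collapse via $\sigma_3^2=I$ while the remaining $\sigma_3$'s rearrange to the outside, yielding $\tilde\Phi_j(-\lambda_+)=\sigma_3\tilde\Phi_j(\lambda_+)\sigma_3$, equivalent to \eqref{sym_tildePhi_minus_sigma_i}. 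For part (3), I would simply pre-multiply the identity of part (2) by $D_j^{-1}(-\lambda_+)=-\sigma_3 D_j^{-1}(\lambda_+)\sigma_3$: the overall minus sign is preserved and the inner $\sigma_3$'s telescope, giving \eqref{sym_minus}.

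The main obstacle I foresee is the bookkeeping of branch values on the cut: one has to check that the scalar prefactor $\sqrt{\frac{1}{\ii A_j k_j(\lambda)}-1}$ in $D_j$ takes \emph{identical} values at $\lambda_+$ and at $-\lambda_+=(-\lambda)_-$ under the chosen branch, because only then does this prefactor cancel out in the matrix identity $D_j(-\lambda_+)=-\sigma_3 D_j(\lambda_+)\sigma_3$. As soon as $k_j(-\lambda_+)=k_j(\lambda_+)$ is secured this is automatic, but it relies delicately on the simultaneous consistency of the cut prescription for $k_j$ and the prescription for the outer square root.
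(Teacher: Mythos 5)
Your proposal is correct and follows essentially the same route as the paper: substituting Proposition \ref{prop:sym_Phi_minus_sigma_i} into the scattering relation \eqref{scat} for part (1), and using the conjugation laws $D_j^{-1}(-\lambda_+)=-\sigma_3 D_j^{-1}(\lambda_+)\sigma_3$ and $Q_j(-\lambda_+)=Q_j(\lambda_+)$ together with the relation $\tilde\Phi_j=D_j\Phi_j\eul^{Q_j}$ for parts (2) and (3). The only cosmetic difference is that you derive $k_j(-\lambda_+)=k_j(\lambda_+)$ by chaining \eqref{sym_k_i-b}--\eqref{sym_k_i-d}, while the paper reads it off directly as the boundary-value version of \eqref{sym_k_i-a}; both give the same fact.
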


\begin{proof} \begin{enumerate}
    \item Substitute \eqref{sym_Phi_minus_sigma_i} into \eqref{scat}.
    
    \item Observe that due to \eqref{sym_k_i-a}, we have $D_j^{-1}(-\lambda_+)=-\sigma_3D_j^{-1}(\lambda_+)\sigma_3$ and $Q_j(-\lambda_+)=Q_j(\lambda_+)$. Combining this with \eqref{sym_Phi_minus_sigma_i} and using the connection between $\Phi_j$ and $\tilde\Phi_j$, we obtain \eqref{sym_tildePhi_minus_sigma_i}.
    
    \item Combine $D_j^{-1}(-\lambda_+)=-\sigma_3D_j^{-1}(\lambda_+)\sigma_3$ and \eqref{sym_tildePhi_minus_sigma_i}.
\end{enumerate}
\end{proof}

\textit{Second symmetry: $\lambda \longleftrightarrow -\overline\lambda$.}

\begin{proposition}\label{prop:sym-Phi-(minus)} The following symmetry holds
\begin{equation}\label{sym_Phi_(minus)}
        \Phi_j(\lambda_+)=\sigma_3\Phi_j((-\lambda)_+)\sigma_2, \qquad \lambda\in\dot\Sigma_j.
\end{equation}
\end{proposition}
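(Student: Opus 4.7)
The plan mirrors the proof of Proposition~\ref{prop:sym_Phi_minus_sigma_i}. Set $\Psi(x,t,\lambda_+) := \sigma_3 \Phi_j(x,t,(-\lambda)_+) \sigma_2$ for $\lambda \in \dot\Sigma_j$. Because $\sigma_3 U(\lambda)\sigma_3 \equiv U(-\lambda)$, $\sigma_3 V(\lambda)\sigma_3 \equiv V(-\lambda)$ and $U$, $V$ have no jumps across $\Sigma_j$, the function $\sigma_3 \Phi_j((-\lambda)_+)$ satisfies the Lax pair \eqref{Lax} at spectral parameter $\lambda_+$; multiplying by the constant matrix $\sigma_2$ from the right preserves this, so $\Psi$ is a matrix solution of \eqref{Lax} at $\lambda_+$.

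Next I would compare $\Psi$ and $\Phi_j(\lambda_+)$ asymptotically as $x\to(-1)^j\infty$. Since $\Phi_j = D_j^{-1}\tilde\Phi_j\eul^{-Q_j}$ with $\tilde\Phi_j\to I$, the asymptotics read
\[
\Phi_j(\lambda_+) \sim D_j^{-1}(\lambda_+)\eul^{-Q_j(\lambda_+)},\qquad
\Psi \sim \sigma_3 D_j^{-1}((-\lambda)_+)\eul^{-Q_j((-\lambda)_+)}\sigma_2.
\]
By \eqref{sym_k_i-b}, $k_j((-\lambda)_+) = -k_j(\lambda_+)$, so $Q_j((-\lambda)_+) = -Q_j(\lambda_+)$, and the anticommutation $\sigma_2\sigma_3 = -\sigma_3\sigma_2$ yields $\eul^{-Q_j((-\lambda)_+)}\sigma_2 = \sigma_2\eul^{-Q_j(\lambda_+)}$. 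The asymptotic match thus reduces to the purely algebraic identity $\sigma_3 D_j^{-1}((-\lambda)_+)\sigma_2 = D_j^{-1}(\lambda_+)$, which I would verify by direct substitution into the explicit formula \eqref{D-j}. The key algebraic fact is that $\frac{\lambda A_j}{1-\ii A_j k_j(\lambda_+)}$ and $\frac{-\lambda A_j}{1+\ii A_j k_j(\lambda_+)}$ are negative reciprocals, a consequence of $(1+\ii A_j k_j)(1-\ii A_j k_j) = A_j^2\lambda^2$, itself equivalent to the definition $k_j^2 = \lambda^2 - 1/A_j^2$. The operation $\sigma_3(\cdot)\sigma_2$ swaps the matrix entries precisely in the way required by this reciprocity.

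Once the Lax equations and the asymptotics match, the unique solvability of the Volterra integral equation \eqref{eq_phi} (at $\lambda_+$, with ``source'' $\Phi_{0,j}(\lambda_+)$) gives $\Psi \equiv \Phi_j(\lambda_+)$, which is the claimed symmetry. Equivalently, one can insert $\sigma_2\sigma_2 = I$ and $\sigma_3\sigma_3 = I$ in the integrand of \eqref{eq_phi} at $(-\lambda)_+$ and verify directly that $\Psi$ obeys \eqref{eq_phi} at $\lambda_+$.

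The main technical obstacle is the careful bookkeeping of the branch of the prefactor $\sqrt{1/2}\sqrt{1/(\ii A_j k_j(\lambda)) - 1}$ in $D_j^{-1}$ under $\lambda_+\mapsto(-\lambda)_+$, since $k_j$ simultaneously changes sign across $\Sigma_j$. The ratio of the two prefactors must come out with a specific sign in order for the $D_j^{-1}$ identity to close; this is the same kind of calculation implicitly carried out in the proof of Proposition~\ref{prop:sym_Phi_minus_sigma_i}, so the analysis is expected to be routine once the branch choices fixed after \eqref{D-j} are tracked consistently.
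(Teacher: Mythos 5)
Your proposal is correct and follows essentially the same route as the paper: one checks that $\sigma_3\Phi_j((-\lambda)_+)$ solves the Lax pair at $\lambda_+$ (because $U,V$ are single-valued and $\sigma_3U(\lambda)\sigma_3=U(-\lambda)$), and then matches the asymptotics as $x\to(-1)^j\infty$ using $k_j((-\lambda)_+)=-k_j(\lambda_+)$ together with the branch identity for the prefactor of $D_j^{-1}$, which is exactly the relation $\sqrt{\tfrac{1}{\ii A_jk_j(\lambda_+)}-1}\,\sqrt{-\tfrac{1}{\ii A_jk_j(\lambda_+)}-1}=-\tfrac{\lambda_+}{k_j(\lambda_+)}$ invoked in the paper's proof. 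The reciprocity $\frac{\lambda A_j}{1-\ii A_jk_j}\cdot\frac{-\lambda A_j}{1+\ii A_jk_j}=-1$ and the resulting identity $\sigma_3D_j^{-1}((-\lambda)_+)\sigma_2=D_j^{-1}(\lambda_+)$ that you isolate are precisely what the paper uses (it appears there as $D_j(\lambda_+)\sigma_3D_j^{-1}((-\lambda)_+)=\sigma_2$), so the argument closes as you describe.
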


\begin{proof}
Since $U$ and $V$ are single valued functions of $\lambda$, we have  $\sigma_3U(\lambda_+)\sigma_3\equiv U((-\lambda)_+)$ and $\sigma_3V(\lambda_+)\sigma_3\equiv V((-\lambda)_+)$ for $\lambda\in\Sigma_j$. Hence, if $\Phi_j(\lambda_+)$ solves \eqref{Lax}, so does $\sigma_3 \Phi_j((-\lambda_+)$. Comparing their asymptotic behaviour as $x\to(-1)^j\infty$ and using \eqref{sym_k_i-b} and the equality $\sqrt{\frac{1}{\ii A_jk_j(\lambda_+)}-1}\sqrt{-\frac{1}{\ii A_jk_j(\lambda_+)}-1}=-\frac{\lambda_+}{k_j(\lambda_+)}$ for $\lambda_+\in\dot\Sigma_j$, the symmetry \eqref{sym_Phi_(minus)} follows.
\end{proof}


\begin{corollary} We have
\begin{enumerate}
    \item 
 \begin{equation}\label{sym_s_(minus)_sigma1}
     s(\lambda_+)=\sigma_2s((-\lambda)_+)\sigma_2,\qquad\lambda\in\dot\Sigma_1.
 \end{equation}
    \item 
    \begin{equation}\label{sym_s_(minus)_sigma1_2}
    s(\lambda_+)=\sigma_1s(\lambda_-)\sigma_1,\qquad \lambda\in \dot\Sigma_1.   
    \end{equation}

\item 
\begin{equation}\label{sym_tildePhi_(minus)_sigma_i}
    \tilde\Phi_j(\lambda_+)=\sigma_2\tilde\Phi_j((-\lambda)_+)\sigma_2, \qquad \lambda\in\dot\Sigma_j.
\end{equation}

\item 
 \begin{equation}\label{sym_(minus)} 
 (D_j^{-1}\tilde\Phi_j)((-\lambda)_+)=\sigma_3(D_j^{-1}\tilde\Phi_j)(\lambda_+)\sigma_2, \qquad \lambda\in\dot\Sigma_j.
\end{equation} 
\end{enumerate}
\end{corollary}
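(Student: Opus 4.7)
The plan is to follow the template of the two preceding corollaries: parts (1) and (2) are purely algebraic consequences of the eigenfunction symmetry \eqref{sym_Phi_(minus)} combined with the scattering relation \eqref{scat}, while parts (3) and (4) propagate the $\Phi_j$-symmetry through the identity $\tilde\Phi_j = D_j\Phi_j \eul^{Q_j}$ using the behaviour of $D_j$ and $Q_j$ under $\lambda_+ \mapsto (-\lambda)_+$.

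For (1), I substitute \eqref{sym_Phi_(minus)} into both sides of $\Phi_1(\lambda_+) = \Phi_2(\lambda_+)s(\lambda_+)$ to obtain $\Phi_1((-\lambda)_+)\sigma_2 = \Phi_2((-\lambda)_+)\sigma_2\, s(\lambda_+)$; matching with the scattering relation at $(-\lambda)_+$ yields $s((-\lambda)_+)=\sigma_2 s(\lambda_+)\sigma_2$, which is (1) since $\sigma_2^2=I$. Part (2) then follows by composing (1) with \eqref{sym_s_minus_sigma1}, written as $s(\lambda_+)=\sigma_3 s((-\lambda)_-)\sigma_3$ via the notational identity $-\lambda_+=(-\lambda)_-$. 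Equating the two expressions for $s(\lambda_+)$ gives $\sigma_2 s((-\lambda)_+)\sigma_2=\sigma_3 s((-\lambda)_-)\sigma_3$, and conjugating by $\sigma_2$ together with the Pauli identities $\sigma_2\sigma_3=\ii\sigma_1$, $\sigma_3\sigma_2=-\ii\sigma_1$ produces $s((-\lambda)_+)=\sigma_1 s((-\lambda)_-)\sigma_1$; relabelling $-\lambda\mapsto\lambda$ gives (2).

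For (4), I use $D_j^{-1}\tilde\Phi_j=\Phi_j \eul^{Q_j}$, apply \eqref{sym_Phi_(minus)}, and invoke $Q_j(\lambda_+)=-Q_j((-\lambda)_+)$, a direct consequence of \eqref{sym_k_i-b} and the linear dependence of $p_j$ on $k_j$ in \eqref{p_i}. Combined with $\sigma_2\sigma_3\sigma_2=-\sigma_3$, which yields $\eul^{Q_j(\lambda_+)}\sigma_2=\sigma_2\eul^{Q_j((-\lambda)_+)}$, a short Pauli-matrix calculation gives (4). Finally, (3) follows from (4) by multiplying on the left by $D_j((-\lambda)_+)$ on the LHS and $D_j(\lambda_+)$ on the RHS and invoking the identity $D_j((-\lambda)_+)=\sigma_2 D_j(\lambda_+)\sigma_3$, which is verified directly from \eqref{D-j}: writing $\alpha=\lambda_+ A_j/(1-\ii A_j k_j(\lambda_+))$ and $\alpha'$ for the analogous quantity at $(-\lambda)_+$, one checks that $\alpha\alpha'=-1$ (using $A_j^2 k_j^2+1=A_j^2\lambda^2$) and that the outer factors match thanks to the same square-root identity $\sqrt{\tfrac{1}{\ii A_j k_j(\lambda_+)}-1}\sqrt{-\tfrac{1}{\ii A_j k_j(\lambda_+)}-1}=-\lambda_+/k_j(\lambda_+)$ already used in the proof of Proposition \ref{prop:sym-Phi-(minus)}.

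The main obstacle will be the careful bookkeeping of the square-root branches in $D_j$ on the sides of the cut $\Sigma_j$. In the first symmetry the corresponding identity $D_j^{-1}(-\lambda)=-\sigma_3 D_j^{-1}(\lambda)\sigma_3$ followed immediately from $k_j(-\lambda)=k_j(\lambda)$ off the cut, but here the sign of $k_j$ flips under $\lambda_+\mapsto(-\lambda)_+$, so the outer factor $\sqrt{\tfrac{1}{\ii A_j k_j(\lambda)}-1}$ jumps nontrivially and its sign on each side of the cut must be tracked so that the target identity $D_j((-\lambda)_+)=\sigma_2 D_j(\lambda_+)\sigma_3$ holds with the correct overall sign. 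Apart from this verification, every other step reduces to elementary Pauli-matrix algebra.
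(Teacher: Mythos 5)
Your proposal is correct and follows essentially the same route as the paper: parts (1)–(2) by substituting the eigenfunction symmetry \eqref{sym_Phi_(minus)} into \eqref{scat} and combining with \eqref{sym_s_minus_sigma1}, and parts (3)–(4) via the identity $D_j(\lambda_+)\sigma_3D_j^{-1}((-\lambda)_+)=\sigma_2$ (your form $D_j((-\lambda)_+)=\sigma_2 D_j(\lambda_+)\sigma_3$ is equivalent) together with $Q_j((-\lambda)_+)=-Q_j(\lambda_+)$. The only cosmetic difference is that you establish (4) first and deduce (3) from it, whereas the paper proves (3) directly and then obtains (4); the branch bookkeeping you flag is handled by the same square-root identity the paper invokes.
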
 

\begin{proof}
\begin{enumerate}
    \item Substitute \eqref{sym_Phi_(minus)} into \eqref{scat}.
    
    \item Combine \eqref{sym_s_(minus)_sigma1} with \eqref{sym_s_minus_sigma1}.
    
    \item Observe that $k_j(\lambda_+)\in\mathbb{R}$ and that due to \eqref{sym_k_i-b} and $\sqrt{\frac{1}{\ii A_jk_j(\lambda_+)}-1}\sqrt{-\frac{1}{\ii A_jk_j(\lambda_+)}-1}=-\frac{\lambda_+}{k_j(\lambda_+)}$, we have $D_j(\lambda_+)\sigma_3D_j^{-1}((-\lambda)_+)=\sigma_2$ and $Q_j((-\lambda)_+)=-Q_j(\lambda_+)$ for $\lambda\in\dot\Sigma_j$. Combining this with \eqref{sym_Phi_(minus)} and using the connection between $\Phi_j$ and $\tilde\Phi_j$, we obtain \eqref{sym_tildePhi_(minus)_sigma_i}.
    
    \item Combine $D_j(\lambda_+)\sigma_3D_j^{-1}((-\lambda)_+)=\sigma_2$ and \eqref{sym_tildePhi_(minus)_sigma_i}.
    
\end{enumerate}
\end{proof}

\textit{Third symmetry: $\lambda \longleftrightarrow \overline\lambda$.} 

\begin{proposition}\label{prop:sym-Phi-barbar_notin_sigma_i} The following symmetries hold
     \begin{equation}\label{sym_phi_barbar-notin_Sigma_i}
     \overline{\Phi_j^{(j)}(\overline{\lambda})}=-\Phi_j^{(j)}(\lambda), \quad \lambda\in\mathbb{C}\setminus\Sigma_j.    
     \end{equation}
\end{proposition}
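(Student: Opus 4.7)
The plan is to mimic the structure of the three preceding symmetry propositions. First, I would observe that since $u$, $u_x$, and $m$ are real-valued and the Lax coefficients $U(\lambda)$, $V(\lambda)$ in \eqref{U}--\eqref{V} are rational in $\lambda$ with real coefficients, one has $\overline{U(\overline{\lambda})}=U(\lambda)$ and $\overline{V(\overline{\lambda})}=V(\lambda)$. Consequently, if $\Phi(\lambda)$ solves \eqref{Lax}, then conjugating the equations for $\Phi(\overline{\lambda})$ yields that $\overline{\Phi(\overline{\lambda})}$ solves the same Lax pair at parameter $\lambda$. In particular $\overline{\Phi_j^{(j)}(\overline{\lambda})}$ satisfies \eqref{Lax} for $\lambda\in\mathbb{C}\setminus\Sigma_j$, where the $j$-th column is analytic.

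Next, I would match asymptotic behaviour as $x\to(-1)^j\infty$. By construction, $\Phi_j^{(j)}(x,t,\lambda)$ is asymptotic to the $j$-th column of the background solution $\Phi_{0,j}(x,t,\lambda)=D_j^{-1}(\lambda)\eul^{-Q_j(x,t,\lambda)}$ introduced in Subsection~\ref{sec:Background-solution}. Using \eqref{sym_k_i-c}, namely $\overline{k_j(\overline{\lambda})}=-k_j(\lambda)$ for $\lambda\in\mathbb{C}\setminus\Sigma_j$, a short computation from \eqref{p_i} gives $\overline{p_j(x,t,\overline{\lambda})}=p_j(x,t,\lambda)$ (the $\ii k_j$ factor contributes one sign flip under conjugation and a second through $\overline{k_j(\overline{\lambda})}=-k_j(\lambda)$, while the bracketed expression is manifestly real apart from the $\tfrac{1}{\lambda^2}$ term which is conjugation-covariant). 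Hence $\overline{\eul^{-Q_j(\overline{\lambda})}}=\eul^{-Q_j(\lambda)}$. For the dressing matrix, the diagonal entries $\lambda A_j/(1-\ii A_j k_j(\lambda))$ of $D_j^{-1}$ are invariant under $\lambda\mapsto\overline\lambda$ followed by conjugation, since $\overline{1-\ii A_j k_j(\overline\lambda)}=1-\ii A_j k_j(\lambda)$; the scalar prefactor $\sqrt{1/2}\sqrt{(\ii A_j k_j(\lambda))^{-1}-1}$, however, picks up a minus sign through the branch convention $\overline{\sqrt{w}}=-\sqrt{\overline{w}}$ fixed in the proof of \eqref{hatPhi-Phi}. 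This gives $\overline{D_j^{-1}(\overline{\lambda})}=-D_j^{-1}(\lambda)$, and therefore $\overline{\Phi_{0,j}(x,t,\overline{\lambda})}=-\Phi_{0,j}(x,t,\lambda)$. So $\overline{\Phi_j^{(j)}(\overline{\lambda})}$ and $-\Phi_j^{(j)}(\lambda)$ are two solutions of \eqref{Lax} with matching asymptotics as $x\to(-1)^j\infty$, and by the usual uniqueness argument for Jost columns (as invoked in the previous proofs) they coincide.

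The step I expect to require the most care is the sign identity $\overline{D_j^{-1}(\overline{\lambda})}=-D_j^{-1}(\lambda)$: one must track two independent sign flips, one coming from $\overline{k_j(\overline{\lambda})}=-k_j(\lambda)$ and one from the square-root branch convention, and verify that the first is absorbed by the matrix entries of $D_j^{-1}$ and by the $\ii$ inside $(\ii A_j k_j)^{-1}$, while the second survives in the scalar prefactor and produces the overall minus sign. Apart from this bookkeeping, the argument is structurally identical to Propositions~\ref{prop:sym_Phi_minus_notin_sigma_i}, \ref{prop:sym_Phi_minus_sigma_i}, and~\ref{prop:sym-Phi-(minus)}.
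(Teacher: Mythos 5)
Your proposal is correct and follows essentially the same route as the paper: conjugation symmetry of $U$ and $V$, then matching asymptotics as $x\to(-1)^j\infty$ via \eqref{sym_k_i-c} and the identity $\overline{\sqrt{\tfrac{1}{\ii A_j k_j(\overline{\lambda})}-1}}=-\sqrt{\tfrac{1}{\ii A_j k_j(\lambda)}-1}$, which is precisely the source of the overall minus sign the paper cites. Your detailed bookkeeping of the two sign flips (one from $\overline{k_j(\overline{\lambda})}=-k_j(\lambda)$, absorbed by the matrix entries of $D_j^{-1}$, and one from the branch convention $\overline{\sqrt{w}}=-\sqrt{\overline{w}}$, surviving in the prefactor) correctly fills in what the paper leaves implicit.
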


\begin{proof} Since $\overline{U(\overline\lambda)}\equiv U(\lambda)$ and $\overline{V(\overline\lambda)}\equiv V(\lambda)$, it follows that  $\overline{\Phi_j^{(j)}(\overline\lambda)}$ solves \eqref{Lax-x} together with $\Phi_j^{(j)}(\lambda)$. Hence, comparing their asymptotic behaviour as $x\to (-1)^j\infty$ and using \eqref{sym_k_i-c} and the equality $\overline{\sqrt{\frac{1}{\ii A_j k_j(\overline{\lambda})}-1}}=-\sqrt{\frac{1}{\ii A_j k_j(\lambda)}-1}$, we obtain the symmetries \eqref{sym_phi_barbar-notin_Sigma_i}.
\end{proof}

\begin{corollary}
We have
\begin{enumerate}
    \item 
    \begin{equation}\label{sym_s11_barbar}
    \overline{s_{11}(\overline{\lambda})}=s_{11}(\lambda),\quad\lambda\in\mathbb{C}\setminus\Sigma_2.  
    \end{equation}
    
    \item 

    \begin{equation}\label{sym_tildephi_barbar_notSigma_i}
    \overline{\tilde\Phi_j^{(j)}(\overline{\lambda})}=\tilde\Phi_j^{(j)}(\lambda),\quad \lambda\in\mathbb{C}\setminus\Sigma_j.    
    \end{equation}

    \item 
    \begin{equation}\label{sym_barbar_notin_Sigma_i}
    \overline{(D_j^{-1}\tilde\Phi_j^{(j)})(\overline{\lambda})}=-(D_j^{-1}\tilde\Phi_j^{(j)})(\lambda), \quad \lambda\in\mathbb{C}\setminus\Sigma_j.
\end{equation}
\end{enumerate}
\end{corollary}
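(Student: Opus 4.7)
My plan is to derive all three identities as direct consequences of the preceding Proposition \ref{prop:sym-Phi-barbar_notin_sigma_i} together with the conjugation behavior of the scalar quantities $k_j$, $Q_j$, and $D_j$.

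For part (1), I would simply substitute the symmetry $\overline{\Phi_j^{(j)}(\overline{\lambda})}=-\Phi_j^{(j)}(\lambda)$ into the determinantal formula \eqref{scatcoeff-a}, using that complex conjugation commutes with the determinant of a $2\times 2$ matrix. The two sign flips coming from $\Phi_1^{(1)}$ and $\Phi_2^{(2)}$ cancel, giving $\overline{s_{11}(\overline\lambda)}=s_{11}(\lambda)$, valid wherever both columns are defined, namely on $\mathbb{C}\setminus\Sigma_2$.

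For parts (2) and (3), the main computation is to determine how $D_j$ and $Q_j$ transform under $\lambda\mapsto\overline\lambda$ followed by complex conjugation. Using \eqref{sym_k_i-c} in the form $\overline{k_j(\overline\lambda)}=-k_j(\lambda)$, I would first check that $\overline{1/(\ii A_j k_j(\overline\lambda))-1}=1/(\ii A_j k_j(\lambda))-1$, so that the convention $\overline{\sqrt{w}}=-\sqrt{\overline w}$ stated for the prefactor in $D_j$ yields $\overline{\sqrt{1/(\ii A_j k_j(\overline\lambda))-1}}=-\sqrt{1/(\ii A_j k_j(\lambda))-1}$. The off-diagonal entries of $D_j$ are real constants, while for the diagonal entries a short calculation gives $\overline{\overline\lambda A_j/(1-\ii A_j k_j(\overline\lambda))}=\lambda A_j/(1-\ii A_j k_j(\lambda))$. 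Combining these, I obtain $\overline{D_j(\overline\lambda)}=-D_j(\lambda)$, and hence $\overline{D_j^{-1}(\overline\lambda)}=-D_j^{-1}(\lambda)$. Similarly, writing $p_j=\ii A_j k_j(\lambda)R(x,t,\lambda)$ with $R$ a real-coefficient rational function of $\lambda$ satisfying $\overline{R(\overline\lambda)}=R(\lambda)$, the factor of $\ii$ and the sign from $k_j$ combine to give $\overline{Q_j(\overline\lambda)}=Q_j(\lambda)$.

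With these scalar identities in hand, part (3) follows immediately: since $D_j^{-1}\widetilde\Phi_j=\Phi_j\eul^{Q_j}$, the $j$-th column reads $(D_j^{-1}\widetilde\Phi_j)^{(j)}=\Phi_j^{(j)}\eul^{(-1)^{j+1}p_j}$, and conjugating at $\overline\lambda$ yields $\overline{\Phi_j^{(j)}(\overline\lambda)}\,\eul^{(-1)^{j+1}p_j(\lambda)}=-(D_j^{-1}\widetilde\Phi_j)^{(j)}(\lambda)$ by Proposition \ref{prop:sym-Phi-barbar_notin_sigma_i}. For part (2), I multiply by $D_j(\lambda)$ on the left and use $\overline{D_j(\overline\lambda)}=-D_j(\lambda)$; the two minus signs cancel and \eqref{sym_tildephi_barbar_notSigma_i} results.

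The only nontrivial step is tracking the sign conventions of the square root defining $D_j$ and verifying that $\overline{Q_j(\overline\lambda)}=Q_j(\lambda)$ despite $Q_j$ not being real-analytic; once those two algebraic identities are established, the three corollary statements are essentially immediate.
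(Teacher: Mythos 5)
Your proposal is correct and follows essentially the same route as the paper: part (1) by substitution into \eqref{scatcoeff-a}, and parts (2)--(3) by establishing the two scalar identities $\overline{D_j^{-1}(\overline{\lambda})}=-D_j^{-1}(\lambda)$ and $\overline{Q_j(\overline{\lambda})}=Q_j(\lambda)$ from \eqref{sym_k_i-c} and the square-root convention, then combining with Proposition \ref{prop:sym-Phi-barbar_notin_sigma_i}. The only (immaterial) difference is that you derive (3) first and obtain (2) by left-multiplying by $D_j(\lambda)$, whereas the paper derives (2) first and then (3); your verification of the sign bookkeeping for the prefactor and the diagonal entries of $D_j$ is accurate.
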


\begin{proof} \begin{enumerate}
    \item Substitute \eqref{sym_phi_barbar-notin_Sigma_i} into \eqref{scatcoeff-a}.
    
    \item Observe that due to \eqref{sym_k_i-c} and $\overline{\sqrt{\frac{1}{\ii A_jk_j(\overline{\lambda})}-1}}=-\sqrt{\frac{1}{\ii A_jk_j(\lambda)}-1}$, we have $\overline{D_j^{-1}(\overline{\lambda})}=- D_j^{-1}(\lambda)$ and $\overline{Q_j(\overline{\lambda})}=Q_j(\lambda)$. Hence combining this with \eqref{sym_phi_barbar-notin_Sigma_i} and using the connection between $\Phi_j$ and $\tilde\Phi_j$, we obtain \eqref{sym_tildephi_barbar_notSigma_i}. 
    
    \item Combine $\overline{D_j^{-1}(\overline{\lambda})}=- D_j^{-1}(\lambda)$ and \eqref{sym_tildephi_barbar_notSigma_i}.
\end{enumerate}
\end{proof}

\begin{proposition}\label{prop:sym-Phi-barbar_Sigma_i} The following symmetry holds
\begin{equation}\label{sym_Phi_barbar_Sigma_i}
    \overline{\Phi_j(\overline{\lambda_+})}=-\Phi_j(\lambda_+),  \qquad \lambda\in\dot\Sigma_j.
\end{equation}
\end{proposition}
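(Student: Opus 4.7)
The plan is to follow the same template used in Proposition~\ref{prop:sym-Phi-barbar_notin_sigma_i}: produce a second solution of the Lax pair by complex conjugation of the spectral parameter, and then pin down the relative constant by matching the asymptotics at $x\to(-1)^j\infty$. The new feature relative to that earlier proposition is that $\lambda_+$ lies on the branch cut $\dot\Sigma_j$, so $\overline{\lambda_+}=\lambda_-$, and one must work with limiting values from both sides of the cut.

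First I would observe that $U$ and $V$ are rational in $\lambda$ with coefficients that are real in $(x,t)$ (since $u$ and $m$ are real), hence $\overline{U(\bar\lambda)}\equiv U(\lambda)$ and $\overline{V(\bar\lambda)}\equiv V(\lambda)$ pointwise in $\mathbb{C}$; this identity needs no branch-cut care because $U$ and $V$ are themselves single-valued. It follows that $\overline{\Phi_j(\overline{\lambda_+})}=\overline{\Phi_j(\lambda_-)}$ solves \eqref{Lax} at $\lambda_+$, just like $\Phi_j(\lambda_+)$. Next I would check the asymptotics: as $x\to(-1)^j\infty$ both solutions are asymptotic to the background solution $\Phi_{0,j}(\lambda)=D_j^{-1}(\lambda)\eul^{-Q_j(x,t,\lambda)}$, so the target reduces to verifying $\overline{\Phi_{0,j}(\lambda_-)}=-\Phi_{0,j}(\lambda_+)$ on $\dot\Sigma_j$. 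For the exponential factor, \eqref{sym_k_i-d} gives $k_j(\lambda_+)\in\mathbb{R}$, while the standard jump across the cut gives $k_j(\lambda_-)=-k_j(\lambda_+)$. Since $p_j$ contains the factor $\ii A_j k_j$ multiplied by a real quantity, $Q_j(\lambda_+)$ is purely imaginary and $Q_j(\lambda_-)=-Q_j(\lambda_+)$, from which $\overline{\eul^{-Q_j(\lambda_-)}}=\eul^{-Q_j(\lambda_+)}$. For the prefactor I would write $D_j^{-1}(\lambda)$ in terms of $w_j(\lambda):=\frac{1}{\ii A_j k_j(\lambda)}-1$ and observe that $\overline{w_j(\lambda_-)}=w_j(\lambda_+)$ and $\overline{\frac{\lambda A_j}{1-\ii A_j k_j(\lambda_-)}}=\frac{\lambda A_j}{1-\ii A_j k_j(\lambda_+)}$ (because the denominators on the two sides of the cut are complex conjugates when $k_j(\lambda_+)$ is real).

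The main obstacle is accounting correctly for the sign on the right-hand side. This is where the square-root convention $\overline{\sqrt w}=-\sqrt{\bar w}$ stated with the definition of $D_j$ is essential: it forces $\overline{\sqrt{w_j(\lambda_-)}}=-\sqrt{w_j(\lambda_+)}$, and this is the sole source of the ``$-$'' in $\overline{\Phi_j(\overline{\lambda_+})}=-\Phi_j(\lambda_+)$. Once the sign is in place, uniqueness of the solution of the Volterra integral equation \eqref{eq} with the prescribed asymptotics concludes the argument, in complete parallel with the proof of Proposition~\ref{prop:sym-Phi-barbar_notin_sigma_i}.
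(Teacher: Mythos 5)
Your proposal is correct and follows essentially the same route as the paper: conjugating the single-valued, real-coefficient matrices $U,V$ to produce the second solution $\overline{\Phi_j(\overline{\lambda_+})}$ of \eqref{Lax}, then matching asymptotics as $x\to(-1)^j\infty$ via the background $\Phi_{0,j}=D_j^{-1}\eul^{-Q_j}$, with the minus sign coming from the convention $\overline{\sqrt{w_j}}=-\sqrt{\overline{w_j}}$ (equivalently, the identity $\overline{\sqrt{\tfrac{1}{\ii A_jk_j(\overline\lambda)}-1}}=-\sqrt{\tfrac{1}{\ii A_jk_j(\lambda)}-1}$ the paper cites). Your computation of $\overline{D_j^{-1}(\lambda_-)}=-D_j^{-1}(\lambda_+)$ and $\overline{Q_j(\lambda_-)}=Q_j(\lambda_+)$ is exactly what the paper uses in the accompanying corollary.
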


\begin{proof}
As above, since $\overline{U(\overline\lambda)}\equiv U(\lambda)$ and $\overline{V(\overline\lambda)}\equiv V(\lambda)$ and $U$ and $V$ have no jumps along $\Sigma_j$, we have $\overline{U(\lambda_-)}\equiv U(\lambda_+)$ and $\overline{V(\lambda_-)}\equiv V(\lambda_+)$. It follows that  if $\Phi_j(\lambda_+)$ solves \eqref{Lax}, so does $\overline{ \Phi_j(\overline\lambda_+)}$. Comparing their asymptotic behaviour as $x\to(-1)^j\infty$ and using \eqref{sym_k_i-c} and the fact that $\overline{\sqrt{\frac{1}{\ii A_jk_j(\overline{\lambda})}-1}}=-\sqrt{\frac{1}{\ii A_jk_j(\lambda)}-1}$, the symmetry \eqref{sym_Phi_barbar_Sigma_i} follows.
\end{proof}

\begin{corollary} We have
\begin{enumerate}
    \item 
    \begin{equation}\label{sym_s_barbar}
        \overline{s(\overline{\lambda_+})}=s(\lambda_+),\quad \lambda\in\dot\Sigma_1.
    \end{equation}
    
    \item 
    \begin{equation}\label{sym_tildePhi_barbar_Sigma_i}
    \overline{\tilde\Phi_j(\overline{\lambda_+})}=\tilde\Phi_j(\lambda_+), \qquad \lambda\in\dot\Sigma_j.
    \end{equation}
    
    \item 
\begin{equation}\label{sym_bar_Sigma_i}  \overline{(D_j^{-1}\tilde\Phi_j)(\overline{\lambda_+})}=-(D_j^{-1}\tilde\Phi_j)(\lambda_+), \quad \lambda\in\dot\Sigma_j.  
\end{equation}
\end{enumerate}
\end{corollary}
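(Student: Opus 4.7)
The plan is to mirror the proof of the analogous off-cut corollary after Proposition~\ref{prop:sym-Phi-barbar_notin_sigma_i}, now working with boundary values on the cut $\dot\Sigma_j$. The three parts follow in order, each reducing to the previous symmetry (Proposition~\ref{prop:sym-Phi-barbar_Sigma_i}) combined with the appropriate behavior of $D_j$, $D_j^{-1}$, and $Q_j$ at $\overline{\lambda_+}=\lambda_-$.

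For part~(1), I would substitute \eqref{sym_Phi_barbar_Sigma_i} into the scattering relation \eqref{scat}. Starting from $\Phi_1(x,t,\lambda_-)=\Phi_2(x,t,\lambda_-)\,s(\lambda_-)$, taking complex conjugates, and applying \eqref{sym_Phi_barbar_Sigma_i} to both $\Phi_1$ and $\Phi_2$ gives
\[
-\Phi_1(\lambda_+) = -\Phi_2(\lambda_+)\,\overline{s(\overline{\lambda_+})}.
\]
Comparing with $\Phi_1(\lambda_+)=\Phi_2(\lambda_+)\,s(\lambda_+)$ and using $\det\Phi_2(\lambda_+)=1$ to cancel $\Phi_2$ yields \eqref{sym_s_barbar}.

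For part~(2), I would use the relation $\tilde\Phi_j=D_j\Phi_j\eul^{Q_j}$. The first thing to record is that on the cut one still has $\overline{D_j(\overline{\lambda_+})}=-D_j(\lambda_+)$ and $\overline{Q_j(\overline{\lambda_+})}=Q_j(\lambda_+)$; these are consequences of the boundary version of \eqref{sym_k_i-c}, which extends continuously to $\dot\Sigma_j$, together with the convention $\overline{\sqrt{w}}=-\sqrt{\overline{w}}$ fixed in the definition of $D_j$. Combining these with \eqref{sym_Phi_barbar_Sigma_i} gives
\[
\overline{\tilde\Phi_j(\overline{\lambda_+})}
= \overline{D_j(\overline{\lambda_+})}\;\overline{\Phi_j(\overline{\lambda_+})}\;\overline{\eul^{Q_j(\overline{\lambda_+})}}
= \bigl(-D_j(\lambda_+)\bigr)\bigl(-\Phi_j(\lambda_+)\bigr)\,\eul^{Q_j(\lambda_+)}
= \tilde\Phi_j(\lambda_+),
\]
which is \eqref{sym_tildePhi_barbar_Sigma_i}. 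Part~(3) then follows immediately by multiplying both sides of \eqref{sym_tildePhi_barbar_Sigma_i} on the left by $\overline{D_j^{-1}(\overline{\lambda_+})}=-D_j^{-1}(\lambda_+)$.

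The only delicate point is justifying that the identities for $D_j^{\pm 1}$ and $Q_j$ originally derived on $\mathbb{C}\setminus\Sigma_j$ really do pass to the boundary values $\lambda_\pm$ on $\dot\Sigma_j$. Since $k_j$ is real on each side of the cut (by \eqref{sym_k_i-d}) and the branches of the square roots appearing in $D_j$ are fixed by a consistent global convention (branch cut $[0,\infty)$, with $\sqrt{-1}=\ii$), all these relations extend by continuity from $\mathbb{C}\setminus\Sigma_j$ to the traces on $\dot\Sigma_j$. Once this is noted, the three parts reduce to the same algebraic manipulations as in the off-cut corollary, and no further input is needed.
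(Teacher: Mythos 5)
Your proposal is correct and follows essentially the same route as the paper: part (1) by substituting \eqref{sym_Phi_barbar_Sigma_i} into \eqref{scat}, and parts (2)–(3) by combining that symmetry with the boundary relations $\overline{D_j^{\pm1}(\lambda_-)}=-D_j^{\pm1}(\lambda_+)$ and $\overline{Q_j(\lambda_-)}=Q_j(\lambda_+)$ via $\tilde\Phi_j=D_j\Phi_j\eul^{Q_j}$. Your added remark on the passage of these identities to the boundary values on $\dot\Sigma_j$ is a harmless elaboration of what the paper leaves implicit.
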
 

\begin{proof} \begin{enumerate}
    \item Substitute \eqref{sym_Phi_barbar_Sigma_i} into \eqref{scat}.
    
    \item Observe that due to \eqref{sym_k_i-c} and $\overline{\sqrt{\frac{1}{\ii A_jk_j(\overline{\lambda})}-1}}=-\sqrt{\frac{1}{\ii A_jk_j(\lambda)}-1}$, we have $\overline{D_j^{-1}(\lambda_-)}=- D_j^{-1}(\lambda_+)$ and $\overline{Q_j(\lambda_-)}=Q_j(\lambda_+)$ for $\lambda\in\dot\Sigma_j$.  Combining this with \eqref{sym_Phi_barbar_Sigma_i} and using the connection between $\Phi_j$ and $\tilde\Phi_j$, we obtain the result.
    
    \item Combine $\overline{D_j^{-1}(\lambda_-)}=- D_j^{-1}(\lambda_+)$ and $\overline{Q_j(\lambda_-)}=Q_j(\lambda_+)$ and \eqref{sym_tildePhi_barbar_Sigma_i}.
\end{enumerate}
\end{proof}

\textit{Fourth symmetry $\lambda_+ \longleftrightarrow \lambda_+$.}

\begin{proposition}\label{prop:sym-Phi-bar} The following symmetry holds
\begin{equation}\label{sym_Phi_bar}
    \overline{\Phi_j(\lambda_+)}=\ii \Phi_j(\lambda_+)\sigma_1, \qquad \lambda\in\dot\Sigma_j.
    \end{equation}
\end{proposition}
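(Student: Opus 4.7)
The plan is to derive this fourth symmetry by combining the three symmetries already established for boundary values on $\dot\Sigma_j$ (Propositions~\ref{prop:sym_Phi_minus_sigma_i}, \ref{prop:sym-Phi-(minus)}, \ref{prop:sym-Phi-barbar_Sigma_i}), rather than carrying out yet another asymptotic comparison at spatial infinity. The crucial observation is that for $\lambda\in\dot\Sigma_j\subset\mathbb{R}$ the four boundary values $\{\lambda_+,\lambda_-,(-\lambda)_+,(-\lambda)_-\}$ form a single orbit under the discrete transformations $\lambda\mapsto-\lambda$, $\lambda\mapsto-\overline\lambda$ and $\lambda\mapsto\overline\lambda$---from the Notations, $\overline{\lambda_+}=\lambda_-$ and $-\lambda_+=(-\lambda)_-$---so the fourth relation is forced.

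Concretely, I will first rewrite Proposition~\ref{prop:sym_Phi_minus_sigma_i} using $-\lambda_+=(-\lambda)_-$ as $\Phi_j(\lambda_+)=-\sigma_3\Phi_j((-\lambda)_-)\sigma_3$. I then apply Proposition~\ref{prop:sym-Phi-barbar_Sigma_i} at the point $-\lambda\in\dot\Sigma_j$ (using $\overline{(-\lambda)_+}=(-\lambda)_-$) to replace $\Phi_j((-\lambda)_-)$ by $-\overline{\Phi_j((-\lambda)_+)}$, which gives $\Phi_j(\lambda_+)=\sigma_3\overline{\Phi_j((-\lambda)_+)}\sigma_3$. Taking complex conjugates on both sides, and using that $\sigma_3$ is real, yields $\overline{\Phi_j(\lambda_+)}=\sigma_3\Phi_j((-\lambda)_+)\sigma_3$. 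Finally, rewriting Proposition~\ref{prop:sym-Phi-(minus)} as $\Phi_j((-\lambda)_+)=\sigma_3\Phi_j(\lambda_+)\sigma_2$ and substituting collapses the right-hand side to $\Phi_j(\lambda_+)\sigma_2\sigma_3$, and the Pauli identity $\sigma_2\sigma_3=\ii\sigma_1$ then closes the identity.

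The only genuine difficulty here is careful bookkeeping of signs and of which side of the branch cut each expression lives on; in particular, confusing $-\lambda_+$ with $(-\lambda)_+$ at the first substitution would spoil the accumulated $\pm$ signs coming from Propositions~\ref{prop:sym_Phi_minus_sigma_i} and~\ref{prop:sym-Phi-barbar_Sigma_i}. A more direct alternative, in the style of the previous propositions, would observe that $U$ and $V$ are real for $\lambda\in\mathbb{R}$, so $\overline{\Phi_j(\lambda_+)}$ is another matrix solution of the Lax pair at $\lambda\in\dot\Sigma_j$, and then match it to $\ii\Phi_j(\lambda_+)\sigma_1$ as $x\to(-1)^j\infty$ using $\overline{Q_j(\lambda_+)}=-Q_j(\lambda_+)$ (which holds because $k_j(\lambda_+)\in\mathbb{R}$ on $\dot\Sigma_j$) together with the conjugation identity $\overline{D_j^{-1}(\lambda_+)}=\ii D_j^{-1}(\lambda_+)\sigma_1$ and the intertwining $\sigma_1 e^{-Q_j}=e^{Q_j}\sigma_1$. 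However, that route requires a delicate verification of the $D_j$-identity with the correct choice of square-root branch, which the combinatorial approach sidesteps entirely.
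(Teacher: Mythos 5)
Your derivation is correct, but it follows a genuinely different route from the paper's. The paper proves \eqref{sym_Phi_bar} directly, in the same style as the preceding propositions: it observes that $\overline{U(\lambda_+)}\equiv U(\lambda_+)$ and $\overline{V(\lambda_+)}\equiv V(\lambda_+)$ on $\Sigma_j$, so $\overline{\Phi_j(\lambda_+)}$ is again a solution of the Lax pair, and then matches it to $\ii\Phi_j(\lambda_+)\sigma_1$ as $x\to(-1)^j\infty$ using \eqref{sym_k_i-d} together with the explicit square-root identities $\sqrt{-\frac{1}{\ii A_jk_j(\lambda_+)}-1}\cdot\frac{\lambda_+A_j}{1+\ii A_jk_j(\lambda_+)}=-\ii\sqrt{\frac{1}{\ii A_jk_j(\lambda_+)}-1}$ and its companion --- i.e., exactly the ``direct alternative'' you sketch at the end. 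Your main argument instead composes the three already-proved boundary symmetries: Proposition~\ref{prop:sym_Phi_minus_sigma_i} sends $\lambda_+$ to $(-\lambda)_-$, Proposition~\ref{prop:sym-Phi-barbar_Sigma_i} applied at $-\lambda$ sends $(-\lambda)_-$ to $(-\lambda)_+$ with a conjugation, and Proposition~\ref{prop:sym-Phi-(minus)} returns $(-\lambda)_+$ to $\lambda_+$; I checked the sign bookkeeping ($-\lambda_+=(-\lambda)_-$, $\Phi_j((-\lambda)_-)=-\overline{\Phi_j((-\lambda)_+)}$, $\Phi_j((-\lambda)_+)=\sigma_3\Phi_j(\lambda_+)\sigma_2$, and $\sigma_2\sigma_3=\ii\sigma_1$) and it closes correctly, with no circularity since none of the three cited propositions relies on \eqref{sym_Phi_bar}. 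What your approach buys is that it avoids any further branch-of-square-root verification, exposing \eqref{sym_Phi_bar} as an algebraic consequence of the orbit structure of $\{\lambda_\pm,(-\lambda)_\pm\}$ under the three involutions; what the paper's approach buys is uniformity of method (every symmetry proved the same way) and, more importantly, the explicit conjugation identities for $D_j(\lambda_+)$, which are reused later (e.g., in the proof of \eqref{sym_tildePhi_bar_sigma_i} and \eqref{sym_bar}) and would still have to be established separately if one took your shortcut here.
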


\begin{proof}
Since $\overline{U(\lambda_+)}\equiv U(\lambda_+)$ and $\overline{V(\lambda_+)}\equiv V(\lambda_+)$ for $\lambda\in\Sigma_j$, 
in follows that if $\Phi_j(\lambda_+)$ solves \eqref{Lax}, so does $\overline{\Phi_j(\lambda_+)}$. Comparing their asymptotic behaviour as $x\to(-1)^j\infty$ and using \eqref{sym_k_i-d} and the equalities  $\sqrt{-\frac{1}{\ii A_jk_j(\lambda_+)}-1}\cdot\frac{\lambda_+A_j}{1+\ii A_jk_j(\lambda_+)}=-\ii\sqrt{\frac{1}{\ii A_jk_j(\lambda_+)}-1}$ and $\sqrt{\frac{1}{\ii A_jk_j(\lambda_+)}-1}\cdot\frac{\lambda_+A_j}{1-\ii A_jk_j(\lambda_+)}=\ii\sqrt{-\frac{1}{\ii A_jk_j(\lambda_+)}-1}$  for $\lambda\in\dot\Sigma_j$, the symmetry \eqref{sym_Phi_bar} follows.
\end{proof}

\begin{corollary}\label{cor: sym} We have
\begin{enumerate}
    
    \item $s(\lambda_+)=\sigma_1\overline{s(\lambda_+)}\sigma_1, ~ \lambda\in\dot\Sigma_1$,
    which, in terms of the matrix entries, reads as follows:
    \begin{subequations}\label{sym_s_bar_sigma1}
    \begin{alignat}{3} \label{sym_s_bar_sigma1-a}
    s_{11}(\lambda_+)&=\overline{s_{22}(\lambda_+)},\\\label{sym_s_bar_sigma1-b}
    s_{12}(\lambda_+)&=\overline{s_{21}(\lambda_+)}.
    \end{alignat}
    \end{subequations}
    
    \item $|s_{11}(\lambda_+)|^2-|s_{21}(\lambda_+)|^2=1$ for $\lambda\in \dot\Sigma_1$.

    \item $\big| \frac{s_{21}(\lambda_+)}{s_{11}(\lambda_+)} \big|\leq 1$ for $\lambda\in \dot\Sigma_1$.
  
    Notice that $\big| \frac{s_{21}(\lambda_+)}{s_{11}(\lambda_+)} \big|= 1$ for $\lambda\in \dot\Sigma_1$ iff $s_{11}(\lambda_+)=\infty$.

    \item 
    \begin{subequations}\label{sym_s_(minus)_bar_sigma_1)}
    \begin{alignat}{4}\label{sym_s_(minus)_bar_sigma_1)-a}
    s_{11}(\lambda_-)&=\overline{s_{22}(\lambda_-)},\qquad \lambda\in\dot\Sigma_1,\\\label{sym_s_(minus)_bar_sigma_1)-b}
    s_{12}(\lambda_-)&=\overline{s_{21}(\lambda_-)},\qquad \lambda\in\dot\Sigma_1.
    \end{alignat}
    \end{subequations}
    
    \item 
    \begin{equation}\label{sym_from_Psi_sigma_i}
    \Phi_j(\lambda_+)=\ii\Phi_j(\lambda_-) \sigma_1, \qquad \lambda\in\dot\Sigma_j.
    \end{equation}
    
    \item 
     \begin{subequations}\label{sym_from_Psi_sigma_i_}
     \begin{alignat}{4}\label{sym_from_Psi_sigma_i_a}
     \Phi_1^{(1)}(\lambda_+)&=\ii\Phi_1^{(2)}(\lambda_-), \qquad \lambda\in\dot\Sigma_1,\\\label{sym_from_Psi_sigma_i_b}
     \Phi_2^{(2)}(\lambda_+)&=\ii\Phi_2^{(1)}(\lambda_-), \qquad \lambda\in\dot\Sigma_2.
     \end{alignat}
    \end{subequations}
    
    \item  
    \begin{subequations}\label{sym_s11}
    \begin{alignat}{3} \label{sym_s11-a}
    s_{11}(\lambda_+)&=s_{22}(\lambda_-),\quad  & \lambda\in\dot\Sigma_1,\\\label{sym_s11-b}
    s_{11}(\lambda_+)&=-\ii s_{21}(\lambda_-),\quad  & \lambda\in\dot\Sigma_0,\\\label{sym_s11-c}
    s_{11}(\lambda_-)&=\ii s_{21}(\lambda_+),\quad  & \lambda\in\dot\Sigma_0.
    \end{alignat}
    \end{subequations}
    
    \item $\big| \frac{s_{21}(\lambda_+)}{s_{11}(\lambda_+)} \big|=1$ for $\lambda\in \dot\Sigma_0$.
    
    \item 
    \begin{equation}\label{sym_tildePhi_bar_sigma_i}
    \overline{\tilde\Phi_j(\lambda_+)}=\sigma_1\tilde\Phi_j(\lambda_+)\sigma_1, \qquad \lambda\in\dot\Sigma_j.
    \end{equation}

    \item 
    \begin{subequations}\label{lim_con}
    \begin{alignat}{3}
    \tilde\Phi_1^{(1)}(\lambda_-)=\sigma_1\tilde\Phi_1^{(2)}(\lambda_+),\quad \lambda\in\Sigma_1,\\
    \tilde\Phi_2^{(2)}(\lambda_-)=\sigma_1\tilde\Phi_2^{(1)}(\lambda_+),\quad \lambda\in\Sigma_2.
    \end{alignat}
    \end{subequations}
    
    \item 
    \begin{equation}\label{sym_bar}
    \overline{(D_j^{-1}\tilde\Phi_j)(\lambda_+)}=\ii(D_j^{-1}\tilde\Phi_j)(\lambda_+)\sigma_1, \quad \lambda\in\dot\Sigma_j.
    \end{equation}

    \item 
    \begin{subequations}\label{sym_D-tilde-phi} 
    \begin{alignat}{4}\label{sym_D-tilde-phi-a}
    D_j^{-1}(\lambda_-)\tilde\Phi_j^{(j)}(\lambda_-)&=(-\ii D_j^{-1}(\lambda_+)\tilde\Phi_j(\lambda_+)\sigma_1)^{(j)},\quad &\lambda\in\dot\Sigma_1,\\ \label{sym_D-tilde-phi-b}
    D_2^{-1}(\lambda_-)\tilde\Phi_2^{(2)}(\lambda_-)&=(-\ii D_2^{-1}(\lambda_+)\tilde\Phi_2(\lambda_+)\sigma_1)^{(2)},\quad &\lambda\in\dot\Sigma_0,\\ \label{sym_D-tilde-phi-c}
    D_1^{-1}(\lambda_-)\tilde\Phi_1^{(1)}(\lambda_-)&=D_1^{-1}(\lambda_+)\tilde\Phi_1^{(1)}(\lambda_+),\quad &\lambda\in\dot\Sigma_0.
    \end{alignat}
    \end{subequations}

    \item 
    \begin{subequations}\label{sym-D-tildePhi-comb}
    \begin{alignat}{4}
    (D_1^{-1}\tilde \Phi_1)((-\lambda)_+)&=\sigma_3\overline{(D_1^{-1}\tilde\Phi_1)(\lambda_+)},\quad &\lambda\in\dot\Sigma_1,\\
    (D_2^{-1}\tilde \Phi_2)((-\lambda)_+)&=-\sigma_3\overline{(D_2^{-1}\tilde\Phi_2)(\lambda_+)},\quad &\lambda\in\dot\Sigma_2.
    \end{alignat}
    \end{subequations}
    
    \item 
    \begin{equation}\label{sym_s11_Sigma_1_comb}
    s_{11}((-\lambda)_+)=\overline{s_{11}(\lambda_+)}, \quad\lambda\in\dot\Sigma_1.
    \end{equation}
    \end{enumerate}
\end{corollary}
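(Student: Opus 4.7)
The identity \eqref{sym_s11_Sigma_1_comb} is a combined $\lambda\leftrightarrow -\overline{\lambda}$ symmetry for $s_{11}$, and the natural strategy is to obtain it as the composition of the first symmetry ($\lambda\leftrightarrow-\lambda$) with the third symmetry ($\lambda\leftrightarrow\overline\lambda$) at the level of the scattering coefficients. No new analysis of eigenfunctions is required beyond reading off the $(1,1)$-entries of two already-established matrix identities in the corollary.

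Concretely, I would start from \eqref{sym_s_minus_sigma1}, namely $s(\lambda_+)=\sigma_3 s(-\lambda_+)\sigma_3$ on $\dot\Sigma_1$. Since conjugation by $\sigma_3$ leaves diagonal entries untouched, the $(1,1)$-entry together with the notational identity $-\lambda_+=(-\lambda)_-$ yields
\[
s_{11}(\lambda_+)=s_{11}((-\lambda)_-).
\]
Next, I invoke the third-symmetry identity \eqref{sym_s_barbar}, $\overline{s(\overline{\lambda_+})}=s(\lambda_+)$ for $\lambda\in\dot\Sigma_1$; extracting its $(1,1)$-entry and using $\overline{\lambda_+}=\lambda_-$ gives $\overline{s_{11}(\lambda_-)}=s_{11}(\lambda_+)$, i.e.\ $s_{11}(\lambda_-)=\overline{s_{11}(\lambda_+)}$. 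Because $\dot\Sigma_1=(-\infty,-1/A_1)\cup(1/A_1,\infty)$ is symmetric under $\lambda\mapsto-\lambda$, this last identity is applicable at the point $-\lambda\in\dot\Sigma_1$, giving $s_{11}((-\lambda)_-)=\overline{s_{11}((-\lambda)_+)}$. Chaining the two equalities produces $s_{11}(\lambda_+)=\overline{s_{11}((-\lambda)_+)}$, and taking the complex conjugate of both sides delivers exactly \eqref{sym_s11_Sigma_1_comb}.

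The argument is purely algebraic and I do not anticipate any substantive obstacle: both propositions whose corollaries are invoked have already done the analytic work (the invariance of $U,V$ under $\lambda\mapsto-\lambda$ and under complex conjugation on $\Sigma_j$). The one point demanding genuine care is the $\pm$-side bookkeeping, since each of the two elementary symmetries flips the side of the cut (via $-\lambda_+=(-\lambda)_-$ and $\overline{\lambda_+}=\lambda_-$ respectively), while their composition must land on the \emph{upper} side $(-\lambda)_+$ to match the left-hand side of \eqref{sym_s11_Sigma_1_comb}. Verifying this consistency of the side labels along the chain is the only nontrivial check.
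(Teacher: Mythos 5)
Your argument addresses only item (14) of the corollary, i.e.\ the single identity \eqref{sym_s11_Sigma_1_comb}; the other thirteen items are left untouched, so as a proof of the full statement it is incomplete. For item (14) itself, however, your derivation is correct, and it takes a slightly different route from the paper's. The paper obtains \eqref{sym_s11_Sigma_1_comb} by combining the second-symmetry relation \eqref{sym_s_(minus)_sigma1}, $s(\lambda_+)=\sigma_2 s((-\lambda)_+)\sigma_2$, whose $(1,1)$-entry gives $s_{11}(\lambda_+)=s_{22}((-\lambda)_+)$ directly between two points on the \emph{upper} side of the cut, with the entrywise identity \eqref{sym_s_bar_sigma1-a}, $s_{11}=\overline{s_{22}}$; no side-crossing ever occurs. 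You instead compose the first symmetry \eqref{sym_s_minus_sigma1} (conjugation by $\sigma_3$, which sends $\lambda_+$ to $(-\lambda)_-$) with the third symmetry \eqref{sym_s_barbar} (which sends $\lambda_+$ to $\lambda_-$), so that the two side-flips cancel. Your $\pm$-bookkeeping via $-\lambda_+=(-\lambda)_-$ and $\overline{\lambda_+}=\lambda_-$, and the observation that $\dot\Sigma_1$ is invariant under $\lambda\mapsto-\lambda$ so that \eqref{sym_s_barbar} may be applied at $-\lambda$, are all exactly right, and the chain $s_{11}(\lambda_+)=s_{11}((-\lambda)_-)=\overline{s_{11}((-\lambda)_+)}$ delivers the claim after conjugation. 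The two arguments are logically equivalent (all four elementary symmetries are already available in the corollary), but the paper's choice avoids the side-crossing entirely, which is the one delicate point you correctly flag in your own version.
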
 

\begin{proof}
\begin{enumerate}
    \item Substitute \eqref{sym_Phi_bar} into \eqref{scat}.
    
    \item This follows from the fact that $\det{s(\lambda_\pm)=1}$ for all $\lambda\in\Sigma_1$ and \eqref{sym_s_bar_sigma1}.
    
    \item Dividing the previous equality by $|s_{11}(\lambda_+)|^2$, we obtain $1-\big|\frac{s_{21}(\lambda_+)}{s_{11}(\lambda_+)}\big|^2=\big|\frac{1}{s_{11}(\lambda_+)}\big|^2\geq0$. Hence $\big|\frac{s_{21}(\lambda_+)}{s_{11}(\lambda_+)}\big|\leq 1$.
    
    \item Combine \eqref{sym_s_bar_sigma1} and \eqref{sym_s_barbar}.
    
    \item Combine \eqref{sym_Phi_bar} and \eqref{sym_Phi_barbar_Sigma_i}.
    
    \item Rewrite \eqref{sym_from_Psi_sigma_i} columnwise.
    
    \item Substituting \eqref{sym_from_Psi_sigma_i} into \eqref{scatcoeff-a} leads to \eqref{sym_s11}. Notice that in proving \eqref{sym_s11-b} and  \eqref{sym_s11-c}
    we use the fact that $\Phi_1^{(1)}$ is analytic on $\Sigma_0$.
    
    \item Using the previous result for the first equality and \eqref{sym_s11_barbar} for the second one, we get $\big| \frac{s_{21}(\lambda_+)}{s_{11}(\lambda_+)} \big|=\big| \frac{-\ii s_{11}(\lambda_-)}{s_{11}(\lambda_+)} \big|=\big| \frac{\overline{ s_{11}(\lambda_+)}}{s_{11}(\lambda_+)} \big|=1$.
    
    \item Observe that $\sqrt{-\frac{1}{\ii A_jk_j(\lambda_+)}-1}\cdot\frac{\lambda_+A_j}{1+\ii A_jk_j(\lambda_+)}=-\ii\sqrt{\frac{1}{\ii A_jk_j(\lambda_+)}-1}$ and $\sqrt{\frac{1}{\ii A_jk_j(\lambda_+)}-1}\cdot\frac{\lambda_+A_j}{1-\ii A_jk_j(\lambda_+)}=\ii\sqrt{-\frac{1}{\ii A_jk_j(\lambda_+)}-1}$ imply $\overline{D_j^{-1}(\lambda_+)}=\ii D_j^{-1}(\lambda_+)\sigma_1$ and $\overline{D_j(\lambda_+)}=-\ii \sigma_1D_j(\lambda_+)$, and \eqref{sym_k_i-d} imply $\overline{Q_j(\lambda_+)}=-Q_j(\lambda_+)$ for $\lambda\in\dot\Sigma_j$. Combining this with \eqref{sym_Phi_bar} and using the connection between $\Phi_j$ and $\tilde\Phi_j$, we obtain \eqref{sym_tildePhi_bar_sigma_i}.
    
    \item Combine \eqref{sym_tildePhi_bar_sigma_i} and \eqref{sym_tildePhi_barbar_Sigma_i}.
    
         \item Combine $\overline{D_j^{-1}(\lambda_+)}=\ii D_j^{-1}(\lambda_+)\sigma_1$ and  \eqref{sym_tildePhi_bar_sigma_i}.
    
    \item
    Use \eqref{sym_bar} combined with \eqref{sym_bar_Sigma_i}  for the first two equalities and the fact that $k_1(\lambda)$ is analytic on $\dot\Sigma_0$ for the last one.
    \item Combine \eqref{sym_bar} and \eqref{sym_(minus)}.
    
    \item \eqref{sym_s_(minus)_sigma1} implies $s_{22}(\lambda_+)=s_{11}((-\lambda)_+)$. Combine this with \eqref{sym_s_bar_sigma1-a}.
\end{enumerate}
\end{proof}

\subsection{Limits of the eigenfunctions and scattering coefficients from below and above the branch cut}  \label{sec:limits}
Recall that $k_j(\lambda)$ is analytic in $\mathbb{C}\setminus\Sigma_j$ and discontinuous across $\Sigma_j$.

\begin{notations*} It will be useful in what follows to introduce the following notations (for $\lambda \in\Sigma_j$):

\[k_j^+(\lambda):=k_j(\lambda_+)=\lim_{\epsilon\downarrow 0}k_j(\lambda+\ii\epsilon), \qquad k_j^-(\lambda):=k_j(\lambda_-)=\lim_{\epsilon\downarrow 0}k_j(\lambda-\ii\epsilon).\]
Similarly, 
\[\tilde\Phi_1^{(1)+}(\lambda):=\tilde\Phi_1^{(1)}(\lambda_+)=\lim_{\epsilon\downarrow 0}\tilde\Phi_1^{(1)}(\lambda+\ii\epsilon),\qquad
\tilde\Phi_1^{(1)-}(\lambda):=\tilde\Phi_1^{(1)}(\lambda_-)=\lim_{\epsilon\downarrow 0}\tilde\Phi_1^{(1)}(\lambda-\ii\epsilon).\]
\end{notations*}
Observe that 
\begin{subequations}\label{lim_k}
\begin{alignat}{3}
k_j^-(\lambda)&=-k_j^+(\lambda),\quad \lambda\in\Sigma_1,\\
k_1^-(\lambda)&=k_1^+(\lambda)=k_1(\lambda),\quad \lambda\in\Sigma_0,\\
k_2^-(\lambda)&=-k_2^+(\lambda),\quad \lambda\in\Sigma_0.
\end{alignat}
\end{subequations}
Combining \eqref{sym_tildePhi_bar_sigma_i} and \eqref{sym_tildePhi_barbar_Sigma_i} we have
\begin{subequations}\label{lim_con_}
\begin{alignat}{3}
\tilde\Phi_1^{(1)-}(\lambda)=\sigma_1\tilde\Phi_1^{(2)+}(\lambda),\quad \lambda\in\Sigma_1,\\
\tilde\Phi_2^{(2)-}(\lambda)=\sigma_1\tilde\Phi_2^{(1)+}(\lambda),\quad \lambda\in\Sigma_2.
\end{alignat}
\end{subequations}


\subsection{Discrete spectrum and zeros of scattering coefficients}  \label{sec:discrete spectrum}
Multiplying  \eqref{Lax-x} by $\begin{pmatrix}
0&-1\\1&0
\end{pmatrix}$ we arrive at the spectral problem for a weighted Dirac operator:
\begin{equation}\label{Dirac}
    \frac{2}{m}\left( \begin{pmatrix}
0&-1\\1&0
\end{pmatrix} \Phi_x+\frac{1}{2}\begin{pmatrix}
0&1\\1&0
\end{pmatrix}\Phi\right)=\lambda \Phi, \quad x\in(-\infty,\infty). 
\end{equation}
Since $\lim_{x\to(-1)^j\infty}m(x,t)=A_j\ne 0$, this operator can be viewed
as a self-adjoint operator in $L^2(-\infty,\infty)$ and thus its spectrum in real.

Observe that for $\lambda\in\dot\Sigma_1$, both $k_j(\lambda)$, $j=1,2$ are real-valued and hence the eigenfunctions $\Phi_j$ are bounded but not square integrable near $(-1)^j\infty$. Since they are related by a matrix independent on $x$ and $t$, $\Phi_j$ are  bounded and not square integrable near $\pm\infty$. Hence $\dot\Sigma_1$ comprise the continuous spectrum.

For $\lambda\in (-1/A_2, 1/A_2)$, $\Phi_1^{(1)}$ decays (exponentially fast) as
$x\to -\infty$ and $\Phi_2^{(2)}$ decays (exponentially fast) as
$x\to +\infty$; hence the 
the eigenvalues in $(-1/A_2, 1/A_2)$
coincides with the zeros of $s_{11}(\lambda)=\det(\Phi_1^{(1)},\Phi_2^{(2)})$.

Note that since $|s_{11}(\lambda_+)|^2-|s_{21}(\lambda_+)|^2=1$ for $\lambda\in \dot\Sigma_1$ (see Corollary \ref{cor: sym}), we have $s_{11}(\lambda_+)\neq 0$ for $\lambda\in \dot\Sigma_1$. 

Let's show that $s_{11}(\lambda_+)\neq 0$ as well as 
$s_{21}(\lambda_+)\neq 0$ for $\lambda\in \dot\Sigma_0$
(the similar result for $\lambda_-$ will then follow   from the symmetry \eqref{sym_s_barbar}).
Indeed, we have  $|\frac{s_{21}}{s_{11}}(\lambda_\pm)|=1$ for $\lambda\in\dot\Sigma_0$ (see Corollary \ref{cor: sym}). 
Hence  $s_{11}(\lambda_{0+})s_{21}(\lambda_{0+})=0$ iff
$s_{11}(\lambda_{0+})=0$ and $s_{21}(\lambda_{0+})=0$ simultaneously.
But $s_{11}(\lambda_{0+})=0$ implies that $\Phi_1^{(1)}(\lambda_{0+})$ and $\Phi_2^{(2)}(\lambda_{0+})$ are dependent. Silarly, $s_{21}(\lambda_{0+})=0$ implies that $\Phi_1^{(1)}(\lambda_{0+})$ and $\Phi_2^{(1)}(\lambda_{0+})$ are dependent. Hence $\Phi_2^{(1)}(\lambda_{0+})$ and $\Phi_2^{(2)}(\lambda_{0+})$ are dependent, which 
contradicts the fact that  $\det\Phi_{0,2}\equiv 1$ (the latter follows from evaluating
$\det\Phi_{0,2}(x,t,\lambda)$ as $x\to\infty$ and using the fact that 
the determinant of a matrix composed by two vector solutions of \eqref{Dirac}
does not depend on $x$).

\textit{Assumption. }We will assume that $s_{11}(\lambda)$ has a finite number of zeros on $\mathbb{R}\setminus\Sigma_2$. Since $s_{11}$ is analytic on $\mathbb{C}\setminus\Sigma_2$, the uniqueness theorem implies that the sufficient condition is $s_{11}(\pm\frac{1}{A_2})\neq0$.

Let $\{\lambda_k\}_{k=1}^n$ be the zeros of $s_{11}(\lambda)$.
For such $\lambda_k$ we have \[\Phi_1^{(1)}(\lambda_k)=b_k\Phi_2^{(2)}(\lambda_k), \quad b_k\coloneqq b(\lambda_k).\]
\begin{proposition}\label{prop:simple-zeros}
The zeros of $s_{11}(\lambda)$ are simple.
\end{proposition}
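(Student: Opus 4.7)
The plan is to derive an integral representation for $s_{11}'(\lambda_k)$ of Plemelj--Wronskian type, and then use positivity of $m$ to show it is nonzero.

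First I would exploit the $x$-independence of $s_{11}(\lambda) = \det(\Phi_1^{(1)}, \Phi_2^{(2)})$ and write
\[
s_{11}'(\lambda) = \det(\Phi_{1\lambda}^{(1)}, \Phi_2^{(2)}) + \det(\Phi_1^{(1)}, \Phi_{2\lambda}^{(2)}).
\]
Differentiating \eqref{Lax-x} in $\lambda$ gives $\partial_x \Phi_{j\lambda}^{(j)} = U \Phi_{j\lambda}^{(j)} + U_\lambda \Phi_j^{(j)}$, and using $\mathrm{tr}\,U = 0$ one obtains the two pointwise identities
\[
\partial_x \det(\Phi_{1\lambda}^{(1)}, \Phi_2^{(2)}) = \det(U_\lambda \Phi_1^{(1)}, \Phi_2^{(2)}),\qquad
\partial_x \det(\Phi_1^{(1)}, \Phi_{2\lambda}^{(2)}) = \det(\Phi_1^{(1)}, U_\lambda \Phi_2^{(2)}).
\]

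Next, at $\lambda = \lambda_k$ the proportionality $\Phi_1^{(1)} = b_k \Phi_2^{(2)}$ forces the common vector to be an $L^2$ eigenfunction of the Dirac-type operator \eqref{Dirac}, so it decays exponentially at both $\pm\infty$. The $\lambda$-derivative $\Phi_{j\lambda}^{(j)}$ may grow at most polynomially in $x$ (from differentiating the exponent $Q_j$), but this growth is dominated by the exponential decay of $\Phi_{j'}^{(j')}$ at the opposite infinity. Consequently the boundary terms of the two Wronskians vanish at the relevant infinities, and integrating the identities above from $-\infty$ to $x$ and from $x$ to $+\infty$ respectively, while noting that a direct calculation with $U_\lambda = \frac{m}{2}\bigl(\begin{smallmatrix}0 & 1\\-1 & 0\end{smallmatrix}\bigr) = -\tfrac{\ii m}{2}\sigma_2$ yields the algebraic identity $\det(U_\lambda v, w) + \det(v, U_\lambda w) \equiv 0$, I obtain
\[
s_{11}'(\lambda_k) = \int_{-\infty}^{+\infty} \det\bigl(U_\lambda(\xi, t)\Phi_1^{(1)}(\xi, t, \lambda_k), \Phi_2^{(2)}(\xi, t, \lambda_k)\bigr)\, d\xi.
\]

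To conclude, I would invoke the third symmetry in Proposition \ref{prop:sym-Phi-barbar_notin_sigma_i}: since $\lambda_k \in \mathbb{R} \setminus \Sigma_2$, the relation $\overline{\Phi_j^{(j)}(\overline{\lambda})} = -\Phi_j^{(j)}(\lambda)$ reduces to $\overline{\Phi_j^{(j)}(\lambda_k)} = -\Phi_j^{(j)}(\lambda_k)$, so $\Phi_j^{(j)}(\lambda_k) = \ii v_j$ for a real-valued vector $v_j$, and correspondingly $b_k \in \mathbb{R}$. Writing $\Phi_2^{(2)}(\lambda_k) = \ii v$ with $v = (v_1, v_2)^\top$ real, a short calculation yields
\[
\det\bigl(U_\lambda (b_k \ii v), \ii v\bigr) = -\tfrac{b_k m}{2}\bigl(v_1^2 + v_2^2\bigr),
\]
so that $s_{11}'(\lambda_k) = -\tfrac{b_k}{2}\int_{-\infty}^{+\infty} m(\xi, t)\,|v(\xi, t)|^2\,d\xi$. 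Because $m > 0$ by the standing assumption, $b_k \ne 0$, and $v \not\equiv 0$, this is nonzero, proving that $\lambda_k$ is a simple zero.

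The main obstacle I anticipate is the decay verification in the middle step: one must carefully track that the $\lambda$-derivative of $\Phi_j^{(j)}$, which inherits a factor $\partial_\lambda e^{-Q_j}$ growing linearly in $|x|$, is genuinely dominated by the exponential decay of the paired eigenfunction at the opposite infinity. This uses the fact that at $\lambda_k$ both $\Phi_1^{(1)}$ and $\Phi_2^{(2)}$ represent (up to the constant $b_k$) the same $L^2$ eigenfunction, so both decay exponentially at both ends; everything else in the proof is direct computation.
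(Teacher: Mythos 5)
Your proposal is correct and follows essentially the same route as the paper: differentiating the Wronskian representation $s_{11}=\det(\Phi_1^{(1)},\Phi_2^{(2)})$ in $\lambda$, deriving the $x$-derivative identities for the two perturbed Wronskians from \eqref{Lax-x}, integrating them over the two half-lines, and invoking the reality symmetry \eqref{sym_phi_barbar-notin_Sigma_i} together with $m>0$ and $b_k\neq 0$ to conclude $s_{11}'(\lambda_k)\neq 0$. The only differences are cosmetic (your version keeps the factor $\tfrac12$ in $U_\lambda$ and makes the boundary-decay justification explicit, which the paper glosses over), and the overall constant in front of the integral is immaterial for the nonvanishing conclusion.
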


\begin{proof}
We will denote by $'$ the dirivative w.r.t. $\lambda$.

Using the definition of $s_{11}(\lambda)$ we have
\[s_{11}'(\lambda)=\det(\Phi_1^{(1)},\Phi_2^{(2)})'(\lambda)=\det((\Phi')_1^{(1)},\Phi_2^{(2)})(\lambda)+\det(\Phi_1^{(1)},(\Phi')_2^{(2)})(\lambda).\]
Since $\Phi_j^{(j)}$ solves \eqref{Lax-x}, we have 
\[(\Phi')_{jx}^{(j)}=U(\Phi')_j^{(j)}+m\begin{pmatrix}
0&1\\-1&0
\end{pmatrix}\Phi_j^{(j)},\]
and, using the fact that $\det(U(\Phi')_1^{(1)},\Phi_2^{(2)})=-\det((\Phi')_1^{(1)},U\Phi_2^{(2)})$, we have
\[\frac{d}{dx}\det((\Phi')_1^{(1)},\Phi_2^{(2)})=\det\left(\begin{pmatrix}
0&m\\-m&0
\end{pmatrix}\Phi_1^{(1)},\Phi_2^{(2)}\right),\]
and
\[\frac{d}{dx}\det(\Phi_1^{(1)},(\Phi')_2^{(2)})=-\det\left(\begin{pmatrix}
0&m\\-m&0
\end{pmatrix}\Phi_2^{(2)},\Phi_1^{(1)}\right).\]
Evaluating at $\lambda=\lambda_k$ and using $\Phi_1^{(1)}(\lambda_k)=b_k\Phi_2^{(2)}(\lambda_k)$, we get
\[\frac{d}{dx}\det((\Phi')_1^{(1)},\Phi_2^{(2)})(\lambda_k)=b_km\det\left(\begin{pmatrix}
0&1\\-1&0
\end{pmatrix}\Phi_2^{(2)}(\lambda_k),\Phi_2^{(2)}(\lambda_k)\right),\]

\[\frac{d}{dx}\det(\Phi_1^{(1)},(\Phi')_2^{(2)})(\lambda_k)=-b_km\det\left(\begin{pmatrix}
0&1\\-1&0
\end{pmatrix}\Phi_2^{(2)}(\lambda_k),\Phi_2^{(2)}(\lambda_k)\right).\]
Using the symmetry \eqref{sym_phi_barbar-notin_Sigma_i} and observing that $\lambda_k\in\mathbb{R}$, we have
\[\det(\begin{pmatrix}
0&1\\-1&0
\end{pmatrix}\Phi_2^{(2)}(\lambda_k),\Phi_2^{(2)}(\lambda_k))=-(|(\Phi_2)_{22}|^2+|(\Phi_2)_{12}|^2)(\lambda_k)\]
and hence
\[\frac{d}{dx}\det((\Phi')_1^{(1)},\Phi_2^{(2)})(\lambda_k)=b_k\int_x^{\infty}m(|(\Phi_2)_{22}|^2+|(\Phi_2)_{12}|^2)d\tau,\]
\[\frac{d}{dx}\det(\Phi_1^{(1)},(\Phi')_2^{(2)})(\lambda_k)=b_k\int^x_{-\infty}m(|(\Phi_2)_{22}|^2+|(\Phi_2)_{12}|^2)dx.\]
It follows that 
\[s_{11}'(\lambda_k)=b_k\int^\infty_{-\infty}m(|(\Phi_2)_{22}|^2+|(\Phi_2)_{12}|^2)dx,\]
and thus $s'_{11}(\lambda_k)\neq0$.

\end{proof}

Observe that due to the symmetry \eqref{sym_s11_minus_notSigma_2}, if  $s_{11}(\lambda_k)=0$, then $s_{11}(-\lambda_k)=0$ as well. Since, according to  Proposition \ref{prop:simple-zeros},
all zeros of $s_{11}$ are simple, it follows that  $s_{11}(0)\ne 0$. 
This fact will also be discussed in Subsection \ref{Eigen_0}.

\subsection{Behaviour at the branch points}  \label{sec:branch points}

Observe that $k_j(\pm\frac{1}{A_j})=0$.
\begin{proposition}\label{prop:sing_beh} $\tilde\Phi_j(x,t,\lambda)$ has the following behaviour at the branch points
\[\tilde\Phi_j(x,t,\lambda)=\frac{\ii\alpha_j(x,t)}{\omega_j^+(\lambda)}\begin{pmatrix}
1&1\\-1&-1
\end{pmatrix}+\begin{pmatrix}
a_j(x,t)&b_j(x,t)\\
b_j(x,t)&a_j(x,t)
\end{pmatrix}+\ord(\sqrt{\lambda-\frac{1}{A_j}}),\quad \lambda\to\frac{1}{A_j},\]
\[\tilde\Phi_j(x,t,\lambda)=\frac{\alpha_j(x,t)}{\omega_j^-(\lambda)}\begin{pmatrix}
1&-1\\1&-1
\end{pmatrix}+\begin{pmatrix}
a_j(x,t)&-b_j(x,t)\\
-b_j(x,t)&a_j(x,t)
\end{pmatrix}+\ord(\sqrt{\lambda+\frac{1}{A_j}}),\quad \lambda\to-\frac{1}{A_j},\]
whith some real-valued  $\alpha_j
(x,t)$, $a_j(x,t)$, and $b_j(x,t)$, $j=1,2$.
\end{proposition}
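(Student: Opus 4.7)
I would work out the expansion at $\lambda=\tfrac{1}{A_j}$ in detail; the one at $\lambda=-\tfrac{1}{A_j}$ is then obtained from the symmetry $\tilde\Phi_j(\lambda_+)=\sigma_2\tilde\Phi_j((-\lambda)_+)\sigma_2$ of Proposition~\ref{prop:sym-Phi-(minus)}.

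The first step is to pin down the singular structure of $\hat U_j$. Writing $k_j(\lambda)=\omega_j^+(\lambda)\omega_j^-(\lambda)$ and using that $\omega_j^-$ is analytic at $\tfrac{1}{A_j}$ with $\omega_j^-(\tfrac{1}{A_j})=\sqrt{2/A_j}=:c_j$, a direct computation from \eqref{U_i-hat} yields
\[
\hat U_j(\xi,t,\lambda)=\frac{-\ii\bigl(m(\xi,t)-A_j\bigr)}{2A_j^2c_j\,\omega_j^+(\lambda)}\,N+\ord(1),\qquad N\coloneqq\begin{pmatrix}1&1\\-1&-1\end{pmatrix},
\]
as $\lambda\to\tfrac{1}{A_j}$, while the exponential conjugation in \eqref{eq} reduces to $I+\ord(\omega_j^+(\lambda))$ (since $k_j\to 0$). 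The crucial algebraic observation is the nilpotency $N^2=0$: thanks to it, the higher-order poles that successive copies of $\hat U_j$ would generate in the Neumann iteration of \eqref{eq} cancel, and one obtains an expansion of the form
\[
\tilde\Phi_j(x,t,\lambda)=\frac{P_j(x,t)}{\omega_j^+(\lambda)}+R_j(x,t)+\ord(\omega_j^+(\lambda)).
\]

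Substituting this ansatz into \eqref{eq} and matching the coefficient of $(\omega_j^+(\lambda))^{-2}$ yields, for every $x$,
\[
\int_{(-1)^j\infty}^{x}\frac{-\ii\bigl(m(\xi,t)-A_j\bigr)}{2A_j^2c_j}\,N\,P_j(\xi,t)\,\dd\xi=0,
\]
so $NP_j(x,t)\equiv 0$. Consequently, each column of $P_j$ lies in $\ker N=\mathrm{span}\{(1,-1)^{T}\}$, forcing $P_j(x,t)=\left(\begin{smallmatrix}p_1 & p_2\\ -p_1 & -p_2\end{smallmatrix}\right)$ for some complex functions $p_1,p_2$.

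The remaining data are then extracted from the symmetries. Along the cut $[\tfrac{1}{A_j},\infty)$ one has $\omega_j^+(\lambda_+)\in\mathbb{R}$ and $\omega_j^+(\lambda_-)=-\omega_j^+(\lambda_+)$; substituting the ansatz into \eqref{sym_tildePhi_barbar_Sigma_i} and separating powers of $\omega_j^+$ gives $\overline{P_j}=-P_j$ and $\overline{R_j}=R_j$ (so $P_j$ is purely imaginary and $R_j$ is real), and doing the same with \eqref{sym_tildePhi_bar_sigma_i} yields $\overline{P_j}=\sigma_1 P_j\sigma_1$ and $\overline{R_j}=\sigma_1 R_j\sigma_1$. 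A short algebraic check (using $\sigma_1 N\sigma_1=-N$) shows that these four identities force $p_1=p_2=\ii\alpha_j$ with $\alpha_j\in\mathbb{R}$, hence $P_j=\ii\alpha_j N$, and $R_j=\left(\begin{smallmatrix}a_j & b_j\\ b_j & a_j\end{smallmatrix}\right)$ with $a_j,b_j\in\mathbb{R}$. Finally, the expansion at $\lambda=-\tfrac{1}{A_j}$ is obtained by transporting the one just derived via \eqref{sym_tildePhi_(minus)_sigma_i}, using $\sigma_2 N\sigma_2=-\left(\begin{smallmatrix}1 & -1\\ 1 & -1\end{smallmatrix}\right)$ together with the fact that $\sigma_2\left(\begin{smallmatrix}a & b\\ b & a\end{smallmatrix}\right)\sigma_2=\left(\begin{smallmatrix}a & -b\\ -b & a\end{smallmatrix}\right)$, to reproduce the claimed form with the same real coefficients $\alpha_j,a_j,b_j$. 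The main obstacle is the step between the local expansion of $\hat U_j$ and the assertion that $\tilde\Phi_j$ admits a Laurent series whose principal part is a \emph{simple} $\omega_j^+$-pole: a priori the Neumann iteration could produce higher-order poles, and it is exactly the nilpotency $N^2=0$ that is needed to rule them out.
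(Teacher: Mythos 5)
Your overall strategy is viable and your symmetry bookkeeping at the end agrees with the paper, but there is a genuine gap in the central step. The nilpotency $N^2=0$ only kills the pole of maximal order $(\omega_j^+)^{-n}$ in the $n$-th Neumann iterate, i.e.\ the terms in which the $N$-factors sit adjacent to one another. Already in the third iterate you encounter contributions of the form $\frac{N}{\omega_j^+}\,B\,\frac{N}{\omega_j^+}$, where $B$ is the $\ord(1)$ part of the conjugated kernel; since $N=vw^{T}$ with $v=(1,-1)^{T}$, $w=(1,1)^{T}$, one has $NBN=(w^{T}Bv)\,N$, so these are a priori \emph{double} poles that $N^2=0$ does not remove. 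To close the argument you would additionally need: (i) the Laurent expansion of $\hat U_j$ in powers of $\omega_j^+$ contains only odd powers (both $1/k_j$ and $\lambda/k_j$ are $(\omega_j^+)^{-1}$ times even functions of $\omega_j^+$), so the $\ord(1)$ part of the conjugated kernel arises solely from the commutator of the singular part with $\sigma_3$ and is proportional to $\sigma_1$; and (ii) $\sigma_1N=-N$, hence $N\sigma_1N=0$. Only with both facts does an induction show that every gap between consecutive $N$-factors costs at least one power of $\omega_j^+$, bounding the pole order of each iterate by one — and even then one still needs uniform estimates near the branch point to sum the series and control the $\ord(\omega_j^+)$ remainder.

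The paper sidesteps all of this with a cleaner device: it sets $\tilde\Phi_j=W^{+}\tilde{\tilde\Phi}_j$ with the singular matrix $W^{+}=\left(\begin{smallmatrix}1&\ii/\omega_j^+\\1&-\ii/\omega_j^+\end{smallmatrix}\right)$ and checks that the transformed Volterra equation for $\tilde{\tilde\Phi}_j$ has a kernel regular at $\lambda=\tfrac{1}{A_j}$, so $\tilde{\tilde\Phi}_j$ is regular there by standard Volterra theory and the claimed structure of $\tilde\Phi_j$ (simple $\omega_j^+$-pole with coefficient having columns proportional to $v$) falls out upon multiplying back by $W^{+}$; the point $-\tfrac{1}{A_j}$ is handled with the analogous $W^{-}$. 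I would either adopt that conjugation or supply the two missing algebraic facts above. Your subsequent use of \eqref{sym_tildePhi_barbar_Sigma_i}, \eqref{sym_tildePhi_bar_sigma_i}, \eqref{sym_tildePhi_minus_sigma_i} and \eqref{sym_tildePhi_(minus)_sigma_i} to force $P_j=\ii\alpha_jN$ with $\alpha_j$ real, the symmetric real form of $R_j$, and the transported expansion at $-\tfrac{1}{A_j}$ is correct and essentially coincides with the paper's argument.
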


\begin{proof}
Recall that $\omega_j^+(\lambda)=\sqrt{\lambda-\frac{1}{A_j}}$ with a branch cut on $[\frac{1}{A_j},\infty)$ and $\omega_j^+(0)=\frac{\ii}{\sqrt{A_j}}$, and $\omega_j^-(\lambda)=\sqrt{\lambda+\frac{1}{A_j}}$ with a branch cut on $(-\infty,-\frac{1}{A_j}]$ and $\omega_j^-(0)=\frac{1}{\sqrt{A_j}}$.

First, consider the behavior of the eigenfunctions near $\frac{1}{A_j}$.
Introduce $\tilde{\tilde\Phi}_j(x,t,\lambda)$ such that $\tilde\Phi_j(x,t,\lambda)=W^+\tilde{\tilde\Phi}_j(x,t,\lambda)$ with $W^+=\begin{pmatrix}
1&\frac{\ii}{\omega_j^+(\lambda)}\\
1&-\frac{\ii}{\omega_j^+(\lambda)}
\end{pmatrix}$. Then  $\tilde{\tilde\Phi}_j(x,t,\lambda)$
solves the following integral equation:
\[\tilde{\tilde\Phi}_j(x,t,\lambda)=\frac{1}{2}\begin{pmatrix}
1&1\\
-\ii\omega_j^+(\lambda)&\ii\omega_j^+(\lambda)
\end{pmatrix} + \int_{(-1)^i\infty}^x A^{-1} \eul^{\frac{\ii}{2}k_j(\lambda)\int_x^\xi m d\tau\sigma_3}\hat U_j A \tilde{\tilde\Phi}_j \eul^{-\frac{\ii}{2}k_j(\lambda)\int_x^\xi m d\tau\sigma_3}.\]
The kernel of this equation and hence $\tilde{\tilde\Phi}_j$ has no singularity at $\frac{1}{A_j}$.
Hence 
\[\tilde\Phi_j(x,t,\lambda)=\frac{\ii}{\omega_j^+(\lambda)}\begin{pmatrix}
\tilde{\tilde c}_j&\tilde{\tilde d}_j\\-\tilde{\tilde c}_j&-\tilde{\tilde d}_j
\end{pmatrix}+\begin{pmatrix}
a_j&b_j\\c_j&d_j
\end{pmatrix}+\ord\left(\sqrt{\lambda-\frac{1}{A_j}}\right),\quad \lambda\to\frac{1}{A_j}.\]
Using \eqref{sym_tildePhi_barbar_Sigma_i}, we get $\tilde{\tilde c}_j$, $\tilde{\tilde d}_j\in\mathbb{R}$ and $a_j,~b_j,~c_j,~d_j\in\mathbb{R}$. Then,
using \eqref{sym_tildePhi_bar_sigma_i}, we get $\tilde{\tilde c}_j=\tilde{\tilde d}_j$ and $a_j=d_j,~c_j=b_j$; thus
\[\tilde\Phi_j(x,t,\lambda)=\frac{\ii\alpha_j(x,t)}{\omega_j^+(\lambda)}\begin{pmatrix}
1&1\\-1&-1
\end{pmatrix}+\begin{pmatrix}
a_j&b_j\\b_j&a_j
\end{pmatrix}+\ord\left(\sqrt{\lambda-\frac{1}{A_j}}\right),\quad \lambda\to\frac{1}{A_j}.\]

In order to get the simiular result for $-\frac{1}{A_j}$, we use $W^-=\begin{pmatrix}
\frac{\ii}{\omega_j^-(\lambda)}&1\\
\frac{\ii}{\omega_j^-(\lambda)}&-1
\end{pmatrix}$ instead of $W^+$, which leads to 
\[\tilde\Phi_j(x,t,\lambda)=\frac{\beta_j(x,t)}{\omega_j^-(\lambda)}\begin{pmatrix}
-1&1\\-1&1
\end{pmatrix}+\begin{pmatrix}
\hat a_j&\hat b_j\\\hat b_j&\hat a_j
\end{pmatrix}+\ord(\sqrt{\lambda+\frac{1}{A_j}}),\quad \lambda\to-\frac{1}{A_j}.\]

Finally, using \eqref{sym_tildePhi_minus_sigma_i} and \eqref{sym_tildePhi_(minus)_sigma_i}, we get $\alpha_j=-\beta_j$ and $a_j=\hat a_j$ and $b_j=-\hat b_j$.
\end{proof}

Evaluating  $D_j^{-1}(\lambda)$ near $\pm \frac{1}{A_j}$ gives
\begin{proposition}\label{prop:sing_beh_D}
$D_j^{-1}(\lambda)$ has the following behaviour at the branch points:
\[D_j^{-1}(\lambda)=\frac{\eul^{\frac{3\pi\ii}{4}}}{(2A_j)^{\frac{1}{4}}\nu_j^+(\lambda)}\begin{pmatrix}
1&1\\1&1
\end{pmatrix}+\frac{\ii \eul^{\frac{3\pi\ii}{4}}(2A_j)^{\frac{1}{4}}\nu_j^+(\lambda)}{2}\begin{pmatrix}
1&-1\\-1&1
\end{pmatrix}+\ord((\lambda-\frac{1}{A_j})^{\frac{3}{4}}),\quad \lambda\to\frac{1}{A_j}\]
and 
\[D_j^{-1}(\lambda)=\frac{\ii}{(2A_j)^{\frac{1}{4}}\nu_j^-(\lambda)}\begin{pmatrix}
-1&1\\1&-1
\end{pmatrix}+\frac{\ii (2A_j)^{\frac{1}{4}}\nu_j^-(\lambda)}{2}\begin{pmatrix}
1&1\\1&1
\end{pmatrix}+\ord((\lambda+\frac{1}{A_j})^{\frac{3}{4}}),\quad \lambda\to-\frac{1}{A_j}.\]
Here $\nu_j^+(\lambda)=(\lambda-\frac{1}{A_j})^{\frac{1}{4}}$ with the branch cut  $(\frac{1}{A_j},\infty)$ and $\nu_j^+(0)=\frac{\eul^{\frac{\pi\ii}{4}}}{(A_j)^{\frac{1}{4}}}$,
and $\nu_j^-(\lambda)=(\lambda+\frac{1}{A_j})^{\frac{1}{4}}$ with the branch cut  $(-\infty,-\frac{1}{A_j})$ and $\nu_j^-(0)=\frac{1}{(A_j)^{\frac{1}{4}}}$
(observe that $(\nu_j^\pm(\lambda))^2=\omega_j^\pm(\lambda)$).
\end{proposition}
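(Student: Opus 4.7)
The plan is to carry out a direct Taylor/Laurent expansion of the explicit formula
\[
D^{-1}_j(\lambda)=\sqrt{\tfrac{1}{2}}\sqrt{\tfrac{1}{\ii A_jk_j(\lambda)}-1}\begin{pmatrix}\frac{\lambda A_j}{1-\ii A_jk_j(\lambda)} & 1 \\ 1 & \frac{\lambda A_j}{1-\ii A_jk_j(\lambda)}\end{pmatrix}
\]
around the two branch points, using the factorization $k_j(\lambda)=\omega_j^+(\lambda)\omega_j^-(\lambda)$ and the relation $(\nu_j^\pm(\lambda))^2=\omega_j^\pm(\lambda)$ to separate the two-valued part from the analytic part.

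First, I would focus on $\lambda\to\tfrac{1}{A_j}$. Here $\omega_j^+(\lambda)\to 0$ while $\omega_j^-$ is analytic and nonzero with $\omega_j^-(\tfrac{1}{A_j})=\sqrt{2/A_j}$. Hence $k_j(\lambda)=\omega_j^+(\lambda)\sqrt{2/A_j}\,(1+\ord(\lambda-\tfrac{1}{A_j}))$, so the factor $\tfrac{1}{\ii A_jk_j(\lambda)}-1$ blows up like $\frac{1}{\ii A_j\omega_j^+(\lambda)\sqrt{2/A_j}}$. Taking the square root and pulling out $\omega_j^+(\lambda)=(\nu_j^+(\lambda))^2$, I get
\[
\sqrt{\tfrac{1}{2}}\sqrt{\tfrac{1}{\ii A_jk_j(\lambda)}-1}=\frac{1}{(2A_j)^{1/4}\nu_j^+(\lambda)}\cdot c_j+O(\nu_j^+(\lambda))
\]
for some constant phase $c_j$, which must be computed carefully using the prescribed branch convention ($\sqrt{\,\cdot\,}$ cut along $[0,\infty)$ with $\sqrt{-1}=\ii$, and $\nu_j^+(0)=\eul^{\ii\pi/4}/A_j^{1/4}$). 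A short check against $\lambda=0$ shows $c_j=\eul^{3\pi\ii/4}$. Next I would expand the matrix factor: since $1-\ii A_jk_j(\lambda)\to 1$ and $\lambda A_j\to 1$, the off-diagonal entries remain $1$ to leading order while the diagonal entries expand as $1-\ii A_j\omega_j^+\sqrt{2/A_j}+\ord(\omega_j^+)^2$. Multiplying the singular scalar prefactor by the matrix and grouping the $\ord(1/\nu_j^+)$ and $\ord(\nu_j^+)$ contributions (the constant piece in the diagonal entries pairs with the singular prefactor to produce a rank-one $\ord(1/\nu_j^+)$ term, while the linear-in-$\omega_j^+$ correction to the diagonal contributes at order $\nu_j^+$) yields exactly the stated expansion at $+1/A_j$.

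The expansion at $\lambda\to-\tfrac{1}{A_j}$ is entirely analogous, but now $\omega_j^-(\lambda)\to 0$ while $\omega_j^+(-\tfrac{1}{A_j})=\ii\sqrt{2/A_j}$ (this imaginary value is what changes the overall phase from $\eul^{3\pi\ii/4}$ to $\ii$ in the final formula). A parallel computation, again pinning down the phase by tracking the branches through $\lambda=0$ and the prescribed value $\nu_j^-(0)=1/A_j^{1/4}$, gives the second asserted expansion. As a consistency check I would verify compatibility with the symmetry $D_j^{-1}(-\lambda)=-\sigma_3 D_j^{-1}(\lambda)\sigma_3$ (used earlier in the paper), which swaps the two branch points and exchanges the two expansions up to the expected sign and conjugation.

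The main obstacle is purely bookkeeping of branch phases: the prefactor $\sqrt{\tfrac{1}{\ii A_jk_j(\lambda)}-1}$ involves nested square roots of quantities whose arguments rotate as $\lambda$ crosses the real axis, and one must take the branches dictated by the convention fixed in Proposition following \eqref{D-j}. This is where the phases $\eul^{3\pi\ii/4}$ and $\ii$ in the leading terms, as well as the $\ii$ appearing in front of the subleading terms, originate; all the rest is a routine Taylor expansion.
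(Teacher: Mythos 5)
Your approach --- a direct expansion of the explicit formula \eqref{D-j} for $D_j^{-1}$ using the factorization $k_j=\omega_j^+\omega_j^-$ and the relation $(\nu_j^\pm)^2=\omega_j^\pm$, with the branch phases pinned down by continuity from $\lambda=0$ --- is precisely the intended argument: the paper offers no proof beyond the phrase ``Evaluating $D_j^{-1}(\lambda)$ near $\pm\frac{1}{A_j}$ gives'', so your plan is the same computation, and your cross-check via the symmetry $D_j^{-1}(-\lambda)=-\sigma_3D_j^{-1}(\lambda)\sigma_3$ is a sensible way to get the second expansion from the first.

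One concrete step does not check out, however. You assert that $\sqrt{\tfrac12}\sqrt{\tfrac{1}{\ii A_jk_j(\lambda)}-1}=\frac{c_j}{(2A_j)^{1/4}\nu_j^+(\lambda)}+\ord(\nu_j^+(\lambda))$ with a \emph{unimodular} constant $c_j=\eul^{3\pi\ii/4}$. Carrying out your own expansion: for $\lambda\in(0,\frac{1}{A_j})$ one has $\ii A_jk_j(\lambda)=-\sqrt{2A_j}\,\epsilon^{1/2}(1+\ord(\epsilon^{1/2}))$ with $\epsilon=\frac{1}{A_j}-\lambda$, so $\frac{1}{\ii A_jk_j}-1$ is large negative real and, with the prescribed branch, $\sqrt{\tfrac12}\sqrt{\tfrac{1}{\ii A_jk_j}-1}=\frac{\ii}{\sqrt2\,(2A_j)^{1/4}\epsilon^{1/4}}(1+\ord(\epsilon^{1/2}))=\frac{\eul^{3\pi\ii/4}}{\sqrt2\,(2A_j)^{1/4}\nu_j^+(\lambda)}(1+\ord(\epsilon^{1/2}))$, i.e.\ $c_j=\eul^{3\pi\ii/4}/\sqrt2$: the normalization factor $\sqrt{1/2}$ survives into the leading coefficient, and the same extra $1/\sqrt2$ propagates to the subleading term and to the expansion at $-\frac{1}{A_j}$. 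An independent consistency check confirms this: $\det D_j^{-1}\equiv1$, whereas the determinant of a two-term truncation $\alpha\left(\begin{smallmatrix}1&1\\1&1\end{smallmatrix}\right)+\beta\left(\begin{smallmatrix}1&-1\\-1&1\end{smallmatrix}\right)$ equals $4\alpha\beta$, which with the coefficients as you (and the Proposition) state them is $2$ rather than $1+\ord(\sqrt\epsilon)$. A check ``against $\lambda=0$'' can only fix the phase, not the modulus, since $\lambda=0$ is far from the branch point; so either redo the constant or note explicitly that the printed statement appears to be missing a factor $1/\sqrt2$ in both displayed terms. Everything else in your argument (the rank-one matrix structures, the sign changes at $-\frac{1}{A_j}$, the order $\ord((\lambda\mp\frac{1}{A_j})^{3/4})$ of the remainder) is correct.
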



\section{Riemann--Hilbert problems}\label{sec:3}
\subsection{RH problem parametrized by $\BS{(x,t)}$}

\begin{notations*}
We  denote
\begin{equation}\label{rho}
    \rho(\lambda):=\frac{s_{21}(\lambda_+)}{s_{11}(\lambda_+)},\quad\lambda\in\dot\Sigma_1\cup\dot\Sigma_0.
\end{equation}

Observe that Corollary \ref{cor: sym} implies that 
\begin{subequations}\label{prop-rho}
\begin{alignat}{4}\label{prop-rho-a}
|\rho(\lambda)|\leq 1,\quad\lambda\in\dot\Sigma_1,\\\label{prop-rho-b}
|\rho(\lambda)|= 1,\quad\lambda\in\dot\Sigma_0.
\end{alignat}

\end{subequations}
\end{notations*}

Motivated by the analytic properties of eigenfunctions and scattering coefficients, we introduce the matrix-values function

\begin{subequations}\label{M}
\begin{equation}\label{M-a}
M(x,t,\lambda)= \left( \frac{(D_1^{-1}\tilde\Phi_1^{(1)})(x,t,\lambda)}{s_{11}(\lambda)\eul^{p_1(x,t,\lambda)-p_2(x,t,\lambda)}},(D_2^{-1}\tilde\Phi_2^{(2)})(x,t,\lambda)\right),\qquad \lambda\in \mathbb{C}\setminus\Sigma_2,
\end{equation}
meromorphic in $\mathbb{C}\setminus\Sigma_2$,
where $p_j$, $j=1,2$ are defined in \eqref{p_i}.

Observe that $D_j^{-1}(\lambda)\tilde\Phi_j(x,t,\lambda)=\Phi_j(x,t,\lambda)\eul^{Q_j(x,t,\lambda)}$ and thus $M(x,t,\lambda)$ can be written as 
\begin{equation}\label{M-b}
M(x,t,\lambda)= \left( \frac{\Phi_1^{(1)}(x,t,\lambda)}{s_{11}(\lambda)},\Phi_2^{(2)}(x,t,\lambda)\right)\eul^{p_2(x,t,\lambda)\sigma_3}.
\end{equation}
\end{subequations}
It follows that $\det M\equiv 1$.
\subsubsection{Jump matrix}

Since $(D_1^{-1}\tilde\Phi_1^{(1)})(\lambda)$ is analytic in $\mathbb{C}\setminus\Sigma_1$, the limiting values $M^\pm$ of $M$ as $\lambda$ approaches $\Sigma_2$ 
from ${\mathbb C}^\pm$ can be expressed as follows:

\[
M^\pm(x,t,\lambda)\coloneqq M(x,t,\lambda_\pm)= \left( \frac{(D_1^{-1}\tilde\Phi_1^{(1)})(x,t,\lambda_\pm)}{s_{11}(\lambda_\pm)\eul^{p_1(x,t,\lambda_\pm)-p_2(x,t,\lambda_\pm)}},(D_2^{-1}\tilde\Phi_2^{(2)})(x,t,\lambda_\pm)\right),\quad \lambda\in\dot\Sigma_1,
\]

\[
M^\pm(x,t,\lambda)\coloneqq M(x,t,\lambda_\pm)= \left( \frac{(D_1^{-1}\tilde\Phi_1^{(1)})(x,t,\lambda)}{s_{11}(\lambda_\pm)\eul^{p_1(x,t,\lambda)-p_2(x,t,\lambda_\pm)}},(D_2^{-1}\tilde\Phi_2^{(2)})(x,t,\lambda_\pm)\right),\quad \lambda\in\dot\Sigma_0.
\]

\begin{proposition}
$M^+$ and $M^-$ are related as follows:
\[M^+(x,t,\lambda) =M^-(x,t,\lambda) J(x,t,\lambda), \quad \lambda\in\dot\Sigma_1\cup\dot\Sigma_0,\]
where
\begin{subequations}\label{jump_}
\begin{alignat}{4}\label{jump_J}
J(x,t,\lambda)=\begin{pmatrix}
0&\ii\\
\ii&0
\end{pmatrix}\begin{pmatrix}
\eul^{-p_2(x,t,\lambda_+)}&0\\
0&\eul^{p_2(x,t,\lambda_+)}
\end{pmatrix}J_0(\lambda)
\begin{pmatrix}
\eul^{p_2(x,t,\lambda_+)}&0\\
0&\eul^{-p_2(x,t,\lambda_+)}
\end{pmatrix}
\end{alignat}
with
\begin{equation}\label{jump_J0}
J_0(\lambda)=\begin{cases} \begin{pmatrix}
1-|\rho(\lambda)|^2&-\overline{\rho(\lambda)}\\
\rho(\lambda)&1
\end{pmatrix}
, \quad \lambda\in\dot\Sigma_1,\\
\begin{pmatrix}
0&-\frac{1}{\rho(\lambda)}\\
\rho(\lambda)&1
\end{pmatrix}, \quad \lambda\in\dot\Sigma_0.
\end{cases}
\end{equation}
\end{subequations}
\end{proposition}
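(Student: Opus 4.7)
The plan is to compute $J=(M^-)^{-1}M^+$ directly from the factorization \eqref{M-b}. Writing $M^\pm=N^\pm\eul^{p_2(\lambda_\pm)\sigma_3}$ with
\[
N^\pm(x,t,\lambda):=\Bigl(\tfrac{\Phi_1^{(1)}(\lambda_\pm)}{s_{11}(\lambda_\pm)},\ \Phi_2^{(2)}(\lambda_\pm)\Bigr),
\]
and noting that $k_2(\lambda_-)=-k_2(\lambda_+)$ forces $p_2(\lambda_-)=-p_2(\lambda_+)$ throughout $\dot\Sigma_2$, I observe that the target formula \eqref{jump_J} is equivalent, via the anticommutation
\[
\eul^{p_2(\lambda_+)\sigma_3}\begin{pmatrix}0&\ii\\\ii&0\end{pmatrix}=\begin{pmatrix}0&\ii\\\ii&0\end{pmatrix}\eul^{-p_2(\lambda_+)\sigma_3},
\]
to the ``bare'' jump identity
\[
(N^-)^{-1}N^+=\begin{pmatrix}0&\ii\\\ii&0\end{pmatrix}J_0(\lambda).
\]
This reduces the problem to finite-dimensional linear algebra on each of the two pieces of the contour.

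On $\dot\Sigma_1$, both $\Phi_1^{(1)}(\lambda_\pm)$ and $\Phi_2^{(j)}(\lambda_\pm)$ are well defined, so I apply the scattering relation \eqref{scat} in the form $N^\pm=\Phi_2(\lambda_\pm)\bigl(\begin{smallmatrix}1&0\\ s_{21}(\lambda_\pm)/s_{11}(\lambda_\pm)&1\end{smallmatrix}\bigr)$. Using the symmetry $\Phi_2(\lambda_-)=-\ii\,\Phi_2(\lambda_+)\sigma_1$ that comes from \eqref{sym_from_Psi_sigma_i}, together with $s_{21}(\lambda_-)/s_{11}(\lambda_-)=\overline{\rho(\lambda)}$ (obtained by combining \eqref{sym_s_(minus)_bar_sigma_1)} and \eqref{sym_s_barbar}), both $N^+$ and $N^-$ are expressed as explicit right factors of the common matrix $\Phi_2(\lambda_+)$. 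Inverting the $N^-$ factor (which has determinant $-1$ and hence a simple closed-form inverse) and multiplying produces $(N^-)^{-1}N^+=\bigl(\begin{smallmatrix}0&\ii\\\ii&0\end{smallmatrix}\bigr)\bigl(\begin{smallmatrix}1-|\rho|^2&-\overline{\rho}\\\rho&1\end{smallmatrix}\bigr)$, matching the first branch of \eqref{jump_J0}.

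On $\dot\Sigma_0$ the situation is subtler: $\Phi_1^{(1)}$ is analytic across the cut (with no $\pm$ distinction), while $s_{11}$ and the columns $\Phi_2^{(j)}$ jump. Starting from the columnwise scattering \eqref{scat-col}, I again obtain $N^\pm=\Phi_2(\lambda_\pm)\bigl(\begin{smallmatrix}1&0\\ s_{21}(\lambda_\pm)/s_{11}(\lambda_\pm)&1\end{smallmatrix}\bigr)$. The symmetries \eqref{sym_from_Psi_sigma_i_} give $\Phi_2^{(1)}(\lambda_-)=-\ii\,\Phi_2^{(2)}(\lambda_+)$ and $\Phi_2^{(2)}(\lambda_-)=-\ii\,\Phi_2^{(1)}(\lambda_+)$, while the key new identity $s_{21}(\lambda_-)/s_{11}(\lambda_-)=1/\rho(\lambda)$ on $\dot\Sigma_0$ follows at once by combining \eqref{sym_s11-b} with \eqref{sym_s11-c}. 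Substituting and simplifying, the bare jump collapses to $(N^-)^{-1}N^+=\bigl(\begin{smallmatrix}0&\ii\\\ii&0\end{smallmatrix}\bigr)\bigl(\begin{smallmatrix}0&-1/\rho\\\rho&1\end{smallmatrix}\bigr)$, matching the second branch of \eqref{jump_J0}. The main obstacle is purely bookkeeping: tracking the $\pm\ii$ prefactors produced by the various symmetry relations and ensuring they reassemble correctly with the conjugation by $\eul^{p_2(\lambda_+)\sigma_3}$ and the anticommutation with $\bigl(\begin{smallmatrix}0&\ii\\\ii&0\end{smallmatrix}\bigr)$; once this is handled consistently, the remainder is elementary $2\times 2$ algebra.
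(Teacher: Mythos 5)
Your proposal is correct and follows essentially the same route as the paper's proof: both express $M^\pm$ over the common basis $\Phi_2(\lambda_+)$ via the scattering relations \eqref{scat}, \eqref{scat-col} and the $\lambda_-\!\leftrightarrow\!\lambda_+$ symmetries (your \eqref{sym_from_Psi_sigma_i}, \eqref{sym_s_barbar}, \eqref{sym_s11-b}--\eqref{sym_s11-c} play the role of the paper's \eqref{sym_D-tilde-phi}), and then read off the connection matrix; you merely factor out $\eul^{p_2\sigma_3}$ at the outset and work in the $\Phi$ gauge instead of carrying the $\tilde s_{ij}$ through the computation. The only blemish is the parenthetical claim that the $N^-$ factor has determinant $-1$ (in fact $\det N^\pm=\det\Phi_2=1$), which is harmless and does not affect the argument.
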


\begin{proof}
\textit{(i) $\lambda\in\dot\Sigma_1$.}
Considering \eqref{scat_2} columnwise, rearranging the columns and using \eqref{sym_D-tilde-phi-a} for $\lambda\in\dot\Sigma_1$, we obtain
\begin{equation}\label{jump-sigma-1}
M^+(x,t,\lambda) =M^-(x,t,\lambda)\ii\begin{pmatrix}
\frac{\tilde s_{21}(x,t,\lambda_+)\tilde s_{11}(x,t,\lambda_-)}{\tilde s_{11}(x,t,\lambda_+)\tilde s_{22}(x,t,\lambda_+)}&\frac{\tilde s_{11}(x,t,\lambda_-)}{\tilde s_{22}(x,t,\lambda_+)}\\
1-\frac{\tilde s_{21}(x,t,\lambda_+)\tilde s_{12}(x,t,\lambda_+)}{\tilde s_{11}(x,t,\lambda_+)\tilde s_{22}(x,t,\lambda_+)}&-\frac{\tilde s_{12}(x,t,\lambda_+)}{\tilde s_{22}(x,t,\lambda_+)}
\end{pmatrix}.
\end{equation}
Since $\eul^{p_1(x,t,\lambda_-)-p_2(x,t,\lambda_-)}=\eul^{p_2(x,t,\lambda_+)-p_1(x,t,\lambda_+)}$, from \eqref{sym_s_barbar} and \eqref{sym_s_bar_sigma1-a} we have $\frac{\tilde s_{11}(\lambda_-)}{\tilde s_{22}(\lambda_+)}=\frac{ s_{11}(\lambda_-)}{ s_{22}(\lambda_+)}=1$.
Moreover, using the definition \eqref{rho} of $\rho(\lambda)$ and \eqref{sym_s_bar_sigma1}, we have $\overline{\rho(\lambda)}=\frac{s_{12}(\lambda_+)}{s_{22}(\lambda_+)}$. Hence we can rewrite the jump condition \eqref{jump-sigma-1} 
as \eqref{jump_J}  with \eqref{jump_J0}.

\textit{(ii) $\lambda\in\dot\Sigma_0$.}
Considering  \eqref{scat-col_} columnwise, rearranging the columns and using \eqref{sym_D-tilde-phi-b} and \eqref{sym_D-tilde-phi-c} for $\lambda_+\in\dot\Sigma_0$, we obtain
\begin{equation}\label{jump-sigma-0}
M^+(x,t,\lambda) =M^-(x,t,\lambda)\ii\begin{pmatrix}
\frac{\tilde s_{21}(x,t,\lambda_+)}{\tilde s_{11}(x,t,\lambda_+)}&1\\
0&-\frac{\tilde s_{11}(x,t,\lambda_+)}{\tilde s_{21}(x,t,\lambda_+)}
\end{pmatrix}.
\end{equation}

Then, using the definition of $\rho(\lambda)$ together with  \eqref{sym_s11-c} and \eqref{sym_s11-b}, we can rewrite the jump condition \eqref{jump-sigma-0} as \eqref{jump_J} with \eqref{jump_J0}.

\end{proof}

\begin{remark}
Notice that
      \begin{equation}\label{det_J}
    \det J \equiv 1
    \end{equation}
    and that $J_0(\lambda)$ (and hence $J$) is continuous at $\pm\frac{1}{A_1}$ if $|\rho(\pm\frac{1}{A_1})|=1$ and $\rho(\pm\frac{1}{A_1}+0)=\rho(\pm\frac{1}{A_1}-0)$, and discontinuous otherwise.
\end{remark}


\subsubsection{Normalization condition at $\lambda\to\infty$.}
\begin{proposition}
As $\lambda\to\infty$:
\begin{equation}\label{M-norm}
M(x,t,\lambda)=\begin{cases}\sqrt{\frac{1}{2}}
\begin{pmatrix}
-1&\ii\\
\ii&-1
\end{pmatrix}+ \ord(\frac{1}{\lambda}), \quad \lambda\to\infty,~ \lambda\in\mathbb{C}^+,\\
\sqrt{\frac{1}{2}}
\begin{pmatrix}
1&\ii\\
\ii&1
\end{pmatrix}+ \ord(\frac{1}{\lambda}), \quad \lambda\to\infty,~ \lambda\in\mathbb{C}^-.
\end{cases}
\end{equation}
\end{proposition}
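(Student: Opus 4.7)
My plan is to analyze each column of $M$ in the representation \eqref{M-a} separately, by determining the large-$\lambda$ asymptotics of the three building blocks $D_j^{-1}(\lambda)$, $\tilde\Phi_j^{(j)}(x,t,\lambda)$, and the scalar factor $s_{11}(\lambda)\eul^{p_1-p_2}$. The second column $M^{(2)}=(D_2^{-1}\tilde\Phi_2^{(2)})(x,t,\lambda)$ is the simpler one: since $\tilde\Phi_2^{(2)}\to(0,1)^{T}$ as $\lambda\to\infty$ (already observed from the Neumann expansion of \eqref{eq}), its limit reduces to the second column of $D_2^{-1}(\infty)$.

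To identify $D_j^{-1}(\infty)$ I first establish that $k_j(\lambda)\sim\lambda$ as $\lambda\to\infty$ in $\mathbb{C}^+$ and $k_j(\lambda)\sim-\lambda$ in $\mathbb{C}^-$; this follows from the branch condition $k_j(0)=\ii/A_j$ together with $\Im k_j\ge 0$ (equivalently the symmetry \eqref{sym_k_i-c}). Substituting these asymptotics into the definition \eqref{D-j} of $D_j^{-1}$, and using the prescribed outer branch $\sqrt{-1}=\ii$, the $A_j$-dependence drops out at leading order, yielding the same half-plane-dependent limit for both $j=1,2$:
\[
D_j^{-1}(\lambda)\;\longrightarrow\;\begin{cases}\sqrt{\tfrac{1}{2}}\begin{pmatrix}-1&\ii\\\ii&-1\end{pmatrix},&\lambda\in\mathbb{C}^+,\\[1ex]\sqrt{\tfrac{1}{2}}\begin{pmatrix}1&\ii\\\ii&1\end{pmatrix},&\lambda\in\mathbb{C}^-.\end{cases}
\]
The contrast between the two half-planes comes solely from the factor $\frac{\lambda A_j}{1-\ii A_jk_j(\lambda)}$, which tends to $\ii$ in $\mathbb{C}^+$ and to $-\ii$ in $\mathbb{C}^-$.

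For the first column I will recast the scalar factor $s_{11}(\lambda)\eul^{p_1-p_2}$ as a determinant. Using $\Phi_j=D_j^{-1}\tilde\Phi_j\eul^{-Q_j}$ together with \eqref{scatcoeff-a}, one obtains the identity
\[
s_{11}(\lambda)\,\eul^{p_1(x,t,\lambda)-p_2(x,t,\lambda)}=\det\!\left((D_1^{-1}\tilde\Phi_1^{(1)})(x,t,\lambda),\;(D_2^{-1}\tilde\Phi_2^{(2)})(x,t,\lambda)\right).
\]
As $\lambda\to\infty$, the two columns on the right tend respectively to the first column of $D_1^{-1}(\infty)$ and the second column of $D_2^{-1}(\infty)$ identified in paragraph two; hence the right-hand side converges to a determinant equal to $1$ (since $\det D_j^{-1}\equiv 1$). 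This forces $M^{(1)}=(D_1^{-1}\tilde\Phi_1^{(1)})/(s_{11}\eul^{p_1-p_2})$ to tend to the first column of the same limiting matrix, and combining with the second-column analysis yields \eqref{M-norm}.

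The only real technical obstacle is the branch bookkeeping in the second paragraph: I need to verify that $\frac{1}{\ii A_jk_j(\lambda)}-1$ approaches $-1$ from within the domain of analyticity of the outer square root (i.e.\ avoiding the cut $[0,\infty)$) in each half-plane, and to pin down the sign of $\frac{\lambda A_j}{1-\ii A_jk_j(\lambda)}$ in each case. Once this is done, the remaining steps are purely algebraic.
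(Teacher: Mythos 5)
Your proposal is correct and follows essentially the same route as the paper's proof: expand $D_j^{-1}(\lambda)$ at $\lambda=\infty$ in each half-plane, use $\tilde\Phi_j^{(j)}\to e_j$ from the Neumann series, and identify the normalizing scalar $s_{11}\eul^{p_1-p_2}=\tilde s_{11}$ with the determinant \eqref{til_scatcoeff-a}, which then tends to $\det D^{-1}(\infty)=1$. The branch bookkeeping you flag is indeed the only point requiring care, and it is harmless since $\frac{1}{\ii A_jk_j(\lambda)}-1\to-1$, an interior point of the cut plane $\mathbb{C}\setminus[0,\infty)$.
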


\begin{proof}
 Expanding $D_j^{-1}(\lambda)$ \eqref{D-j} as $\lambda\to\infty$, we get  \[D_j^{-1}(\lambda)=\begin{cases}\sqrt{\frac{1}{2}}
\begin{pmatrix}
-1&\ii\\
\ii&-1
\end{pmatrix}+\ord(\frac{1}{\lambda}), \quad \lambda\to\infty,~ \lambda\in\mathbb{C}^+,\\
\sqrt{\frac{1}{2}}
\begin{pmatrix}
1&\ii\\
\ii&1
\end{pmatrix}+\ord(\frac{1}{\lambda}), \quad \lambda\to\infty,~ \lambda\in\mathbb{C}^-.
\end{cases}\]
Recalling that $\left(\begin{smallmatrix}
\widetilde\Phi_1^{(1)} &
\widetilde\Phi_2^{(2)}\end{smallmatrix}\right)\to I$ as $\lambda\to\infty$,
we have, for $\lambda\in\mathbb{C}^+$, 
\[(D_1^{-1}\tilde\Phi_1^{(1)})(\lambda)=\sqrt{\frac{1}{2}}\begin{pmatrix}
-1\\\ii
\end{pmatrix}+\ord(\frac{1}{\lambda}), \quad \lambda\to\infty,\]
\[(D_2^{-1}\tilde\Phi_2^{(2)})(\lambda)=\sqrt{\frac{1}{2}}\begin{pmatrix}
\ii\\-1
\end{pmatrix}+\ord(\frac{1}{\lambda}), \quad \lambda\to\infty.\]
Substituting this into \eqref{til_scatcoeff-a}, we get $\tilde s_{11}(\lambda)=1+\ord(\frac{1}{\lambda}),~\lambda\to\infty.$ 

Similarly, for  $\lambda\in\mathbb{C}^-$ we have
\[(D_1^{-1}\tilde\Phi_1^{(1)})(\lambda)=\sqrt{\frac{1}{2}}\begin{pmatrix}
1\\\ii
\end{pmatrix}+\ord(\frac{1}{\lambda}), \quad \lambda\to\infty,\]
\[(D_2^{-1}\tilde\Phi_2^{(2)})(\lambda)=\sqrt{\frac{1}{2}}\begin{pmatrix}
\ii\\1
\end{pmatrix}+\ord(\frac{1}{\lambda}), \quad \lambda\to\infty,\]
and $\tilde s_{11}(\lambda)=1+\ord(\frac{1}{\lambda}),~\lambda\to\infty$.
Then the claim follows.
\end{proof}

\begin{remark}\label{rem_3.5.}
  In order to have a standard normalisation as $\lambda\to\infty$, we can introduce 
  \begin{equation}\label{tilde-M}
  \tilde M(x,t,\lambda):=\begin{cases}\sqrt{\frac{1}{2}}
    \begin{pmatrix}
    -1&-\ii\\
    -\ii&-1
    \end{pmatrix}M(x,t,\lambda), \quad \lambda\in\mathbb{C}^+,\\
    \sqrt{\frac{1}{2}}
    \begin{pmatrix}
    -1&-\ii\\
    -\ii&-1
    \end{pmatrix}M(x,t,\lambda)\ii\sigma_1, \quad \lambda\in\mathbb{C}^-.
    \end{cases}
    \end{equation}
    Then we have $\tilde M\to I$ at $\lambda\to\infty$.
    On the other hand, $\tilde M$ acquires an additional  jump 
    across $\lambda\in\mathbb{R}\setminus\Sigma_2$:
        \[
        \tilde M^+(x,t,\lambda) =\tilde M^-(x,t,\lambda) \tilde J(x,t,\lambda),~ \lambda\in\mathbb{R}\setminus\big\{\cup_{j=1,2} \{A_j^{-1}\}\cup \{-A_j^{-1}\}\big\}
        \]
    with
    \begin{equation*}
    \tilde J(x,t,\lambda)=\begin{cases}
    \tilde J_{\Sigma_j}(x,t,\lambda), \quad \lambda\in\dot\Sigma_j,\quad j=0,1\\
        \tilde J_{\mathbb{R}\setminus\Sigma_2}(x,t,\lambda), \quad \lambda\in\mathbb{R}\setminus\Sigma_2,
    \end{cases}
    \end{equation*}
    where $\tilde J_{\Sigma_j}(x,t,\lambda)=\eul^{-p_2(x,t,\lambda_+)\sigma_3}J_0(\lambda)\eul^{p_2(x,t,\lambda_+)\sigma_3}$, $j=0,1$
    and 
    $\tilde J_{\mathbb{R}\setminus\Sigma_2}(x,t,\lambda)=-\ii\sigma_1$.
        \end{remark} 
        \begin{remark}
    Using \eqref{til_scatcoeff-c}, we obtain $\tilde s_{21}(\lambda)=\ord(\frac{1}{\lambda})$ as $\lambda\to\infty$. Notice that $\rho(\lambda)=\frac{s_{21}(\lambda_+)}{s_{11}(\lambda_+)}=\frac{\tilde s_{21}(\lambda_+)}{\tilde  s_{11}(\lambda_+)}\eul^{-2p_2(x,t,\lambda_+)}$; since $p_2(x,t,\lambda_+)$ is purely imaginary for $\lambda\in\Sigma_2$, $\eul^{-2p_2(x,t,\lambda_+)}$ is bounded and thus $\rho(\lambda)=\ord(\frac{1}{\lambda})$ as $\lambda\to\infty$.
Consequently, 
    \[
    J_{0}(\lambda)=\begin{pmatrix}
    1&0\\0&1
    \end{pmatrix}+\ord(\frac{1}{\lambda}),\quad \lambda\to\pm\infty
    \]
and
    \[
    J(x,t,\lambda)=\begin{pmatrix}
    0&\ii\\\ii&0
    \end{pmatrix}+\ord(\frac{1}{\lambda}),\quad \lambda\to\pm\infty.
    \]
\end{remark}    

\subsubsection{Symmetries}
From the symmetry properties of the eigenfunctions and scattering functions
\eqref{sym_minus_}, \eqref{sym_barbar_notin_Sigma_i}, \eqref{sym_minus}, and \eqref{sym_bar_Sigma_i} it follows that 
\begin{subequations}\label{sym-M}
\begin{alignat}{4}
M(-\lambda)&=-\sigma_3 M(\lambda)\sigma_3,\qquad \overline{M(\overline{\lambda})}=-M(\lambda),\qquad \lambda\in\mathbb{C}\setminus\Sigma_2,\\
M((-\lambda)_-)&=-\sigma_3 M(\lambda_+)\sigma_3,\qquad \overline{M(\lambda_-)}=-M(\lambda_+),\qquad \lambda\in\dot\Sigma_1.
\end{alignat}
\end{subequations}
where $M(\lambda)\equiv M(x,t,\lambda)$.
\subsubsection{  Singularities at $\pm\frac{1}{A_j}$.} 

Let $A^{(ij)}$ denote the elements of a $2\times 2$ matrix $A=\begin{pmatrix}
A^{(11)}&A^{(12)}\\A^{(21)}&A^{(22)}
\end{pmatrix}$.


\begin{proposition}\label{prop:sing-M} $M(x,t,\lambda)$ has the following behaviour at the branch points
\begin{equation}\label{sing}
M(x,t,\lambda)=\begin{cases}
\frac{\eul^{\frac{3\pi\ii}{4}}}{\nu_2^+(\lambda)}\begin{pmatrix}
0&\Upsilon_2\\0&\Lambda_2
\end{pmatrix}+\ord(1),\quad \lambda\to\frac{1}{A_2},\\
\frac{\ii}{\nu_2^-(\lambda)}\begin{pmatrix}
0&\Upsilon_2\\0&-\Lambda_2
\end{pmatrix}+\ord(1),\quad \lambda\to-\frac{1}{A_2},\\
\frac{c_+\eul^{\frac{3\pi\ii}{4}}}{\nu_1^+(\lambda)}\begin{pmatrix}
\Upsilon_1&0\\\Lambda_1&0
\end{pmatrix}+\ord(1),\quad \lambda\to\frac{1}{A_1},~ \lambda\in\mathbb{C}_+,\\
\frac{\overline{c_+}\eul^{\frac{3\pi\ii}{4}}}{\nu_1^+(\lambda)}\begin{pmatrix}
\Upsilon_1&0\\\Lambda_1&0
\end{pmatrix}+\ord(1),\quad \lambda\to\frac{1}{A_1},~ \lambda\in\mathbb{C}_-,\\
\frac{\overline{c_+}\ii}{\nu_1^-(\lambda)}\begin{pmatrix}
-\Upsilon_1&0\\\Lambda_1&0
\end{pmatrix}+\ord(1),\quad \lambda\to-\frac{1}{A_1},~ \lambda\in\mathbb{C}_+,\\
\frac{c_+\ii}{\nu_1^-(\lambda)}\begin{pmatrix}
-\Upsilon_1&0\\\Lambda_1&0
\end{pmatrix}+\ord(1),\quad \lambda\to-\frac{1}{A_1},~ \lambda\in\mathbb{C}_-,
\end{cases}
\end{equation}
where $\nu_j^\pm(\lambda)$ are defined in Proposition \ref{prop:sing_beh_D}, and $\Upsilon_j=-(2A_j)^{\frac{1}{4}}\alpha_j(x,t)+\frac{(a_j(x,t)+b_j(x,t))}{(2A_j)^{\frac{1}{4}}}$, $\Lambda_j=(2A_j)^{\frac{1}{4}}\alpha_j(x,t)+\frac{(a_j(x,t)+b_j(x,t))}{(2A_j)^{\frac{1}{4}}}$ with $\alpha_j(x,t),~a_j(x,t),~b_j(x,t)\in\D{R}$, $j=1,2$ as in Proposition \ref{prop:sing_beh}.

Moreover, $c_+(x,t)=0$ if $\beta_1(x,t)\neq 0$ and $c_+(x,t)=\frac{1}{\tilde s_{11}(x,t,\frac{1}{A_2})}$ if $\beta_1(x,t)= 0$, where $\beta_1(x,t)$ is defined in \eqref{beta_1}.
\end{proposition}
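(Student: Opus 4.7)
The plan is to derive \eqref{sing} by combining the expansions of $\tilde\Phi_j$ from Proposition~\ref{prop:sing_beh} with those of $D_j^{-1}$ from Proposition~\ref{prop:sing_beh_D}, and then analyzing the behaviour of $s_{11}(\lambda)$ near the branch points. I would treat each column of $M$ separately and each branch point in turn.

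For the second column near $\pm\frac{1}{A_2}$, where it equals $D_2^{-1}(\lambda)\tilde\Phi_2^{(2)}(x,t,\lambda)$, I multiply the two expansions directly. The crucial cancellations
\begin{equation*}
\begin{pmatrix}1 & 1\\1 & 1\end{pmatrix}\begin{pmatrix}1\\-1\end{pmatrix}=\begin{pmatrix}0\\0\end{pmatrix},\qquad
\begin{pmatrix}-1 & 1\\1 & -1\end{pmatrix}\begin{pmatrix}-1\\-1\end{pmatrix}=\begin{pmatrix}0\\0\end{pmatrix}
\end{equation*}
kill the a priori worst $1/(\omega_2^\pm\nu_2^\pm)$ singularity (both expansions have rank-one leading parts whose images are orthogonal to the other's kernel). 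The remaining $1/\nu_2^\pm$ contribution arises from the two cross terms (leading part of one expansion against the $O(1)$ part of the other). Collecting them yields exactly $\frac{\eul^{3\pi\ii/4}}{\nu_2^+}(\Upsilon_2,\Lambda_2)^T$ and its analogue at $-\frac{1}{A_2}$, with $\Upsilon_2,\Lambda_2$ as defined in the statement. For the first column near $\pm\frac{1}{A_2}$, the ingredients $D_1^{-1}$ and $\tilde\Phi_1^{(1)}$ are analytic there (it is not a branch point of $k_1$), the factor $\eul^{p_1-p_2}$ is bounded since $k_2(\pm\frac{1}{A_2})=0$ and $k_1(\pm\frac{1}{A_2})$ is finite, and $s_{11}(\pm\frac{1}{A_2})\ne 0$ by the standing assumption from Subsection~\ref{sec:discrete spectrum}; hence this column is $O(1)$, giving the zero columns displayed in the first two cases of \eqref{sing}.

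The four cases near $\pm\frac{1}{A_1}$ are dual: now $D_2^{-1}\tilde\Phi_2^{(2)}$ is analytic and everything happens in the first column. The same rank-one cancellation with $j=1$ gives
\begin{equation*}
(D_1^{-1}\tilde\Phi_1^{(1)})(x,t,\lambda)=\frac{\eul^{3\pi\ii/4}}{\nu_1^+(\lambda)}\begin{pmatrix}\Upsilon_1\\\Lambda_1\end{pmatrix}+O(1),\qquad \lambda\to\tfrac{1}{A_1},
\end{equation*}
and the analogue at $-\frac{1}{A_1}$. The delicate point is the behaviour of $s_{11}(\lambda)=\det(\Phi_1^{(1)},\Phi_2^{(2)})$, since $\pm\frac{1}{A_1}\in\Sigma_2$ lies on the boundary of analyticity of $s_{11}$. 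Using \eqref{scatcoeff-a} with Proposition~\ref{prop:sing_beh} for $\Phi_1^{(1)}$ and the analyticity of $\Phi_2^{(2)}$ at $\pm\frac{1}{A_1}$, one finds that $s_{11}$ itself generically carries a $1/\nu_1^+$ singularity whose coefficient is precisely the quantity $\beta_1(x,t)$ introduced in \eqref{beta_1}. If $\beta_1\neq 0$ the numerator and denominator singularities cancel, the quotient is $O(1)$, and therefore $c_+=0$; if $\beta_1=0$ the $s_{11}$ factor is bounded and the $1/\nu_1^+$ singularity of $D_1^{-1}\tilde\Phi_1^{(1)}$ survives, with the coefficient $c_+$ determined by matching the constant term of $s_{11}$ to $\tilde s_{11}$, yielding the formula $c_+=1/\tilde s_{11}(x,t,\tfrac{1}{A_2})$.

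The passage between the $\mathbb{C}^+$ and $\mathbb{C}^-$ formulas at $\pm\frac{1}{A_1}$ is controlled by the symmetry $\overline{s_{11}(\overline\lambda)}=s_{11}(\lambda)$ of \eqref{sym_s11_barbar}, which forces $c_+$ to be replaced by $\overline{c_+}$ when the limit is taken from the opposite half-plane, and by the branch structure of $\nu_1^\pm$ across $\Sigma_2$, which accounts for the sign changes of $\Upsilon_1$ in the two $-\frac{1}{A_1}$ cases. I expect the routine rank-one cancellation to be mechanical; the main obstacle is the last step: precisely identifying the coefficient $\beta_1$ as the $1/\nu_1^+$ coefficient of $s_{11}$ at $\frac{1}{A_1}$ and tracking the branch signs of $\nu_1^\pm$ and of the cross-term combinations $\pm\Upsilon_1,\pm\Lambda_1$ that distinguish the four cases.
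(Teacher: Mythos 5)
Your proposal is correct and follows essentially the same route as the paper's proof: multiply the expansions of Propositions \ref{prop:sing_beh} and \ref{prop:sing_beh_D}, observe the rank-one cancellation that kills the worst $(\lambda\mp\frac{1}{A_j})^{-3/4}$ singularity, expand $\tilde s_{11}$ at the branch points via the determinant formula \eqref{til_scatcoeff-a} to identify $\beta_2$ and $\beta_1$ and the resulting dichotomy for $c_+$, and dispatch the remaining cases by the symmetries. The only quibble is that in the non-generic case $\beta_1=0$ the paper's proof yields $c_+=1/\tilde s_{11}(\frac{1}{A_1}_+)$ (the boundary value at $\frac{1}{A_1}$ from the relevant half-plane), so the formula $c_+=1/\tilde s_{11}(x,t,\frac{1}{A_2})$ that you carried over from the proposition's statement appears to be a typo of the paper rather than what the argument actually produces.
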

\begin{proof}
Combining Proposition \ref{prop:sing_beh} with Proposition \ref{prop:sing_beh_D} we get
\begin{align*}
    D_j^{-1}(\lambda)\tilde\Phi_j(x,t,\lambda)&=\frac{\eul^{\frac{3\pi\ii}{4}}}{\nu^+_j(\lambda)}\left(-(2A_j)^{\frac{1}{4}}\alpha_j\begin{pmatrix}
1&1\\-1&-1
\end{pmatrix}+\frac{a_j+b_j}{(2A_j)^{\frac{1}{4}}}\begin{pmatrix}
1&1\\1&1
\end{pmatrix}\right)+\ord\left((\lambda-\frac{1}{A_j})^{1/4}\right)
\end{align*}
as $\lambda\to\frac{1}{A_j}$, where $\alpha_j=\alpha_j(x,t)$, $a_j=a_j(x,t)$ and $b_j=b_j(x,t)$. 

First, consider the behaviour of $M$ near  $\frac{1}{A_2}$.
Since $D_1^{-1}(\lambda)\tilde\Phi_1^{(1)}(x,t,\lambda)$ is analytic at $\frac{1}{A_2}$, we have \[D_1^{-1}(\frac{1}{A_2})\tilde\Phi_1^{(1)}(x,t,\frac{1}{A_2})=\ii\begin{pmatrix}
a(x,t)\\c(x,t)
\end{pmatrix}\] 
with 
\[a(x,t)=\left|\sqrt{\frac{A_2+|\sqrt{A_2^2-A_1^2}|}{|\sqrt{A_2^2-A_1^2}|}}\right|\left(\frac{A_1}{A_2+|\sqrt{A_2^2-A_1^2}|}\tilde\Phi_1^{(11)}(x,t,\frac{1}{A_2})+\tilde\Phi_1^{(21)}(x,t,\frac{1}{A_2})\right)\] 
and 
\[c(x,t)=\left|\sqrt{\frac{A_2+|\sqrt{A_2^2-A_1^2}|}{|\sqrt{A_2^2-A_1^2}|}}\right|\left(\frac{A_1}{A_2+|\sqrt{A_2^2-A_1^2}|}\tilde\Phi_1^{(21)}(x,t,\frac{1}{A_2})+\tilde\Phi_1^{(11)}(x,t,\frac{1}{A_2})\right).\]
Then, using \eqref{til_scatcoeff-a}, we get the following expansion of $\tilde s_{11}(x,t,\lambda)$ at $\frac{1}{A_2}$:
\begin{equation*}
 \tilde s_{11}(x,t,\lambda)=\frac{\ii\eul^{\frac{3\pi\ii}{4}}}{\nu_2^+(\lambda)}\beta_2(x,t)+\ord(1), \quad \lambda\to\frac{1}{A_2}  
\end{equation*}
with $\beta_2(x,t)=\left((2A_2)^{\frac{1}{4}}\alpha_2(x,t)(a(x,t)+c(x,t))+\frac{(a_2(x,t)+b_2(x,t))(a(x,t)-c(x,t))}{(2A_2)^{\frac{1}{4}}}\right)$.

Notice that the symmetry \eqref{sym_tildephi_barbar_notSigma_i} implies that  $\tilde\Phi_1^{(11)}(x,t,\frac{1}{A_2})$ and $\tilde\Phi_1^{(21)}(x,t,\frac{1}{A_2})$ are real-valued and thus $a(x,t)\in\mathbb{R}$ and $c(x,t)\in\mathbb{R}$.

Recall the assumption $s_{11}(\frac{1}{A_2})\neq 0$, which  implies $\tilde s_{11}(\frac{1}{A_2})\neq 0$. Thus there are two possibilities: either $\beta_2(x,t)\neq 0$ or  $\beta_2(x,t)=0$ and $\tilde s_{11}(\frac{1}{A_2})=:\gamma\neq 0$. In the both cases,
\[M(x,t,\lambda)=\frac{\eul^{\frac{3\pi\ii}{4}}}{\nu_2^+(\lambda)}\begin{pmatrix}
0&-(2A_2)^{\frac{1}{4}}\alpha_2(x,t)+\frac{(a_2(x,t)+b_2(x,t))}{(2A_2)^{\frac{1}{4}}}\\0&(2A_2)^{\frac{1}{4}}\alpha_2(x,t)+\frac{(a_2(x,t)+b_2(x,t))}{(2A_2)^{\frac{1}{4}}}
\end{pmatrix}+\ord(1),\quad \lambda\to\frac{1}{A_2}.\]

Now consider the behaviour of $M$ as $\lambda$ approaches $\frac{1}{A_1}$ from the upper half-plane.
Since $D_2^{-1}(\lambda)\tilde\Phi_2^{(2)}(x,t,\lambda)$ has no singularity at $\frac{1}{A_1}$, we have \[D_2^{-1}(\frac{1}{A_1}_+)\tilde\Phi_2^{(2)}(x,t,\frac{1}{A_1}_+)=\begin{pmatrix}
b_+(x,t)\\d_+(x,t)
\end{pmatrix}\]
with  
\[b_+=\left|\sqrt{\frac{-\ii A_1-|\sqrt{A_2^2-A_1^2}|}{|\sqrt{A_2^2-A_1^2}|}}\right|\left(\frac{A_2}{A_1-\ii|\sqrt{A_2^2-A_1^2}|}\tilde\Phi_2^{(12)}(x,t,\frac{1}{A_1}_+)+\tilde\Phi_2^{(22)}(x,t,\frac{1}{A_1}_+)\right)\] 
and
\[d_+=\left|\sqrt{\frac{-\ii A_1-|\sqrt{A_2^2-A_1^2}|}{|\sqrt{A_2^2-A_1^2}|}}\right|\left(\frac{A_2}{A_1-\ii|\sqrt{A_2^2-A_1^2}|}\tilde\Phi_2^{(22)}(x,t,\frac{1}{A_1}_+)+\tilde\Phi_2^{(12)}(x,t,\frac{1}{A_1}_+)\right).\]
Then, using \eqref{til_scatcoeff-a}, we get the following expansion of $\tilde s_{11}(x,t,\lambda)$ at $\frac{1}{A_1}$ in the upper half-plane:
\begin{subequations}\label{s_11_sing}
\begin{equation}\label{s_11_sing_}
 \tilde s_{11}(x,t,\lambda)=\frac{\eul^{\frac{3\pi\ii}{4}}}{\nu_1^+(\lambda)}\beta_1(x,t)+\ord(1), \quad \lambda\to\frac{1}{A_1},\quad \lambda\in\mathbb{C}_+  
\end{equation}
with
\begin{equation}\label{beta_1}
\beta_1(x,t)=-(2A_2)^{\frac{1}{4}}\alpha_1(x,t)(b_+(x,t)+d_+(x,t))+\frac{(a_1(x,t)+b_1(x,t))(d_+(x,t)-b_+(x,t))}{(2A_1)^{\frac{1}{4}}}.    
\end{equation}
\end{subequations}
As above, we have two possibilities: either $\beta_1(x,t)\neq 0$ (generic case) or  $\beta_1(x,t)= 0$ and $\tilde s_{11}(\frac{1}{A_1}_+)=\gamma_1^+\neq 0$. This gives 
\[M(x,t,\lambda)=\frac{c_+\eul^{\frac{3\pi\ii}{4}}}{\nu_1^+(\lambda)}\begin{pmatrix}
-(2A_1)^{\frac{1}{4}}\alpha_1(x,t)+\frac{(a_1(x,t)+b_1(x,t))}{(2A_1)^{\frac{1}{4}}}&0\\(2A_1)^{\frac{1}{4}}\alpha_1(x,t)+\frac{(a_1(x,t)+b_1(x,t))}{(2A_1)^{\frac{1}{4}}}&0
\end{pmatrix}+\ord(1),\quad \lambda\to\frac{1}{A_1},~ \lambda\in\mathbb{C}_+,\]
where $c_+=0$ if $\beta_1(x,t)\neq 0$, and $c_+=\frac{1}{\tilde s_{11}(\frac{1}{A_1}_+)}$ if $\beta_1(x,t)= 0$.

The other the statements follow from the symmetry considerations.
\end{proof}

\begin{remark}\label{Rem 3.8.}
\begin{enumerate}    
        \item $\rho(\lambda)=\frac{\tilde s_{21}(\lambda_+)}{\tilde  s_{11}(\lambda_+)}\eul^{-2p_2(x,t,\lambda_+)}=\ord(1)$ as $\lambda\to\frac{1}{A_2}$. Indeed, in the proof of the Proposition \ref{prop:sing-M}, we have seen that $\tilde s_{11}(x,t,\lambda)=\frac{\ii\eul^{\frac{3\pi\ii}{4}}}{\nu_2^+(\lambda)}\beta_2(x,t)+\ord(1)$ as $\lambda\to\frac{1}{A_2}$. Analogously, due to \eqref{til_scatcoeff-c}, we have $\tilde s_{21}(x,t,\lambda)=-\frac{\ii\eul^{\frac{3\pi\ii}{4}}}{\nu_2^+(\lambda)}\beta_2(x,t)+\ord(1)$ as  $\lambda\to\frac{1}{A_2}$. Moreover, by our assumptions, $\tilde s_{11}(\frac{1}{A_2})\neq 0$, and hence the claim follows.

        \item $\rho(\lambda)=\ord(1)$ as $\lambda\to\frac{1}{A_1}$.
    Indeed, we already know that $\tilde s_{11}(\lambda)=\frac{\eul^{\frac{3\pi\ii}{4}}}{\nu_1^+(\lambda)}\beta_1(x,t)+\ord(1)$ as $\lambda\to\frac{1}{A_1}$, $\lambda\in\mathbb{C}_+$. Analogously, \eqref{til_scatcoeff-c} together with \eqref{sym_bar} implies that if $\beta_1\neq 0$ we have $\tilde s_{21}(\lambda)=\frac{\ii\eul^{\frac{3\pi\ii}{4}}}{\nu_1^+(\lambda)}\overline{\beta_1(x,t)}+\ord(1)$, $\lambda\to\frac{1}{A_1}$, $ \lambda\in\mathbb{C}_+$. Moreover, by our assumptions, $\tilde s_{11}(\frac{1}{A_1}_+)\neq 0$, and hence the claim follows.

    \end{enumerate}
\end{remark} 

\subsubsection{Residue conditions.}
By \eqref{scat_til}, zeros of $\tilde s_{11}(\lambda)$ coincide with zeros $s_{11}(\lambda)$;
hence, by
Proposition \ref{prop:simple-zeros}, they are real and simple. Moreover, the symmetry \eqref{sym_s11_minus_notSigma_2} implies that $-\lambda_k$ is a zero of $\tilde s_{11}(\lambda)$ together with $\lambda_k$; we will denote
the set of zeros of $s_{11}(\lambda)$   by $\{\lambda_k,-\lambda_k\}_1^n$,
where $\lambda_k\in(0,\frac{1}{A_2})$.
\begin{proposition} $M^{(1)}$ has simple poles at $\{\lambda_k,-\lambda_k\}_1^n$. Moreover,
\begin{equation}\label{res}
    \Res_{\pm\lambda_k}M^{(1)}(x,t,\lambda)=\frac{b_k}{s'_{11}(\lambda_k)}\eul^{2p_2(\lambda_k)}M^{(2)}(x,t,\pm\lambda_k),
\end{equation}

Moreover, $\frac{b_k}{s'_{11}(\lambda_k)}\eul^{2p_2(\lambda_k)}\in\mathbb{R}$.
\end{proposition}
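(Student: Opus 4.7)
The plan is to use the alternate representation \eqref{M-b} of $M$ as a starting point, since it puts the dependence on $s_{11}$ and on $\Phi_1^{(1)},\Phi_2^{(2)}$ in the cleanest form. Specifically, the first column reads
\[
M^{(1)}(x,t,\lambda)=\frac{\Phi_1^{(1)}(x,t,\lambda)}{s_{11}(\lambda)}\,\eul^{p_2(x,t,\lambda)}.
\]
Since each $\lambda_k\in(0,\tfrac{1}{A_2})$ lies in the domain of analyticity of both $\Phi_1^{(1)}$ and $s_{11}$, and since Proposition~\ref{prop:simple-zeros} gives that $s_{11}$ has only simple zeros there, $M^{(1)}$ inherits at $\lambda_k$ at worst a simple pole; it is a genuine pole because $\Phi_1^{(1)}(\lambda_k)=b_k\Phi_2^{(2)}(\lambda_k)\neq 0$ (if this were zero, $\Phi_1^{(1)}$ would be the trivial solution). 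The symmetry \eqref{sym_s11_minus_notSigma_2} then transports the same statement to $-\lambda_k$.

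Next I would compute the residue at $\lambda_k$ directly. From the simplicity of the zero of $s_{11}$ and the relation $\Phi_1^{(1)}(\lambda_k)=b_k\Phi_2^{(2)}(\lambda_k)$,
\[
\Res_{\lambda_k}M^{(1)}=\frac{\Phi_1^{(1)}(\lambda_k)}{s_{11}'(\lambda_k)}\,\eul^{p_2(\lambda_k)}=\frac{b_k}{s_{11}'(\lambda_k)}\,\eul^{p_2(\lambda_k)}\Phi_2^{(2)}(\lambda_k)=\frac{b_k}{s_{11}'(\lambda_k)}\,\eul^{2p_2(\lambda_k)}M^{(2)}(x,t,\lambda_k),
\]
since $M^{(2)}(\lambda)=\Phi_2^{(2)}(\lambda)\eul^{-p_2(\lambda)}$. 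For the pole at $-\lambda_k$ I would apply Proposition~\ref{prop:sym_Phi_minus_notin_sigma_i}: the relations $\Phi_1^{(1)}(-\lambda)=-\sigma_3\Phi_1^{(1)}(\lambda)$ and $\Phi_2^{(2)}(-\lambda)=\sigma_3\Phi_2^{(2)}(\lambda)$ yield $b(-\lambda_k)=-b_k$, while $s_{11}(-\lambda)=s_{11}(\lambda)$ gives $s_{11}'(-\lambda_k)=-s_{11}'(\lambda_k)$, and $k_2(-\lambda)=k_2(\lambda)$ gives $p_2(-\lambda_k)=p_2(\lambda_k)$. The two signs cancel, producing exactly the stated residue formula at $-\lambda_k$.

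Finally, for the reality claim I would combine three symmetries at the real point $\lambda_k$. The symmetry \eqref{sym_phi_barbar-notin_Sigma_i} forces both $\Phi_1^{(1)}(\lambda_k)$ and $\Phi_2^{(2)}(\lambda_k)$ to be purely imaginary, so their proportionality constant $b_k$ is real. The symmetry \eqref{sym_s11_barbar} shows $s_{11}$ is real-valued on $\mathbb{R}\setminus\Sigma_2$, hence $s_{11}'(\lambda_k)\in\mathbb{R}$. And since $\lambda_k^2<1/A_2^2$, the definition of $k_2$ (with $k_2(0)=\ii/A_2$) gives $k_2(\lambda_k)\in\ii\mathbb{R}$, so by \eqref{p_i} the quantity $p_2(\lambda_k)=\ii A_2k_2(\lambda_k)\cdot(\text{real})$ is real and $\eul^{2p_2(\lambda_k)}>0$. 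Multiplying these three real scalars concludes the proof.

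I do not foresee a genuine obstacle here; the only mildly delicate point is keeping straight the sign book-keeping at $-\lambda_k$, where three factors change sign ($b$, $s_{11}'$, $\sigma_3$-rotation of columns) but the $s_{11}'$ and $b$ signs cancel exactly so that a single common coefficient $b_k/s_{11}'(\lambda_k)\,\eul^{2p_2(\lambda_k)}$ describes both residues.
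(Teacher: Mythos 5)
Your proposal is correct and follows essentially the same route as the paper: it uses the representation \eqref{M-b} (equivalent to the paper's \eqref{M-a} via $D_j^{-1}\tilde\Phi_j=\Phi_j\eul^{Q_j}$), the relation $\Phi_1^{(1)}(\lambda_k)=b_k\Phi_2^{(2)}(\lambda_k)$ together with the simplicity of the zeros from Proposition~\ref{prop:simple-zeros} to compute the residue, the symmetries \eqref{sym_Phi_minus_notSigma_2} and \eqref{sym_s11_minus_notSigma_2} to handle $-\lambda_k$, and \eqref{sym_phi_barbar-notin_Sigma_i}, \eqref{sym_s11_barbar} for the reality claim. Your additional remarks — that $b_k\neq 0$ (so the pole is genuine) and that $p_2(\lambda_k)$ is real because $k_2(\lambda_k)\in\ii\mathbb{R}$ — are correct and in fact make the argument slightly more complete than the paper's.
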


\begin{proof} Recall that $\Phi_1^{(1)}(\lambda_k)=b_k\Phi_2^{(2)}(\lambda_k)$ with $b_k=b(\lambda_k)\in\mathbb{R}$ due the symmetry \eqref{sym_phi_barbar-notin_Sigma_i}. Then $(D_1^{-1}\tilde\Phi_1^{(1)})(\lambda_k)$ and $(D_2^{-1}\tilde\Phi_2^{(2)})(\lambda_k)$ are related as
\[
 \frac{(D_1^{-1}\tilde\Phi_1^{(1)})(\lambda_k)}{s_{11}(\lambda_k)\eul^{p_1(\lambda_k)-p_2(\lambda_k)}}=\frac{b_k}{s_{11}(\lambda_k)}\eul^{2p_2(\lambda_k)}(D_2^{-1}\tilde\Phi_2^{(2)})(\lambda_k),
\]
and hence \eqref{res} follows. Moreover, differentiating \eqref{sym_s11_barbar} and using the fact that $\lambda_k\in\mathbb{R}$, we get $s'_{11}(\lambda_k)\in\mathbb{R}$, and thus $\frac{b_k}{s'_{11}(\lambda_k)}\eul^{2p_2(\lambda_k)}\in\mathbb{R}$.


Differentiating \eqref{sym_s11_minus_notSigma_2}, we get $s'_{11}(\lambda_k)=-s'_{11}(-\lambda_k)$. On the other hand, \eqref{sym_Phi_minus_notSigma_2} implies that $b(-\lambda_k)=-b(\lambda_k)$. Combining these facts, we obtain \eqref{res}
with the minus sign.
\end{proof}

\begin{remark}
In terms of  $\tilde M$ \eqref{tilde-M}, the  residue conditions take the following
form:
\begin{subequations}\label{tildeM_res}
\begin{alignat}{4}
    &\tilde M^{(1)}(x,t,\lambda)=\frac{1}{\lambda-\lambda_{k}}
    \frac{b_k}{s'_{11}(\lambda_k)}\eul^{2p_2(\lambda_{k})}\tilde M^{(2)}(x,t,\lambda_{k+})+\ord(1),~\lambda\to\lambda_k,~\lambda\in\mathbb{C}_+,\\
    &\tilde M^{(2)}(x,t,\lambda)=\frac{1}{\lambda-\lambda_{k}} 
    \frac{b_k}{s'_{11}(\lambda_k)}\eul^{2p_2(\lambda_{k})}\tilde M^{(1)}(x,t,\lambda_{k-})+\ord(1),~\lambda\to\lambda_k,~\lambda\in\mathbb{C}_-.
\end{alignat}
\end{subequations}

\end{remark}

\subsubsection{RH problem parametrized by $\BS{(x,t)}$.}\label{sec:RH-x-t}
In the framework of the Riemann--Hilbert approach to nonlinear evolution equations, one  interprets the jump relation, normalization condition, singularity conditions, and residue conditions as a Riemann--Hilbert problem, with the jump matrix and residue parameters determined by the initial data for the nonlinear problem in question.
The considerations above imply that $M(x,t,\lambda)$ can be characterized as the solution 
of the  following Riemann-Hilbert problem:

Find a $2\times 2$ meromorphic matrix $M(x,t,\lambda)$ that satisfies the following conditions:
\begin{enumerate}[\textbullet]
\item 
\emph{Jump} condition \eqref{jump_}.
\item
\emph{Normalization} condition \eqref{M-norm}.
\item
\emph{Singularity} conditions: the singularities of $M(x,t,\lambda)$ at $\pm\frac{1}{A_j}$ are of order not bigger than $\frac{1}{4}$.

\item
\emph{Residue} conditions (if any): 
given $\accol{\lambda_k,  \kappa_k}_1^N$ with $\lambda_k\in (0,\frac{1}{A_2})$
and $\kappa_k\in\mathbb{R}\setminus\{0\}$, 
$M^{(1)}(x,t,\lambda)$ has simple poles at $\accol{\lambda_k, -\lambda_k}_1^N$, with the residues  satisfying the equations

\begin{equation}\label{res-M}
    \Res_{\pm\lambda_k}M^{(1)}(x,t,\lambda)=\kappa_k\eul^{2p_2(\lambda_k)}M^{(2)}(x,t,\pm\lambda_k).
\end{equation}

\end{enumerate}
\begin{remark}
The solution of the RH problem above, if exists, satisfies the following properties:
\begin{enumerate}
\item
 $\det M\equiv 1$ (follows from the fact that $\det J\equiv1$).
\item 
\emph{Symmetries} 
\begin{subequations}\label{sym-M_}
\begin{alignat}{4}
M(-\lambda)&=-\sigma_3 M(\lambda)\sigma_3,\qquad \overline{M(\overline{\lambda})}=-M(\lambda),\qquad \lambda\in\mathbb{C}\setminus\Sigma_2,\label{sym-M_a}\\
M((-\lambda)_-)&=-\sigma_3 M(\lambda_+)\sigma_3,\qquad \overline{M(\lambda_-)}=-M(\lambda_+),\qquad \lambda\in\dot\Sigma_1.
\end{alignat}
\end{subequations}
where $M(\lambda)\equiv M(x,t,\lambda)$
(follows from the respective symmetries of the jump matrix and the residue conditions,
assuming the uniqueness of the solution).
\end{enumerate}
\end{remark}
\begin{remark}
We do not need to specify the singularities at the branch points $\pm\frac{1}{A_j}$ in order to formulate RH problem. It is enough to require them to be of order not bigger than $\frac{1}{4}$.
\end{remark}

As for other Camassa--Holm-type equations, a principal drawback of the RH formalism
presented above is that the jump condition \eqref{jump_} involves not only
the scattering functions uniquely determined by the initial data for problem 
\eqref{mCH1-ic}, but the solution itself, via $p_2(x,t,\lambda)$
involving $m(x,t)$ \eqref{p_i}.
In order to have the data for a RH problem to 
be  explicitly determined by the initial data only,  we introduce the space variable $y(x,t)\coloneqq x-\frac{1}{A_2}\int_x^{+\infty}( m(\xi,t)-A_2)\dd\xi-A_2^2 t$,
which will play the role of a parameter (together with $t$) for the RH problem,
see Section \ref{sec3-3} below.

In order to determine an efficient way for retrieving the solution of the mCH equation from the solution of the RH problem, we will use   the behavior of the Jost solutions of the Lax pair equations evaluated at $\lambda=0$, for which the $x$-equation \eqref{Lax-x} of the Lax pair becomes trivial (independent of the solution of the mCH equation).


\subsection{Eigenfunctions near $0$.}\label{Eigen_0} 
In the case of the Camassa--Holm equation \cite{BS08} as well as other CH-type nonlinear integrable equations studied so far, see, e.g., \cites{BS15}, the analysis of the behavior of the respective Jost solutions at a dedicated point in the complex plane of the spectral parameter (in our case, at $\lambda=0$) requires a dedicated gauge transformation of the Lax pair equations.

It is remarkable that in the case of the mCH equation, in order to control the 
behavior of the eigenfunctions at $\lambda=0$, we don't need to introduce an additional transformation; all we need is to regroup the terms in the Lax pair \eqref{Lax-1}.

Namely, we rewrite \eqref{Lax-1} as follows:

\begin{subequations}\label{Lax-1_0}
\begin{equation}\label{Lax-2-x}
\hat\Phi_{jx}+\frac{\ii A_j k_j(\lambda)}{2}\sigma_3\hat\Phi_j=\hat U^0_j \hat\Phi_j,
\end{equation}
where $\hat U^0_j\equiv\hat U^0_j(x,t,\lambda)$ is given by
\begin{equation}\label{U^0_i-hat}
\hat U^0_j=\frac{(m-A_j)}{2}\frac{\lambda}{\ii k_j(\lambda)}
\begin{pmatrix}
\lambda & \frac{1}{A_j} \\
-\frac{1}{A_j} & -\lambda \\
\end{pmatrix},
\end{equation}
and
\begin{equation}\label{Lax-2-t}
\hat\Phi_{jt} +\ii A_j k_j(\lambda)
\left(-\frac{A_j^2}{2} -\frac{1}{\lambda^2}\right)\sigma_3\hat\Phi_j= \hat V^0_j \hat\Phi_j,
\end{equation}
where $\hat V^0_j\equiv\hat V^0_j(x,t,\lambda)$ is given by
\begin{equation}\label{hat-V^0_i}
\begin{aligned}
\hat V^0_j&=\hat V_j+\ii A_j k_j(\lambda)\left(\frac{(u^2-u_x^2)m}{2A_j}-\frac{A_j^2}{2}\right) \sigma_3.
\end{aligned}
\end{equation}
\end{subequations}

Further, introduce (compare with \eqref{p_i})
\begin{equation}\label{p_i_0}
p^0_j(x,t,\lambda)\coloneqq\frac{\ii A_j k_j(\lambda)}{2}\left(x-2\left(\frac{A_j^2}{2} +\frac{1}{\lambda^2}\right)t\right).
\end{equation}
Then, introducing  $Q^0_j\coloneqq p^0_j\sigma_3$ and $\widetilde\Phi^0_j\coloneqq\hat\Phi_j \eul^{Q_j^0}$,  equations \eqref{Lax-2-x} and \eqref{Lax-2-t} reduce to \begin{equation}\label{comsys-0}
\begin{cases}
\widetilde\Phi^0_{jx}+[Q^0_{jx},\widetilde\Phi_j^0]=\hat U^0_j\widetilde\Phi_j^0,&\\
\widetilde\Phi^0_{jt}+[Q^0_{jt},\widetilde\Phi_j^0]=\hat V^j_0\widetilde\Phi_j^0.&
\end{cases}
\end{equation}
Define the Jost solutions $\widetilde\Phi_{j}^0$ of \eqref{comsys-0} as the solutions of the integral equations
\begin{equation}\label{inteq0_lam}
\widetilde\Phi_{j}^0(x,t,\lambda)=I+\int_{(-1)^j\infty}^x\eul^{\frac{-\ii A_j k_j(\lambda)}{2}(x-\xi)\sigma_3}\hat U_j^0(\xi,t,\lambda)\widetilde\Phi_j^{0}(\xi,t,\lambda)\eul^{\frac{\ii A_j k_j(\lambda)}{2}(x-\xi)\sigma_33}\dd\xi.
\end{equation}

Further, defining $\hat\Phi_{j}^0\coloneqq\widetilde\Phi^0_{j}\eul^{-p_j^0 \sigma_3}$, we observe that $\hat\Phi_{j}^0(x,t,\lambda)$ and $\hat\Phi_j(x,t,\lambda)$ satisfy the same differential equations \eqref{Lax-1} and thus they are related by matrices $C_j(\lambda)$ independent of $x$ and $t$:
\[
\hat\Phi_{j}=\hat\Phi_j^{0}C_j(\lambda).
\]
Consequently,
\begin{equation}\label{inf_0_rel}
\widetilde\Phi_{j}(x,t,\lambda)=\widetilde\Phi_j^{0}(x,t,\lambda)\eul^{-p_j^0(x,t,\lambda) \sigma_3}C_j(\lambda)\eul^{p_j(x,t,\lambda)\sigma_3}.
\end{equation}
Since $p_j(x,t,\lambda)-p_j^0(x,t,\lambda)=\frac{\ii k_j(\lambda)}{2
}\int_x^{(-1)^j\infty}( m(\xi,t)-A_j)\dd\xi$ and 
\[
\widetilde\Phi_{j}(x,t,\lambda)=\widetilde\Phi_j^{0}(x,t,\lambda)\eul^{\frac{\ii k_j(\lambda)}{2
}\int^x_{(-1)^j\infty}( m(\xi,t)-A_j)\dd\xi \sigma_3},
\]
passing to the limits
$x\to (-1)^j\infty$, we get  $C_j(\lambda)=I$.

Noticing that  $\hat U_j^0(x,t,0)\equiv 0$, it follows from \eqref{inteq0_lam} that $\widetilde\Phi_j^{0}(x,t,0)\equiv I$ and thus $\widetilde\Phi_j(x,t,0)=\eul^{-\frac{1}{2A_j
}\int^x_{(-1)^j\infty}( m(\xi,t)-A_j)\dd\xi \sigma_3}$.
Combining this with $D_j^{-1}(0)=\begin{pmatrix}
0&i\\i&0
\end{pmatrix}$ gives
\[
(D_j^{-1}\widetilde\Phi_j)(x,t,0)=\ii\begin{pmatrix}
0&\eul^{\frac{1}{2A_j
}\int^x_{(-1)^j\infty}( m(\xi,t)-A_j)\dd\xi }\\
\eul^{-\frac{1}{2A_j
}\int^x_{(-1)^j\infty}( m(\xi,t)-A_j)\dd\xi}&0
\end{pmatrix}
\]
Consequently,
\[
\tilde s_{11}(0)=\eul^{-\frac{1}{2A_1
}\int^x_{-\infty}( m(\xi,t)-A_1)\dd\xi-\frac{1}{2A_2
}\int_x^{\infty}( m(\xi,t)-A_2)\dd\xi}
\]
(hence $\tilde s_{11}(0)\neq0$) and
\begin{equation}\label{M(0)}
M(x,t,0) = \ii\begin{pmatrix}
0&\eul^{-\frac{1}{2A_2
}\int_x^{\infty}( m(\xi,t)-A_2)\dd\xi }\\
\eul^{\frac{1}{2A_2
}\int_x^{\infty}( m(\xi,t)-A_2)\dd\xi}&0
\end{pmatrix}.
\end{equation}

\begin{remark}\label{rem3-1}
Considering $M(x,t,\lambda)$ as the solution of the RH problem
in Section \ref{sec:RH-x-t}, 
the matrix structure of $M(x,t,0)$ as in \eqref{M(0)}, i.e.,
\begin{equation}\label{M(pmi-a)}
M(x,t,0) =
\ii\begin{pmatrix}
0 & a_1(x,t) \\
a_1^{-1}(x,t) & 0
\end{pmatrix}
\end{equation}
with some $a(x,t)\in\D{R}$,  
 which follows from the symmetry properties \eqref{sym-M_a}  of the solution
 taking into account that $\det M\equiv 1$ (provided the solution is unique).  
\end{remark}

In order to extract the solution of the mCH equation from the solution of the associated RH problem, it turns to be useful to  find the next term in the expansion of $M(x,t,\lambda)$ at $\lambda=0$.

First, expanding $D_j^{-1}(\lambda)$ near $0$, we have 
\[D_j^{-1}(\lambda)=\begin{pmatrix}
0&i\\i&0
\end{pmatrix}+\lambda\begin{pmatrix}
\ii\frac{A_j}{2}&0\\0&\ii\frac{A_j}{2}
\end{pmatrix}+\ord(\lambda^2).\]
On the other hand,
$\eul^{\frac{\ii k_j(\lambda)}{2
}\int^x_{(-1)^j\infty}( m(\xi,t)-A_j)\dd\xi \sigma_3}=\eul^{-\frac{ 1}{2A_j
}\int^x_{(-1)^j\infty}( m(\xi,t)-A_j)\dd\xi \sigma_3}+\ord(\lambda^2),~\lambda\to0.$
Then, expanding $\widetilde\Phi_{j}^0(x,t,\lambda)$ at $0$ using the Neumann series, we have 
\[\widetilde\Phi_{j}^0(x,t,\lambda)=I+\lambda\begin{pmatrix}
0&-\int_{(-1)^j\infty}^x e^{x-\xi}\frac{m-A_j}{2}\dd \xi\\
\int_{(-1)^j\infty}^x e^{-(x-\xi)}\frac{m-A_j}{2}\dd \xi&0
\end{pmatrix}+\ord(\lambda^2).\]
In particular,
\[
\tilde s_{11}(\lambda)=\eul^{-\frac{1}{2A_1
}\int^x_{-\infty}( m(\xi,t)-A_1)\dd\xi-\frac{1}{2A_2
}\int_x^{\infty}( m(\xi,t)-A_2)\dd\xi}+\ord(\lambda^2).
\]
Finally, we have 
\begin{equation}\label{x-expan} 
M(x,t,\lambda)=\ii\begin{pmatrix}
0& a_1(x,t)\\ a_1^{-1}(x,t)&0
\end{pmatrix}+\ii\lambda\begin{pmatrix}
a_2(x,t)&0\\0&a_3(x,t)
\end{pmatrix}+\ord(\lambda^2),
\end{equation}
where 
\begin{subequations}\label{a_j}
    \begin{alignat}{4}
    a_1(x,t) &= \eul^{-\frac{1}{2A_2
}\int_x^{\infty}( m(\xi,t)-A_2)\dd\xi},\\
a_2(x,t) &= (\int_{-\infty}^x e^{-(x-\xi)}\frac{m-A_1}{2}\dd\xi+\frac{A_1}{2})\eul^{\frac{1}{2A_2
}\int_x^{\infty}( m(\xi,t)-A_2)\dd\xi},\\
a_3(x,t) &= (\int^{\infty}_x e^{(x-\xi)}\frac{m-A_2}{2}\dd\xi+\frac{A_2}{2})\eul^{-\frac{1}{2A_2
}\int_x^{\infty}( m(\xi,t)-A_2)\dd\xi}.
    \end{alignat}
\end{subequations}

Notice that the matrix structure of terms in the r.h.s. of \eqref{x-expan} is consistent with 
 the symmetry properties \eqref{sym-M_a}  of $M$. 
 \begin{proposition}\label{prop-recover-x}
 $u(x,t)$ and $u_x(x,t)$ can be algebraically expressed in terms of the coefficients
 $a_j(x,t)$, $j=1,3$ in the development \eqref{x-expan} of $M(x,t,\lambda)$ as follows:
 \begin{subequations}\label{uu_x}
    \begin{alignat}{4}
    u(x,t) &= a_1(x,t)a_2(x,t)+ a_1^{-1}(x,t) a_3(x,t),\\
    \label{uu_x-b}
    u_x(x,t) & = -a_1(x,t)a_2(x,t)+ a_1^{-1}(x,t)a_3(x,t).
    \end{alignat}
\end{subequations}
 \end{proposition}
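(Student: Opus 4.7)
The plan is to verify both identities by direct computation from the explicit formulas \eqref{a_j}. First I would observe that the exponential factor $\exp(\tfrac{1}{2A_2}\int_x^\infty(m-A_2)\,d\xi)$ in $a_2$ cancels against the factor in $a_1$, and likewise the factor in $a_3$ cancels against $a_1^{-1}$, leaving the compact expressions
\begin{align*}
a_1(x,t)\,a_2(x,t) &= \tfrac{1}{2}\int_{-\infty}^x e^{\xi-x}\bigl(m(\xi,t)-A_1\bigr)\,d\xi + \tfrac{A_1}{2},\\
a_1^{-1}(x,t)\,a_3(x,t) &= \tfrac{1}{2}\int_x^{\infty} e^{x-\xi}\bigl(m(\xi,t)-A_2\bigr)\,d\xi + \tfrac{A_2}{2}.
\end{align*}
Denote the sum by $U(x,t)$ and the combination $-a_1a_2+a_1^{-1}a_3$ by $V(x,t)$.

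Next I would differentiate $U$ in $x$ via the Leibniz rule: the boundary contributions produce $\tfrac{m-A_1}{2}-\tfrac{m-A_2}{2}=\tfrac{A_2-A_1}{2}$ while the integrals pick up a sign flip from differentiating the $e^{\pm x}$ factors, and one checks $\partial_x U = V$ directly. This immediately reduces \eqref{uu_x-b} to \eqref{uu_x}. A second differentiation then gives, after the cancellations, $U - U_{xx} = m(x,t)$. For the boundary behavior as $x\to-\infty$, the first integral vanishes by dominated convergence, while the substitution $\eta=\xi-x$ in the second turns it into $\tfrac{1}{2}\int_0^\infty e^{-\eta}(m(x+\eta,t)-A_2)\,d\eta$, which tends to $\tfrac{A_1-A_2}{2}$; combined with $\tfrac{A_1+A_2}{2}$ this gives $U\to A_1$. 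The case $x\to+\infty$ is symmetric.

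Finally, since the ODE $u - u_{xx} = m$ with prescribed limits $u\to A_j$ at $(-1)^j\infty$ admits a unique bounded solution---the homogeneous solutions $e^{\pm x}$ are unbounded, so both integration constants are pinned down by the two one-sided limits---and since the actual mCH solution $u(x,t)$ satisfies this system by definition of $m$ together with \eqref{mCH-bc}, we conclude $u\equiv U$, which is \eqref{uu_x}. Then \eqref{uu_x-b} follows from the already-verified identity $V = \partial_x U$.

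The main (and essentially only) obstacle is bookkeeping: keeping signs straight when differentiating integrals with moving endpoints and $x$-dependent exponential kernels, and justifying dominated convergence for the boundary limits. Both are routine under the standing assumption that $u_0$ (hence $m_0-A_j$) approaches its limits sufficiently fast to be integrable at $\pm\infty$; no deeper structural input beyond the explicit formulas \eqref{a_j} is needed.
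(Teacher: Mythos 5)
Your proposal is correct and follows essentially the same route as the paper: both compute $a_1a_2+a_1^{-1}a_3$ explicitly from \eqref{a_j}, verify that this quantity $v$ satisfies $v-v_{xx}=m$ with $v\to A_j$ as $x\to(-1)^j\infty$, conclude $v\equiv u$, and identify $v_x$ with $-a_1a_2+a_1^{-1}a_3$. The only difference is that you spell out the uniqueness of the bounded solution of $u-u_{xx}=m$ (the unbounded homogeneous solutions $e^{\pm x}$ being excluded), which the paper leaves implicit.
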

\begin{proof}
Introduce $v(x,t) := a_1(x,t)a_2(x,t)+ a_1^{-1}(x,t) a_3(x,t)$. Using \eqref{a_j}
it follows that 
\begin{equation}\label{v-a}
v(x,t) = \frac{A_1+A_2}{2}+\int_{-\infty}^x e^{-(x-\xi)}\frac{m-A_1}{2}\dd\xi+\int^{\infty}_x e^{(x-\xi)}\frac{m-A_2}{2}\dd\xi
\end{equation}
and thus, differentiating w.r.t. $x$, 
\begin{equation}\label{v_x-a}
v_x(x,t) = \frac{A_2-A_1}{2}-\int_{-\infty}^x e^{-(x-\xi)}\frac{m-A_1}{2}\dd\xi+\int^{\infty}_x e^{(x-\xi)}\frac{m-A_2}{2}\dd\xi.
\end{equation}
Since we assume that $\lim_{x\to(-1)^j\infty}m(x,t)=A_i$, from \eqref{v-a}
it follows that $v-v_{xx}=m$ and that 
\[
\lim_{x\to(-1)^j\infty}v(x,t)=A_i, \qquad 
\lim_{x\to(-1)^i\infty}v_x(x,t)=0;
\]
thus $v\equiv u$. Finally, we notice that the expression in the r.h.s. of \eqref{v_x-a}
can be written as the r.h.s. of \eqref{uu_x-b} taking into account \eqref{a_j}.

\end{proof}


\subsection{RH problem in the $\BS{(y,t)}$ scale}\label{sec3-3}
As we already mentioned, 
 the jump condition \eqref{jump_} involves not only
the scattering functions uniquely determined by the initial data for problem 
\eqref{mCH1-ic}, but the solution itself, via $m(x,t)$, which 
enters the definition of $p_2(x,t,\lambda)$
 \eqref{p_i}.
In order to have the data for the RH problem to 
be  explicitly determined by the initial data only,  we introduce the new space variable  $y(x,t)$ by 
\begin{equation}\label{shkala}
    y(x,t)=x-\frac{1}{A_2}\int_x^{+\infty}(m(\xi,t)-A_2)\dd\xi-A_2^2 t,
\end{equation} 
Then, introducing $\hat M(y,t,\lambda)$ so that $M(x,t,\lambda)=\hat M (y(x,t),t,\lambda)$, the dependence of the jump matrix in  \eqref{jump_} 
on $y$ and $t$ as parameters becomes explicit: the jump condition for 
$\hat M(y,t,\lambda)$ has the form
\begin{subequations} \label{Jp-y}
\begin{equation}\label{jump-y}
\hat M^+(y,t,\lambda)=\hat M^-(y,t,\lambda)\hat J(y,t,\lambda),\quad\lambda\in \dot\Sigma_1\cup\dot\Sigma_0.
\end{equation}
Here
\begin{equation}\label{J-J0}
\hat J(y,t,\lambda)\coloneqq\begin{pmatrix}
0&\ii\\
\ii&0
\end{pmatrix}\begin{pmatrix}
\eul^{-\hat p_2(y,t,\lambda_+)}&0\\
0&\eul^{\hat p_2(y,t,\lambda_+)}
\end{pmatrix}J_0(\lambda)
\begin{pmatrix}
\eul^{\hat p_2(y,t,\lambda_+)}&0\\
0&\eul^{-\hat p_2(y,t,\lambda_+)}
\end{pmatrix},
\end{equation}
where $J_0(\lambda)$ is defined by \eqref{jump_J0} and
$p_2$ is explicitly given in terms of $y$ and $t$:
\begin{equation}\label{p-y}
\hat p_2(y,t,\lambda) \coloneqq \frac{\ii A_2 k_2(\lambda)}{2}\left(y-\frac{2t}{\lambda^2}\right).
\end{equation}
\end{subequations}

Similarly,  the residue conditions \eqref{res-M} become explicit 
as well:

\begin{equation}\label{res-M-hat}
\Res_{\pm\lambda_k}\hat M^{(1)}(y,t,\lambda)=\kappa_k\eul^{2\hat p_2(y,t,\lambda_k)}\hat M^{(2)}(y,t,\pm\lambda_k),
\end{equation}
with $\kappa_k=\frac{b_k}{s'_{11}(\lambda_k)}$.

Noticing that the normalization condition \eqref{M-norm} and the singularity conditions at $\lambda=\pm \frac{1}{A_j}$ hold in the new scale $(y,t)$, we arrive at the basic RH problem
characterizing problem \eqref{mCH-1}.

\begin{rh-pb*}
Given $\rho(\lambda)$ for $\lambda\in \dot\Sigma_1\cup\dot\Sigma_0$, and $\accol{\lambda_k,\kappa_k}_1^N$ with $\lambda_k\in(0,\frac{1}{A_2})$ and $\kappa_k\in\mathbb{R}\setminus\{0\}$, 
associated with the initial data $u_0(x)$ in \eqref{mCH1-ic},
find a piece-wise (w.r.t.~$\dot\Sigma_2$) meromorphic, $2\times 2$-matrix valued function $\hat M(y,t,\lambda)$ satisfying the following conditions:
\begin{enumerate}[\textbullet]
\item
Jump condition \eqref{Jp-y} across $\dot\Sigma_1\cup\dot\Sigma_0$ (with $J_0(\lambda)$ defined by \eqref{jump_J0}).
\item
Residue conditions \eqref{res-M-hat}.
\item Normalization condition:
\begin{equation}\label{norm-hat}
\hat M(y,t,\lambda)=\begin{cases}\sqrt{\frac{1}{2}}
\begin{pmatrix}
-1&\ii\\
\ii&-1
\end{pmatrix}+\ord(\frac{1}{\lambda}), \quad \lambda\to\infty,~ \lambda\in\mathbb{C}^+,\\
\sqrt{\frac{1}{2}}
\begin{pmatrix}
1&\ii\\
\ii&1
\end{pmatrix}+\ord(\frac{1}{\lambda}), \quad \lambda\to\infty,~ \lambda\in\mathbb{C}^-.
\end{cases}
\end{equation}

\item
Singularity conditions: the singularities of $
\hat M(y,t,\lambda)$ at $\pm\frac{1}{A_j}$ are of order not bigger than $\frac{1}{4}$.
\end{enumerate}
\end{rh-pb*}
Evaluating the solution of this problem as $\lambda\to 0$,
we are able to present the solution $u$ to the initial value problem 
\eqref{mCH1-ic} in a parametric form, see below. 
As for the data for the RH problem, the scattering matrix 
$s(\lambda)$ (and hence $s_{11}(\lambda)$, $s_{21}(\lambda)$, and $\rho(\lambda)$)
as well as the discrete data $\left\{\lambda_k, \kappa_k\right\}_1^n$
are determined by $u_0(x)$ via the solutions of \eqref{inteq} considered for $t=0$.


\ifshort
The uniqueness of the solution of the basic RH problem follows
using standard arguments based on the application of Liouville's theorem
to the ratio $\hat M_1(\hat M_2)^{-1}$ of two 
potential solutions, $\hat M_1$ and $\hat M_2$. Particularly, the 
singularity condition implies that the possible singularities
of $\hat M_1(\hat M_2)^{-1}$ are of order no bigger that $1/2$ 
and that these singularities, being isolated, are removable.
\else
\subsection{Uniqueness of the solution of the basic RH problem}\label{sec:6}

Assume that the RH problem \eqref{Jp-y}--\eqref{sym-M-hat} has a solution $\hat M $. In order to prove that this solution is unique, we first observe that $\det\hat M\equiv 1$.

Indeed, the conditions for $\hat M$ imply that $\det\hat M$ has neither a jump across $\Sigma_2$ (this follows from \eqref{det_J}) no singularities at $\lambda_k$. Moreover, $\det\hat M$ tends to $1$ as $\lambda\to\infty$, and the only possible singularities of $\det\hat M$ are simple poles at $\lambda=\pm \frac{1}{A_k}$. The order of this singularities is not bigger than $\frac{1}{4}$. But we know that analytic function can not have singularities of order less than $1$. Thus, by Liouville's theorem, $\det\hat M \equiv 1 $.

Now suppose that $\hat M_1$ and $\hat M_2$ are two solutions of the RH problem, and consider $P\coloneqq\hat M_1(\hat M_2)^{-1}$. Obviously, $P$ has neither a jump across $\Sigma_2$ no singularities at $\lambda_k$. Moreover, $P$ tends to $I$ as $\lambda\to\infty$, and the only possible singularities of $P$ are simple poles at $\lambda=\pm \frac{1}{A_k}$. The biggest possible order of these singularities is $\frac{1}{2}$, hence $P$ is analytic at these points as well. Now, by Liouville's theorem, $P \equiv I $.
\fi

The uniqueness, in particular, implies the symmetries
\begin{subequations}\label{sym-M-hat}
\begin{alignat}{4}
\hat M(-\lambda)&=-\sigma_3 \hat M(\lambda)\sigma_3,\qquad \overline{\hat M(\overline{\lambda})}=-M(\lambda),\qquad \lambda\in\mathbb{C}\setminus\Sigma_2,\\
\hat M((-\lambda)_-)&=-\sigma_3 \hat M(\lambda_+)\sigma_3,\qquad \overline{\hat M(\lambda_-)}=-\hat M(\lambda_+),\qquad \lambda\in\dot\Sigma_1.
\end{alignat}
\end{subequations}
where $\hat M(\lambda)\equiv \hat M(y,t,\lambda)$, 
which follows from the corresponding symmetries of $\hat J(y,t,\lambda)$.

\subsection{Recovering $\BS{u(x,t)}$ from the solution of the basic RH problem}\label{sec:recover}

Comparing the RH problem \eqref{jump_}, \eqref{M-norm}, \eqref{res-M} parametrized by 
$x$ and $t$ with the RH problem \eqref{Jp-y}--\eqref{norm-hat} parametrized by 
$y$ and $t$ and using \eqref{a_j}--\eqref{shkala} we arrive at our main representation result.

\begin{theorem}\label{main}
Assume that $u(x,t)$ is the solution of the Cauchy problem \eqref{mCH1-ic} and let  $\hat M(y,t,x)$ be the solution of the associated RH problem \eqref{Jp-y}--\eqref{norm-hat}, whose data are determined by $u_0(x)$.
Let  
\begin{equation}\label{M-hat-expand}
\hat M(y,t,\lambda)=\ii\begin{pmatrix}
0& \hat a_1(y,t)\\\hat a_1^{-1}(y,t)&0
\end{pmatrix}+\ii\lambda\begin{pmatrix}
\hat a_2(y,t)&0\\0&\hat a_3(y,t)
\end{pmatrix}+\ord(\lambda^2)
\end{equation}
be the development of  $\hat M(y,t,x)$ at $\lambda=0$. 
Then the solution $u(x,t)$ of the Cauchy problem \eqref{mCH1-ic} 
can be expressed, in a parametric form, in terms of $\hat a_j(y,t)$, $j=1,2,3$:
$u(x,t)=\hat u(y(x,t),t)$, where
\begin{subequations}\label{recover-2}
\begin{align}\label{u_(y,t)}
&\hat u(y,t)=\hat a_1(y,t)\hat a_2(y,t)+\hat a_1^{-1}(y,t)\hat a_3(y,t),\\
&x(y,t)=y-2\ln\hat a_1(y,t)+A_2^2 t.
\label{x(y,t)-2}
\end{align}
Additionally, $\hat u_x(y,t)$ can also be algebraically expressed in terms of $\hat a_j(y,t)$, $j=1,2,3$: 
$u_x(x,t)=\hat u_x(y(x,t),t)$, where
\begin{equation}
\label{u_x(y,t)-2}
\hat u_x(y,t)=-\hat a_1(y,t)\hat a_2(y,t)+\hat a_1^{-1}(y,t)\hat a_3(y,t).
\end{equation}
\end{subequations}
\end{theorem}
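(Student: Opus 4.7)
The plan is to read off the theorem directly from the results already assembled in Section~\ref{sec:3}. By the very definition of the $(y,t)$-parametrized RH problem, $M(x,t,\lambda)=\hat M(y(x,t),t,\lambda)$, so the development of $\hat M$ at $\lambda=0$ given in \eqref{M-hat-expand} must agree, term by term, with the development of $M$ at $\lambda=0$ given in \eqref{x-expan}, once evaluated at $y=y(x,t)$. The first step is to make this identification precise, concluding that $a_j(x,t)=\hat a_j(y(x,t),t)$ for $j=1,2,3$. The matching of the off-diagonal/diagonal block structure between \eqref{x-expan} and \eqref{M-hat-expand} is automatic from the symmetries \eqref{sym-M-hat} enforced by the RH problem.

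Next, I would invoke Proposition~\ref{prop-recover-x} directly: since $a_j(x,t)=\hat a_j(y(x,t),t)$, the algebraic relations \eqref{uu_x} transcribe into $u(x,t)=\hat a_1\hat a_2+\hat a_1^{-1}\hat a_3$ and $u_x(x,t)=-\hat a_1\hat a_2+\hat a_1^{-1}\hat a_3$, evaluated at $(y(x,t),t)$. This yields \eqref{u_(y,t)} and \eqref{u_x(y,t)-2} at once.

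To obtain the change of variable \eqref{x(y,t)-2}, I would use the explicit formula for $a_1$ coming from \eqref{a_j}, namely $a_1(x,t)=\exp\bigl(-\tfrac{1}{2A_2}\int_x^{+\infty}(m(\xi,t)-A_2)\,\dd\xi\bigr)$, whence $-2\ln a_1(x,t)=\tfrac{1}{A_2}\int_x^{+\infty}(m(\xi,t)-A_2)\,\dd\xi$. Substituting this into the definition \eqref{shkala} of $y(x,t)$ gives $y(x,t)=x+2\ln a_1(x,t)-A_2^2 t$, i.e., $x=y-2\ln\hat a_1(y,t)+A_2^2 t$, which is \eqref{x(y,t)-2}.

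No step presents a real obstacle, because the technical work was already done in constructing the $(y,t)$-RH problem and in computing the $\lambda=0$ expansion of $M$. The main subtle point to address is that the map $x\mapsto y(x,t)$ is, for each fixed $t$, a $C^1$-diffeomorphism of $\mathbb{R}$ onto itself, so that the parametric formulas really define a single-valued function $\hat u$ on the $y$-line; this follows from $y_x=m/A_2>0$ under the standing assumption $m(\cdot,t)>0$ together with the matching asymptotics $y-x\to 0$ as $x\to+\infty$ and $y-x\to\tfrac{A_1-A_2}{A_2}(-\infty)+$ finite as $x\to-\infty$, derived from \eqref{mCH-bc}.
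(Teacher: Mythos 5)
Your proposal is correct and follows essentially the same route as the paper, whose proof is precisely the one-line observation "comparing the RH problem parametrized by $(x,t)$ with the one parametrized by $(y,t)$ and using \eqref{a_j}--\eqref{shkala}": you identify $\hat a_j(y(x,t),t)=a_j(x,t)$, invoke Proposition~\ref{prop-recover-x} for \eqref{u_(y,t)} and \eqref{u_x(y,t)-2}, and invert \eqref{shkala} via the explicit formula for $a_1$ to get \eqref{x(y,t)-2}. Your added remark on the monotonicity of $x\mapsto y(x,t)$ (from $y_x=m/A_2>0$) is a welcome explicit justification of a point the paper leaves implicit.
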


Alternatively, one can express $\hat u_x(y,t)$ 
in terms of the first term in \eqref{M-hat-expand} only.
The price to pay is that this expression involves the derivatives of this term. 
\begin{proposition}\label{prop-recover_1}
The $x$-derivative of the solution $u(x,t)$ of the Cauchy problem \eqref{mCH1-ic} has the parametric representation
\begin{subequations}\label{recover-1}
\begin{align}\label{u_x(y,t)}
&\hat u_x(y,t)=-\frac{1}{A_2}\partial_{ty}\ln \hat a_1(y,t),\\
\label{x(y,t)}
&x(y,t)=y-2\ln\hat a_1(y,t)+A_2^2 t.
\end{align}
\end{subequations}

\end{proposition}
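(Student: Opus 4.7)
The formula \eqref{x(y,t)} is identical to \eqref{x(y,t)-2} in Theorem \ref{main}, so nothing new has to be proved there. The substance of the proposition is \eqref{u_x(y,t)}, which must match the algebraic expression \eqref{u_x(y,t)-2}. The plan is to bypass $\hat a_2,\hat a_3$ entirely and obtain $u_x$ by differentiating $\ln\hat a_1$ directly in the $(y,t)$ variables, using only the explicit formula
\[
\ln a_1(x,t)=-\frac{1}{2A_2}\int_x^{\infty}(m(\xi,t)-A_2)\,\dd\xi
\]
from \eqref{a_j} together with the mCH equation \eqref{mCH-1} written in conservation form $m_t=-((u^2-u_x^2)m)_x$.

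First I would compute the two Jacobian entries of the map $x\mapsto y$ defined by \eqref{shkala}. A direct differentiation gives $\partial_x y=m/A_2$, and using $m_t=-((u^2-u_x^2)m)_x$ together with the boundary value $(u^2-u_x^2)m\to A_2^3$ as $x\to+\infty$ yields
\[
\partial_t y\big|_x=-\frac{(u^2-u_x^2)m}{A_2}.
\]
Consequently, the change-of-variables identities $\partial_y\big|_t=\tfrac{A_2}{m}\partial_x\big|_t$ and $\partial_t\big|_y=\partial_t\big|_x-\partial_t y\,\partial_y\big|_t$ take an explicit form in terms of the solution $u(x,t)$ of \eqref{mCH1-ic}.

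Next I would compute $\partial_t\ln\hat a_1\big|_y$. Applying the change-of-variables formula to $\ln a_1(x,t)=-\tfrac{1}{2A_2}\int_x^\infty(m-A_2)\,\dd\xi$ and using $\partial_y\ln\hat a_1=\tfrac{m-A_2}{2m}$ (which is obtained either from $\partial_y=\tfrac{A_2}{m}\partial_x$ applied to $\ln a_1$ or, equivalently, by differentiating the already established \eqref{x(y,t)} in $y$), the two $(u^2-u_x^2)$-terms cancel cleanly and I expect to arrive at
\[
\partial_t\ln\hat a_1\big|_y=\tfrac{1}{2}\bigl(A_2^2-u^2+u_x^2\bigr).
\]
Finally I would differentiate this in $y$, observing that $\partial_y\big|_t=\tfrac{A_2}{m}\partial_x\big|_t$ combined with $m=u-u_{xx}$ gives
\[
\partial_y(u_x^2-u^2)=\frac{A_2}{m}\bigl(2u_xu_{xx}-2uu_x\bigr)=-2A_2u_x,
\]
so that $\partial_{ty}\ln\hat a_1=-A_2u_x$, which is precisely \eqref{u_x(y,t)}.

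The only mildly delicate point is handling the non-vanishing boundary value when integrating $m_t$ from $x$ to $+\infty$; the key fact is that $(u^2-u_x^2)m\to A_2^3$ as $x\to+\infty$ (since $u\to A_2$, $u_x\to 0$, $m\to A_2$), which contributes the term $A_2^2$ appearing in $\partial_t y$ and ultimately produces the exact cancellation that leaves a $y$-expression depending only on $u$ and $u_x$. Everything else is routine differentiation.
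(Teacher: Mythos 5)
Your argument is correct and follows essentially the same route as the paper: the paper differentiates the identity $x(y(x,t),t)=x$ to get $x_t(y,t)=\hat u^2-\hat u_x^2$ and then $x_{ty}=2A_2\hat u_x$, which, via $x=y-2\ln\hat a_1+A_2^2t$, is exactly your statement $\partial_t\ln\hat a_1|_y=\tfrac12(A_2^2-\hat u^2+\hat u_x^2)$ followed by the same $y$-differentiation using $\partial_y=\tfrac{A_2}{m}\partial_x$ and $m=u-u_{xx}$. The only cosmetic difference is that you work directly with $\ln\hat a_1$ and derive $\partial_t y|_x=-\tfrac{1}{A_2}(u^2-u_x^2)m$ explicitly from the conservation form, a step the paper states without proof.
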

\begin{proof}
Differentiating  the identity $x(y(x,t),t)=x$ w.r.t.~$t$ gives
\begin{equation}\label{dt-0}
0=\frac{d}{dt}\left(x(y(x,t),t)\right)=x_y(y,t)y_t(x,t)+x_t(y,t).
\end{equation}
From \eqref{shkala} it follows that
\begin{equation}\label{x_y}
x_y(y,t)=\frac{A_2}{\hat m(y,t)},
\end{equation}
where $\hat m(y,t)= m(x(y,t),t)$,
and 
\[
y_t(x,t)=-\frac{1}{A_2}( u^2- u_x^2)m.
\]
Substituting this and \eqref{x_y} into \eqref{dt-0} we obtain
\begin{equation}\label{x_t}
x_t(y,t)=\hat u^2(y,t)-\hat u_x^2(y,t).
\end{equation}
Further, differentiating \eqref{x_t} w.r.t.~$y$ we get
\begin{equation}\label{x_yt}
x_{ty}(y,t)=(\hat u^2(y,t)-\hat u_x^2(y,t))_x x_y(y,t)=2A_2\hat u_x(y,t)
\end{equation}
and thus 
\[
u_x(x(y,t),t)\equiv\hat u_x(y,t)=\frac{1}{2A_2}\partial_{ty}x(y,t)
=-\frac{1}{A_2}\partial_{ty}\ln \hat a_1(y,t).
\]
\end{proof}

\section{Concluding remarks}
We have presented the Riemann-Hilbert problem approach for the modified Camassa--Holm equation on the line with step-like boundary conditions. In the proposed formalism, we have taken the branch cut of $k_j(\lambda)$ along the half-lines $\Sigma_j$ (outer cuts), 
which is convenient since we extract the solution of the mCH equation exploiting 
the development of the solution of the RH problem at a point laying in the domain of analyticity. 
Notice that it is possible to formulate RH problem taking the  branch cut of $k_j(\lambda)$ to be the  segments $(-\frac{1}{A_j},\frac{1}{A_j})$ (inner cuts). In the case with inner cuts, the properties of Jost solutions are more conventional (two of the columns are analytic in the upper half-plane and other two in the lower half-plane), but, on the other hand, possible eigenvalues are located on 
the jump. 

The present paper is focused on the representation results while assuming the existence 
of a solution of problem \eqref{mCH1-ic} in certain functional classes.
To the best of our knowledge,  the question of existence  is still open. One of the ways to answering it is to appeal to functional analytic PDE techniques to obtain well-posedness
in appropriate functional classes.  However, very little is known for the cases of 
nonzero boundary conditions, particularly, for  backgrounds having different behavior at different 
infinities. Since 1980s, existence problems for integrable nonlinear PDE with step-like initial
conditions have been addressed using the classical Inverse Scattering Transform method \cite{K86}.
A more recent progress in this direction (in the case of the Korteweg-de Vries equation) has been reported in \cites{E09, E11, GR19} (see also \cite{E22}).  
Another way to show existence is to infer it from the RH problem formalism (see, e.g., \cite{FLQ21}
for the case of defocusing nonlinear Schr\"odinger equation), where a key point consists in establishing a solution of the associated RH problem and controlling its behavior w.r.t.
the spatial parameter.
For Camassa-Holm-type equations, where the RH problem formalism involves the change of the spatial 
variable, 
it is natural to study the existence of solution in both $(x,t)$ and $(y,t)$ scales. 
More precisely, the solvability problem splits into two problems: (i) the solvability of the RH
problem parametrized by $(y,t)$ and (ii) the bijectivity of the change of the spatial variable.
Particularly, it is possible that it is the change of variables that can be responsible of 
the wave breaking \cites{BSZ17,BKS20}. The solvability problem for problem \eqref{mCH1-ic}
in the current setting will be addressed elsewhere. 

Another interesting and important problem that 
can be addressed using the developed approach is the 
 investigation of the large-time behavior of the solutions of the Cauchy problem \eqref{mCH1-ic} adapting the nonlinear steepest descent method.  

\section{Acknowledgement.} 
IK acknowledges partial support of the National Academy of Sciences of Ukraine under Grant No. 0121U111968.


\appendix

\section{Sign-preserving property of $m$}\label{app:A}

Assume that $u(x,t)-A_1\in H^{3}(-\infty, a)$ and $u(x,t)-A_2\in H^{3}(a,\infty)$ 
for any real $a$ and for any $t\in (0,T)$, where $T\le +\infty$ is the maximal existing time.
Then Morrey's inequality   implies that 
$(mu_x)(s,x)$ is uniformly bounded for $0<s<t<T$, $x\in \mathbb{R}$.

\ifshort
\else
Indeed, then $u_x(x,t)\in H^{2}(\mathbb{R}_\pm)$, $m(x,t)-A_1\in H^{1}(\mathbb{R}_-)$ and $m(x,t)-A_2\in H^{1}(\mathbb{R}_+)$. Moreover, $H^{1}(\mathbb{R}_\pm)$ are Banach algebras (Adams, Sobolev spaces, theorem 5.23) and Morrey's theorem implies that $H^1(\mathbb{R}_\pm)$ are continuously embedded in $C_b^{0,\frac{1}{2}}(\overline{\mathbb{R}_\pm})$. Notice that $$\sup_{x\in\mathbb{R}}|m(t,x)u_x(t,x)|=\max\{\sup_{x\in\overline{\mathbb{R}_-}}|m(t,x)u_x(t,x)|,\sup_{x\in\overline{\mathbb{R}_+}
}|m(t,x)u_x(t,x)|\}.$$ Hence for each $t<T$, we have $$\sup_{x\in\mathbb{R}}|m(t,x)u_x(t,x)|\leq\max\{||m(t,x)u_x(t,x)||_{H^1(\mathbb{R}_+)},||m(t,x)u_x(t,x)||_{H^1(\mathbb{R}_-)}. $$ 
Consider $||m(t,x)u_x(t,x)||_{H^1(\mathbb{R}_+)}$. Note that $m(t,x)=f_2(t,x)+A_2$, where $f_2(t,x)\in H^1(\mathbb{R}_+)$. Hence $||m(t,x)u_x(t,x)||_{H^1(\mathbb{R}_+)}\leq||f_2(t,x)u_x(t,x)||_{H^1(\mathbb{R}_+)}+A_2||u_x(t,x)||_{H^1(\mathbb{R}_+)}<\infty$.
Analogously, consider $||m(t,x)u_x(t,x)||_{H^1(\mathbb{R}_-)}$. Note that $m(t,x)=f_1(t,x)+A_1$, where $f_1(t,x)\in H^1(\mathbb{R}_-)$. Hence $||m(t,x)u_x(t,x)||_{H^1(\mathbb{R}_+)}\leq||f_2(t,x)u_x(t,x)||_{H^1(\mathbb{R}_-)}+A_1||u_x(t,x)||_{H^1(\mathbb{R}_-)}<\infty$.

Hence we can conclude that for arbitrary $T'<T$ $$\sup_{x\in\mathbb{R},0\leq t<T'}|m(t,x)u_x(t,x)|<\infty.$$
Since otherwise we could pick a sequence $(x_n,t_n)$ such that $|m(t,x)u_x(t_n,x_n)|\to\infty$, and switching to subsequence we would get a contradiction.
\fi

Consider the Cauchy problem for $q(t,x)$:
\begin{subequations}\label{CP}
\begin{alignat}{4}
&\frac{\dd q}{\dd t}=(u^2-u_x^2)(q(t,x),t),\quad t\in (0,T), ~x\in\mathbb{R},\\ \label{CP-ic}
&q(0,x)=x,\quad x\in\mathbb{R},
\end{alignat}
\end{subequations}
where $u(x,t)$ solves \eqref{mCH1-ic}.
Differentiating  \eqref{CP} with respect to $x$ leads to  
\begin{subequations}\label{difCP}
\begin{alignat}{4}
&\frac{\dd}{\dd t} q_x(t,x)=(2mu_x)(q(t,x),t)q_x(t,x),\\\label{difCP-ic}
&q_x(0,x)=1,\quad x\in\mathbb{R}.
\end{alignat}
\end{subequations}
It follows that 
\begin{equation}\label{q_x}
q_x(t,x)=\eul^{2\int_{0}^{t}(mu_x)(q(s,x),s)\dd s}>0  
\end{equation}
and, moreover, 
\begin{equation}\label{q_x-est}
   \eul^{k(t)}\leq q_x(t,x)\leq \eul^{K(t)},\quad t\in [0,T)
\end{equation}
for some $k(t)$ and $K(t)$.

Now observe that  from \eqref{mCH-1} and \eqref{CP} it follows that
 $\frac{\dd}{\dd t} \left[m(q(t,x),t)q_x(t,x)\right]=0$.
 Indeed,
 \begin{align*}
   \frac{d}{dt} & \left[m(q(t,x),t)q_x(t,x) \right] \\
  &  = \left[m_t(q(t,x),t)+m_x(q(t,x),t)q_t(t,x) \right](q(t,x),t)q_x(t,x) + m(q(t,x),t)q_{tx}(t,x) \\
  & = \left[-(u^2-u_x^2)_xm-(u^2-u_x^2)m_x +m_x(u^2-u_x^2) \right](q(t,x),t)q_x(t,x)\\ 
  & +2(m^2u_x)(q(t,x),t)q_x(t,x) 
 =0.
  \end{align*}
 Thus, due to \eqref{CP-ic} and \eqref{difCP-ic}, we have
\begin{equation*}
    m(t,q(t,x))q_x(t,x)=m(0,q(0,x))q_x(0,x)=m(0,x).
\end{equation*}
Hence, if $m(x,0)>0$, then  $m(q(t,x),t)>0$ for all $t\in[0, T)$, $x\in\mathbb{R}$.
Since $q_x(t,x)>0$, we have that  $q(t,x)$ is strictly increasing function. Moreover, integrating \eqref{q_x-est} w.r.t. $x$, we also have $\lim_{x\to\pm\infty}q(t,x)=\pm\infty$. Hence $q(x,t)$ is  one-to-one from $\mathbb{R}$ onto $\mathbb{R}$  and thus $m(t,x)>0$ for all $t\in[0, T)$, $x\in\mathbb{R}$.
\section{The case $A_2<A_1$}\label{app:B}

Notice that in this case $\Sigma_2\subset\Sigma_1$ and $\Sigma_0=[-\frac{1}{A_2},-\frac{1}{A_1}]\cup[\frac{1}{A_1},\frac{1}{A_2}]$.

We define $\Phi_i$ and $\tilde \Phi_i$ as in \eqref{eq_phi} and \eqref{eq}, and introduce the scattering matrices $s(\lambda_\pm)$, this time  for $\lambda\in\dot{\Sigma}_2$,
as  matrices relating $\Phi_1$ and $\Phi_2$ (for brevity we keep for it  the same notation $s$):
\begin{subequations}\label{scat<}
\begin{alignat}{4}
&\Phi_1(x,t,\lambda_\pm)=\Phi_2(x,t,\lambda_\pm)s(\lambda_\pm),\qquad\lambda\in\dot{\Sigma}_2\end{alignat}
\end{subequations}
with $\det s(\lambda_\pm)=1$. In turn, $\tilde\Phi_1$ and $\tilde\Phi_2$ are related by
\begin{subequations}\label{scat_<}
\begin{alignat}{4}
&D_1^{-1}(\lambda_\pm)\tilde\Phi_1(x,t,\lambda_\pm)=D_2^{-1}(\lambda_\pm)\tilde\Phi_2(x,t,\lambda_\pm)\eul^{-Q_2(x,t,\lambda_\pm)}s(\lambda_+)\eul^{Q_1(x,t,\lambda_\pm)},\qquad\lambda\in\dot{\Sigma}_2.
\end{alignat}
\end{subequations}

The scattering coefficients $s_{ij}$ can be expressed as in \eqref{scatcoeff}.
However, in this case, \eqref{scatcoeff-a} implies that  $s_{11}(\lambda)$ can be analytically extended to $\mathbb{C}\setminus\Sigma_1$ and defined on the upper and lower parts of $\dot\Sigma_1$, and, since $\Phi_2^{(2)}$ is analytic in $\mathbb{C}\setminus\Sigma_2$ and $\Phi_1^{(2)}$ is defined on the upper and lower sides of $\Sigma_1$, $s_{12}(\lambda)$ can be extended by \eqref{scatcoeff-c} to the lower and upper sides of $\dot{\Sigma}_1$. Thus 
the following relations hold also on $\dot\Sigma_0$:
\begin{subequations}\label{scat-col<}
\begin{alignat}{4}
    \Phi_2^{(2)}(x,t,\lambda_\pm)=s_{11}(\lambda_\pm)\Phi_1^{(2)}(x,t,\lambda_\pm)-s_{12}(\lambda_\pm)\Phi_1^{(1)}(x,t,\lambda_\pm), \quad\lambda\in\dot\Sigma_0.
\end{alignat}
\end{subequations}
and, respectively,
\begin{small}
\begin{subequations}\label{scat-col_<}
\begin{alignat}{4}
   \hspace{-2mm}(D_2^{-1}\Phi_2^{(2)})(x,t,\lambda_\pm)=\tilde s_{11}(x,t,\lambda_\pm)(D_1^{-1}\Phi_1^{(2)})(x,t,\lambda_\pm)-\tilde s_{12}(x,t,\lambda_\pm)(D_1^{-1}\Phi_1^{(1)})(x,t,\lambda_\pm), ~\lambda\in\dot\Sigma_0,
\end{alignat}
\end{subequations}
\end{small} where $\tilde s(x,t,\lambda_\pm):=\eul^{-Q_2(x,t,\lambda_\pm)}s(\lambda_\pm)\eul^{Q_1(x,t,\lambda_\pm)}$.

\subsection{Symmetries}

\ifshort
The symmetries are similar to  the case $A_1<A_2$. In particular, 

\begin{enumerate}
    \item
    \begin{equation}\label{sym_det}
        |s_{11}(\lambda_+)|^2-|s_{12}(\lambda_+)|^2=1,\quad \lambda\in \dot\Sigma_2.
    \end{equation}

    \item 
    \begin{equation}\label{sym_det_cor}
        \big| \frac{s_{12}(\lambda_+)}{s_{11}(\lambda_+)} \big|\leq 1,\quad \lambda\in \dot\Sigma_2
    \end{equation}
\ifshort
\else
    Note that $\big| \frac{s_{12}(\lambda_+)}{s_{11}(\lambda_+)} \big|= 1$ for $\lambda\in \dot\Sigma_2$ iff $s_{11}(\lambda_+)=\infty$.
\fi   
    \item 
    \begin{subequations}\label{sym_s11<}
    \begin{alignat}{3} \label{sym_s11-a<}
    s_{11}(\lambda_+)&=s_{22}(\lambda_-),\quad  \lambda\in\dot\Sigma_2,\\\label{sym_s11-b<}
   s_{11}(\lambda_+)&=\ii s_{12}(\lambda_-),\quad  \lambda\in\dot\Sigma_0,\\\label{sym_s11-c<}
    s_{11}(\lambda_-)&=-\ii s_{12}(\lambda_+),\quad  \lambda\in\dot\Sigma_0.
    \end{alignat}
    \end{subequations}
    
    \item \begin{equation}\label{sym_sigma_0_<}
        \big| \frac{s_{12}(\lambda_+)}{s_{11}(\lambda_+)} \big|=1,\quad \lambda\in \dot\Sigma_0
    \end{equation}
    
    \item \begin{equation}\label{sym_minus<} 
    (D_j^{-1}\tilde\Phi_j)((-\lambda)_-)=-\sigma_3(D_j^{-1}\tilde\Phi_j)(\lambda_+)\sigma_3, \quad \lambda_+\in\dot\Sigma_j.
    \end{equation}
    
    \item 
    \begin{equation}\label{sym_barbar_notin_Sigma_i<}
    \overline{(D_j^{-1}\tilde\Phi_j^{(j)})(\overline{\lambda})}=-(D_j^{-1}\tilde\Phi_j^{(j)})(\lambda), \quad \lambda\in\mathbb{C}\setminus\Sigma_j,
    \end{equation}
    
    \item 
    \begin{subequations}\label{sym_minus_<} 
    \begin{alignat}{4}\label{sym_minus_a<} 
    (D_1^{-1}\tilde\Phi_1^{(1)})(-\lambda)&=-\sigma_3(D_1^{-1}\tilde\Phi_1^{(1)})(\lambda), \quad \lambda\in\mathbb{C}\setminus\Sigma_1,\\ \label{sym_minus_b<} 
    (D_2^{-1}\tilde\Phi_2^{(2)})(-\lambda)&=\sigma_3(D_2^{-1}\tilde\Phi_2^{(2)})(\lambda), \quad \lambda\in\mathbb{C}\setminus\Sigma_2.
    \end{alignat}
    \end{subequations}
    
    \item 
    \begin{subequations}\label{sym_D-tilde-phi<} 
    \begin{alignat}{4}\label{sym_D-tilde-phi-a<}
    D_j^{-1}(\lambda_-)\tilde\Phi_j^{(j)}(\lambda_-)&=(-\ii D_j^{-1}(\lambda_+)\tilde\Phi_j(\lambda_+)\sigma_1)^{(j)},\quad \lambda\in\dot\Sigma_2,\\ \label{sym_D-tilde-phi-b<}
   D_1^{-1}(\lambda_-)\tilde\Phi_1^{(1)}(\lambda_-)&=(-\ii D_1^{-1}(\lambda_+)\tilde\Phi_1(\lambda_+)\sigma_1)^{(1)},\quad \lambda\in\dot\Sigma_0,\\ \label{sym_D-tilde-phi-c<}
   D_2^{-1}(\lambda_-)\tilde\Phi_2^{(2)}(\lambda_-)&=D_2^{-1}(\lambda_+)\tilde\Phi_2^{(2)}(\lambda_+),\quad \lambda\in\dot\Sigma_0.
\end{alignat}
\end{subequations}

\end{enumerate}

\else
Similarly to the case $A_1<A_2$ we can prove the following symmetries.

\textit{First symmetry: $\lambda \longleftrightarrow -\lambda$.}

\begin{proposition}\label{prop:sym_Phi_minus_notin_sigma_i<} The following symmetries hold:
    \begin{subequations}\label{sym_Phi_minus_notSigma_2<}
    \begin{alignat}{3}\label{sym_Phi_minus_notSigma_2-a<}
    \Phi_1^{(1)}(\lambda)&=-\sigma_3\Phi_1^{(1)}(-\lambda),\quad \lambda\in\mathbb{C}\setminus\Sigma_1\\ \label{sym_Phi_minus_notSigma_2-b<}
    \Phi_2^{(2)}(\lambda)&=\sigma_3\Phi_2^{(2)}(-\lambda),\quad \lambda\in\mathbb{C}\setminus\Sigma_2.
    \end{alignat}
    \end{subequations}
\end{proposition}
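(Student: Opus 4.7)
The plan is to adapt, essentially verbatim, the proof of Proposition \ref{prop:sym_Phi_minus_notin_sigma_i}, because that argument treats each index $j=1,2$ separately and never invokes the relative ordering of $A_1$ and $A_2$. In the present setting only the inclusion $\Sigma_2\subset\Sigma_1$ is reversed, but this affects neither the Volterra equation \eqref{eq_phi} defining $\Phi_j^{(j)}$ nor the parity properties of the Lax coefficients.

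First, I would verify by inspection of \eqref{U} and \eqref{V} that the parity relations $\sigma_3 U(x,t,\lambda)\sigma_3 \equiv U(x,t,-\lambda)$ and $\sigma_3 V(x,t,\lambda)\sigma_3 \equiv V(x,t,-\lambda)$ hold, the diagonal entries of $U,V$ being even in $\lambda$ and the off-diagonal entries being odd. Consequently, whenever $\Phi_j^{(j)}(x,t,\lambda)$ solves the Lax pair \eqref{Lax}, so does $\sigma_3 \Phi_j^{(j)}(x,t,-\lambda)$.

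Next, to identify the two candidate solutions I would match their asymptotic behaviour as $x\to(-1)^j\infty$. Since $\Phi_j=D_j^{-1}\tilde\Phi_j \eul^{-Q_j}$ with $\tilde\Phi_j\to I$ in that limit, these asymptotics reduce to expressions in $D_j^{-1}(\pm\lambda)$ and $\eul^{-Q_j(\pm\lambda)}$. By \eqref{sym_k_i-a} we have $k_j(-\lambda)=k_j(\lambda)$, so the explicit form \eqref{D-j} of $D_j^{-1}$ yields $D_j^{-1}(-\lambda)=-\sigma_3 D_j^{-1}(\lambda)\sigma_3$, while the definition \eqref{p_i} of $p_j$ gives $Q_j(-\lambda)=Q_j(\lambda)$. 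Reading off the $j$-th column then produces the signs $-$ for $j=1$ and $+$ for $j=2$, matching \eqref{sym_Phi_minus_notSigma_2-a<} and \eqref{sym_Phi_minus_notSigma_2-b<}.

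I do not anticipate any genuine obstacle: no step above is sensitive to the relative size of $A_1$ and $A_2$, only to the analytic and asymptotic setup of $\Phi_j^{(j)}$ on $\mathbb{C}\setminus\Sigma_j$, which is preserved in the present regime. Uniqueness of the solution of the Volterra equation \eqref{eq_phi} with the prescribed asymptotics at $(-1)^j\infty$ then upgrades the matched asymptotics to the claimed identities on all of $\mathbb{C}\setminus\Sigma_j$.
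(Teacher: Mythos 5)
Your proposal is correct and follows essentially the same route as the paper: the paper proves the $A_1<A_2$ version by observing $\sigma_3U(\lambda)\sigma_3\equiv U(-\lambda)$, $\sigma_3V(\lambda)\sigma_3\equiv V(-\lambda)$ and matching asymptotics as $x\to(-1)^j\infty$ via \eqref{sym_k_i-a}, and for the case $A_2<A_1$ it simply notes the argument carries over unchanged, exactly as you argue. Your explicit identities $D_j^{-1}(-\lambda)=-\sigma_3D_j^{-1}(\lambda)\sigma_3$ and $Q_j(-\lambda)=Q_j(\lambda)$ are the same ones the paper records in the subsequent corollary.
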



\begin{corollary}
We have
\begin{enumerate}
    \item \begin{equation}\label{sym_s11_minus_notSigma_2<}
    s_{11}(-\lambda)=s_{11}(\lambda), \quad  \lambda\in\textcolor{red}{\mathbb{C}\setminus\Sigma_1}.  
    \end{equation}
    
    \item 
    \begin{subequations}\label{sym_tildePhi_minus_notSigma_2<}
    \begin{alignat}{3}
    \tilde \Phi_1^{(1)}(\lambda)=\sigma_3\tilde         \Phi_1^{(1)}(-\lambda),\quad \lambda\in\mathbb{C}\setminus\Sigma_1, \\
    \tilde \Phi_2^{(2)}(\lambda)=-\sigma_3\tilde     \Phi_2^{(2)}(-\lambda),\quad \lambda\in\mathbb{C}\setminus\Sigma_2.
    \end{alignat}
    \end{subequations}
    
    \item 
\begin{subequations}\label{sym_minus_<} 
\begin{alignat}{4}\label{sym_minus_a<} 
(D_1^{-1}\tilde\Phi_1^{(1)})(-\lambda)&=-\sigma_3(D_1^{-1}\tilde\Phi_1^{(1)})(\lambda), \quad \lambda\in\mathbb{C}\setminus\Sigma_1,\\ \label{sym_minus_b<} 
(D_2^{-1}\tilde\Phi_2^{(2)})(-\lambda)&=\sigma_3(D_2^{-1}\tilde\Phi_2^{(2)})(\lambda), \quad \lambda\in\mathbb{C}\setminus\Sigma_2.
\end{alignat}
\end{subequations}
\end{enumerate}
\end{corollary}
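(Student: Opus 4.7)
The plan is to mimic the argument used for the analogous corollary in the case $A_1<A_2$, since the symmetries in Proposition~\ref{prop:sym_Phi_minus_notin_sigma_i<} have exactly the same form as \eqref{sym_Phi_minus_notSigma_2}; only the domains of analyticity of certain scattering coefficients change (because now $\Sigma_2\subset\Sigma_1$), and this affects only where the statements can be read as identities of meromorphic functions rather than the algebraic derivation itself.

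For item (1), I would substitute \eqref{sym_Phi_minus_notSigma_2<} into the determinantal formula \eqref{scatcoeff-a}: using the multilinearity of the determinant together with $\det\sigma_3=-1$, a direct computation gives
\[
s_{11}(-\lambda)=\det\bigl(\Phi_1^{(1)}(-\lambda),\Phi_2^{(2)}(-\lambda)\bigr)=\det\bigl(-\sigma_3\Phi_1^{(1)}(\lambda),\sigma_3\Phi_2^{(2)}(\lambda)\bigr)=s_{11}(\lambda).
\]
The only subtlety compared with the previous case is the domain: here $\Phi_2^{(2)}$ is analytic on $\mathbb{C}\setminus\Sigma_2$ and $\Phi_1^{(1)}$ on $\mathbb{C}\setminus\Sigma_1$, so the identity holds as an equality of analytic functions on $\mathbb{C}\setminus\Sigma_1$ (since $\Sigma_2\subset\Sigma_1$), which is why the corrected domain is $\mathbb{C}\setminus\Sigma_1$.

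For item (2), the key observation is that \eqref{sym_k_i-a} gives $k_j(-\lambda)=k_j(\lambda)$, from which one reads off, directly from \eqref{D-j}, the identity $D_j^{-1}(-\lambda)=-\sigma_3 D_j^{-1}(\lambda)\sigma_3$ (equivalently $D_j(-\lambda)=-\sigma_3 D_j(\lambda)\sigma_3$). Combined with $Q_j(-\lambda)=Q_j(\lambda)$ (immediate from \eqref{p_i}) and the relation $\tilde\Phi_j=D_j\Phi_j\eul^{Q_j}$, one computes columnwise: for the $j$-th column the diagonal factor $\eul^{\pm p_j}$ is the same at $\lambda$ and $-\lambda$, the two $\sigma_3$'s from $D_j$ and from \eqref{sym_Phi_minus_notSigma_2<} combine, and one obtains $\tilde\Phi_1^{(1)}(\lambda)=\sigma_3\tilde\Phi_1^{(1)}(-\lambda)$ and $\tilde\Phi_2^{(2)}(\lambda)=-\sigma_3\tilde\Phi_2^{(2)}(-\lambda)$.

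Item (3) is an immediate consequence of (2) combined once more with $D_j^{-1}(-\lambda)=-\sigma_3 D_j^{-1}(\lambda)\sigma_3$: one conjugates by $D_j^{-1}$, the two $\sigma_3$ matrices telescope via $\sigma_3^2=I$, and the correct signs fall out. I do not anticipate any real obstacle; the only point that requires care is bookkeeping of the domain of validity for claim (1), because swapping the roles of the two backgrounds exchanges the analyticity domains of $s_{11}$ and $s_{22}$ relative to the $A_1<A_2$ case.
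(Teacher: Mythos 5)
Your proposal is correct and follows essentially the same route as the paper: the paper proves the analogous corollary in Section~2 by substituting the column symmetries into \eqref{scatcoeff-a}, using $D_j^{-1}(-\lambda)=-\sigma_3D_j^{-1}(\lambda)\sigma_3$ and $Q_j(-\lambda)=Q_j(\lambda)$, and Appendix~B simply transfers that argument. Your remark that the domain in item (1) becomes $\mathbb{C}\setminus\Sigma_1$ because now $\Sigma_2\subset\Sigma_1$ is exactly the point the paper flags with its corrected domain.
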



\begin{proposition}\label{prop:sym_Phi_minus_sigma_i<} The following symmetry holds
    \begin{equation}\label{sym_Phi_minus_sigma_i<}
    \Phi_j(\lambda_+)=-\sigma_3\Phi_j(-\lambda_+)\sigma_3, \qquad \lambda\in\dot\Sigma_j.
\end{equation}
Recall that $-\lambda_+=(-\lambda)_-$.
\end{proposition}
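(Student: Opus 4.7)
The plan is to repeat, essentially verbatim, the proof of Proposition \ref{prop:sym_Phi_minus_sigma_i} from the main text, observing that the argument is purely algebraic and involves only the Lax pair coefficients $U,V$ and the background solutions $\Phi_{0,j}$, so it is insensitive to whether $A_1<A_2$ or $A_2<A_1$.

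First, I would check directly from the explicit expressions \eqref{U} and \eqref{V} that
\[
\sigma_3 U(x,t,\lambda)\sigma_3 \equiv U(x,t,-\lambda), \qquad \sigma_3 V(x,t,\lambda)\sigma_3 \equiv V(x,t,-\lambda).
\]
Both $U$ and $V$ are rational (hence single-valued) functions of $\lambda$, so no branch-cut ambiguity enters at this stage. It follows that if $\Phi_j(x,t,\lambda_+)$ solves the Lax system \eqref{Lax}, then so does $\sigma_3\Phi_j(x,t,-\lambda_+)\sigma_3$, for every $\lambda\in\dot\Sigma_j$.

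Next, to identify these two solutions up to a multiplicative constant matrix, I would compare their asymptotic behaviors as $x\to(-1)^j\infty$. By the Volterra equation \eqref{eq_phi}, one has $\Phi_j(x,t,\lambda)\sim \Phi_{0,j}(x,t,\lambda)=D_j^{-1}(\lambda)\eul^{-Q_j(x,t,\lambda)}$ in that limit. Using the evenness $k_j(-\lambda)=k_j(\lambda)$ (relation \eqref{sym_k_i-a}), together with the explicit form \eqref{D-j} of $D_j^{-1}$ and the fact that $p_j(x,t,\lambda)$ depends on $\lambda$ only through $k_j(\lambda)$, I would verify the two identities
\[
D_j^{-1}(-\lambda)=-\sigma_3 D_j^{-1}(\lambda)\sigma_3, \qquad Q_j(x,t,-\lambda)=Q_j(x,t,\lambda).
\]
These yield $\Phi_{0,j}(x,t,-\lambda_+)=-\sigma_3\Phi_{0,j}(x,t,\lambda_+)\sigma_3$, which then forces $\Phi_j(\lambda_+)=-\sigma_3\Phi_j(-\lambda_+)\sigma_3$ on $\dot\Sigma_j$ by uniqueness of the Jost solution.

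The only delicate point — not really an obstacle, but worth flagging — is the interpretation of $-\lambda_+$: in the $\pm$ convention one has $-\lambda_+=(-\lambda)_-$, so one is comparing boundary values on opposite sides of the cut at the opposite point. The argument nevertheless goes through because $U$ and $V$ have no jump across $\Sigma_j$ (they are globally rational in $\lambda$), and because the relevant symmetry of $k_j$ can be read off from \eqref{sym_k_i-a} before taking any boundary limits. Thus the proof is structurally identical to that of Proposition \ref{prop:sym_Phi_minus_sigma_i}, and the differences between the orderings $A_1<A_2$ and $A_2<A_1$ play no role here; they will only matter later when analyzing the scattering relations, since the cuts $\Sigma_1$ and $\Sigma_2$ swap their nesting.
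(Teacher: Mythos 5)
Your proof is correct and follows essentially the same route as the paper: the paper's Appendix~\ref{app:B} simply refers back to the argument for Proposition~\ref{prop:sym_Phi_minus_sigma_i}, which is exactly what you reproduce (the conjugation identities for $U,V$, the absence of jumps of $U,V$ across $\Sigma_j$, and the comparison of asymptotics via $D_j^{-1}(-\lambda)=-\sigma_3D_j^{-1}(\lambda)\sigma_3$ and $Q_j(-\lambda)=Q_j(\lambda)$ using \eqref{sym_k_i-a}). Your observation that the ordering of $A_1$ and $A_2$ is irrelevant here, and your handling of the boundary-value convention $-\lambda_+=(-\lambda)_-$, match the paper's intent.
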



\begin{corollary} We have
\begin{enumerate}
        \item $s(\lambda_+)=\sigma_3s(-\lambda_+)\sigma_3,~ \lambda\in\dot\Sigma_1,$
        or, more precisely, for $\lambda\in\textcolor{red}{\dot\Sigma_2}$
        \begin{subequations}\label{sym_s_minus_sigma1<}
        \begin{alignat}{3}
        s_{11}(\lambda_+)&=s_{11}(-\lambda_+),\\
        s_{12}(\lambda_+)&=-s_{12}(-\lambda_+),\\
        s_{21}(\lambda_+)&=-s_{21}(-\lambda_+),\\
        s_{22}(\lambda_+)&=s_{22}(-\lambda_+).
        \end{alignat}
        \end{subequations} 
        
        \item \begin{equation}\label{sym_tildePhi_minus_sigma_i<}
        \tilde\Phi_j(\lambda_+)=\sigma_3\tilde\Phi_j(-\lambda_+)\sigma_3, \qquad \lambda\in\dot\Sigma_j.
        \end{equation}
        
        \item \begin{equation}\label{sym_minus<} 
        (D_j^{-1}\tilde\Phi_j)((-\lambda)_-)=-\sigma_3(D_j^{-1}\tilde\Phi_j)(\lambda_+)\sigma_3, \quad \lambda_+\in\dot\Sigma_j.
        \end{equation}

\end{enumerate}
\end{corollary}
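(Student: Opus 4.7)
The plan is to derive the three items of this corollary as direct consequences of the $\Phi_j$-symmetry in Proposition \ref{prop:sym_Phi_minus_sigma_i<}, following the same template as the analogous corollary proved in the main text for the case $A_1<A_2$. For item (1) I substitute the $\Phi_j$-symmetry into the scattering relation \eqref{scat<}; for item (2) I transport that symmetry from $\Phi_j$ to $\tilde\Phi_j$ via the already-computed transformation rules for $D_j$ and $Q_j$; for item (3) I combine item (2) with the $D_j$-symmetry evaluated at the opposite point, using the basic identity $(-\lambda)_- = -\lambda_+$ recorded in the notations.

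First I would insert \eqref{sym_Phi_minus_sigma_i<} into \eqref{scat<}: writing $\Phi_1(\lambda_+) = \Phi_2(\lambda_+) s(\lambda_+)$ at both $\lambda_+$ and $-\lambda_+$ and applying the symmetry, the outer factor $-\sigma_3$ cancels and the inner factor $\sigma_3$ is absorbed into the scattering matrix, giving $s(\lambda_+) = \sigma_3 s(-\lambda_+)\sigma_3$ for $\lambda\in\dot\Sigma_2$. Reading this off entrywise—conjugation by $\sigma_3$ preserves diagonal entries and flips signs of off-diagonal ones—yields the component form \eqref{sym_s_minus_sigma1<}. For item (2), I would use that \eqref{sym_k_i-a} implies $D_j^{-1}(-\lambda) = -\sigma_3 D_j^{-1}(\lambda)\sigma_3$ and $Q_j(-\lambda) = Q_j(\lambda)$; then, since $\tilde\Phi_j = D_j\Phi_j\eul^{Q_j}$, the two minus signs contributed by $D_j$ and $\Phi_j$ cancel, and because $\sigma_3$ commutes with the diagonal exponential $\eul^{Q_j}$, the identity \eqref{sym_tildePhi_minus_sigma_i<} follows.

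For item (3), I would multiply the $D_j$-identity $D_j^{-1}(-\lambda_+) = -\sigma_3 D_j^{-1}(\lambda_+)\sigma_3$ by the $\tilde\Phi_j$-identity $\tilde\Phi_j(-\lambda_+) = \sigma_3 \tilde\Phi_j(\lambda_+)\sigma_3$; the central factor $\sigma_3^2 = I$ collapses, leaving a single leading $-\sigma_3$ and trailing $\sigma_3$, which is precisely \eqref{sym_minus<}. Each of the three manipulations is a one-line matrix calculation.

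The main obstacle I anticipate is not conceptual but purely bookkeeping of signs: each of $D_j^{-1}$ and $\Phi_j$ contributes a factor of $-\sigma_3$ under $\lambda\mapsto-\lambda$, while $\tilde\Phi_j$ inherits $+\sigma_3$ (because the two minus signs cancel), so the apparent sign discrepancy between items (2) and (3) must be tracked carefully. An additional subtlety that differentiates the present case $A_2<A_1$ from the main-text case is that the roles of $\Sigma_1$ and $\Sigma_2$ are exchanged in the scattering relation—the scattering identity \eqref{scat<} now lives on $\dot\Sigma_2$ rather than $\dot\Sigma_1$—so one must verify that the domains $\dot\Sigma_2$ (for $s$) and $\dot\Sigma_j$ (for the individual eigenfunctions $\tilde\Phi_j$) remain the correct ones on which the symmetries actually hold.
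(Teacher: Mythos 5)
Your proposal is correct and follows exactly the paper's own route: item (1) by substituting the $\Phi_j$-symmetry into the scattering relation, item (2) via $D_j^{-1}(-\lambda_+)=-\sigma_3D_j^{-1}(\lambda_+)\sigma_3$, $Q_j(-\lambda_+)=Q_j(\lambda_+)$ and the connection $\tilde\Phi_j=D_j\Phi_j\eul^{Q_j}$, and item (3) by combining the $D_j$-identity with item (2) together with $(-\lambda)_-=-\lambda_+$. Your remark that in the case $A_2<A_1$ the scattering relation lives on $\dot\Sigma_2$ rather than $\dot\Sigma_1$ is precisely the adjustment the appendix makes relative to the main-text corollary, so nothing is missing.
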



\textit{Second symmetry $\lambda \longleftrightarrow -\overline\lambda$.}

\begin{proposition}\label{prop:sym-Phi-(minus)<} The following symmetry holds
\begin{equation}\label{sym_Phi_(minus)<}
        \Phi_j(\lambda_+)=\sigma_3\Phi_j((-\lambda)_+)\sigma_2, \qquad \lambda\in\dot\Sigma_j.
\end{equation}
\end{proposition}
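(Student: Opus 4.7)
The plan is to replicate the argument used for Proposition \ref{prop:sym-Phi-(minus)} in the main text, because nothing in that argument is sensitive to the ordering of $A_1$ and $A_2$; the statement, the objects involved, and the needed symmetry relations are all identical. First I would record that $U$ and $V$, as given by \eqref{U} and \eqref{V}, depend on $\lambda$ as single-valued (rational) functions, so evaluating at $\lambda_+$ or $(-\lambda)_+$ amounts to simply evaluating the underlying $\lambda$-rational matrices at $\lambda$ or $-\lambda$. A direct inspection then gives $\sigma_3 U(\lambda_+)\sigma_3 = U((-\lambda)_+)$ and $\sigma_3 V(\lambda_+)\sigma_3 = V((-\lambda)_+)$ for every $\lambda \in \dot\Sigma_j$. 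It follows that whenever $\Phi_j(x,t,\lambda_+)$ solves \eqref{Lax}, the matrix $\sigma_3\Phi_j(x,t,(-\lambda)_+)\sigma_2$ does so too: the left factor $\sigma_3$ turns a solution evaluated at $(-\lambda)_+$ into one evaluated at $\lambda_+$, and right multiplication by the constant matrix $\sigma_2$ is merely an allowed column recombination.

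Next I would pin down the normalization by matching the asymptotics as $x\to(-1)^j\infty$. Since $\Phi_j = D_j^{-1}(\lambda)\tilde\Phi_j \eul^{-Q_j}$ with $\tilde\Phi_j\to I$, the leading behaviour is governed by $D_j^{-1}(\lambda_+)\eul^{-Q_j(x,t,\lambda_+)}$. Using \eqref{sym_k_i-b}, namely $k_j((-\lambda)_+) = -k_j(\lambda_+)$, one gets $Q_j((-\lambda)_+) = -Q_j(\lambda_+)$, and combining this with $\sigma_2\sigma_3\sigma_2 = -\sigma_3$ yields $\eul^{-Q_j((-\lambda)_+)}\sigma_2 = \sigma_2\eul^{-Q_j(\lambda_+)}$. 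The symmetry is therefore reduced to the purely algebraic identity
\[
\sigma_3 D_j^{-1}((-\lambda)_+)\sigma_2 = D_j^{-1}(\lambda_+),
\qquad \lambda\in\dot\Sigma_j,
\]
equivalently $D_j(\lambda_+)\sigma_3 D_j^{-1}((-\lambda)_+) = \sigma_2$, which is exactly the relation already invoked in the main text.

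To verify that identity I would substitute the explicit form \eqref{D-j} of $D_j$ and use the square-root relation
\[
\sqrt{\tfrac{1}{\ii A_j k_j(\lambda_+)}-1}\,\sqrt{-\tfrac{1}{\ii A_j k_j(\lambda_+)}-1} = -\tfrac{\lambda_+}{k_j(\lambda_+)}, \qquad \lambda\in\dot\Sigma_j,
\]
which is the same identity that powered the proof of Proposition \ref{prop:sym-Phi-(minus)}. The only delicate point, and the step I expect to be the main obstacle, is the careful bookkeeping of which branch of each square root is selected on each side of $\dot\Sigma_j$: the factor $\sqrt{\tfrac{1}{\ii A_j k_j}-1}$ inside $D_j$ and the pairing of its two arguments at $\lambda_+$ and $(-\lambda)_+$ must be consistent with the branch conventions fixed after \eqref{D-j}. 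Once this branch bookkeeping is done the identity drops out, and the symmetry \eqref{sym_Phi_(minus)<} follows.
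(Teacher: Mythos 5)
Your proposal is correct and takes essentially the same route as the paper: the appendix states this proposition without proof, deferring to the argument of Proposition \ref{prop:sym-Phi-(minus)} in the main text, and your write-up is precisely that argument (single-valuedness of $U,V$ giving $\sigma_3 U(\lambda_+)\sigma_3=U((-\lambda)_+)$, matching asymptotics as $x\to(-1)^j\infty$ via $k_j((-\lambda)_+)=-k_j(\lambda_+)$, and the algebraic identity $D_j(\lambda_+)\sigma_3 D_j^{-1}((-\lambda)_+)=\sigma_2$ resting on the stated square-root relation), with the correct observation that nothing in it depends on the ordering of $A_1$ and $A_2$.
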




\begin{corollary} We have
\begin{enumerate}
    \item $s(\lambda_+)=\sigma_2s((-\lambda)_+)\sigma_2,~ \lambda\in\textcolor{red}{\dot\Sigma_2},$
or, more precisely, for $\lambda\in \textcolor{red}{\dot\Sigma_2}$
\begin{subequations}\label{sym_s_(minus)_sigma1<}
\begin{alignat}{3}
s_{11}(\lambda_+)&=s_{22}((-\lambda)_+),\\
s_{12}(\lambda_+)&=-s_{21}((-\lambda)_+),\\
s_{21}(\lambda_+)&=-s_{12}((-\lambda)_+),\\\label{sym_s_(minus)_sigma1-d<}
s_{22}(\lambda_+)&=s_{11}((-\lambda)_+).
\end{alignat}
\end{subequations}

    \item For $\lambda\in \textcolor{red}{\dot\Sigma_2}$ 
\begin{subequations}\label{sym_s_(minus)_sigma1_2<}
\begin{alignat}{3}
s_{22}(\lambda_+)&=s_{11}(\lambda_-),\\
s_{21}(\lambda_+)&=s_{12}(\lambda_-),\\
s_{12}(\lambda_+)&=s_{21}(\lambda_-),\\
s_{11}(\lambda_+)&=s_{22}(\lambda_-).
\end{alignat}
\end{subequations}

\item 
\begin{equation}\label{sym_tildePhi_(minus)_sigma_i<}
    \tilde\Phi_j(\lambda_+)=-\sigma_2\tilde\Phi_j((-\lambda)_+)\sigma_2, \qquad \lambda\in\dot\Sigma_j.
\end{equation}

\item  \begin{equation}\label{sym_(minus)<} 
 (D_j^{-1}\tilde\Phi_j)((-\lambda)_+)=\sigma_3(D_j^{-1}\tilde\Phi_j)(\lambda_+)\sigma_2, \quad \lambda\in\dot\Sigma_j.
\end{equation} 
\end{enumerate}
\end{corollary}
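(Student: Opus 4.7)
My plan is to follow the same template as the analogous corollary in Section \ref{sec:symmetries}, adapting each step to the fact that in the present regime ($A_2<A_1$) the scattering relation \eqref{scat<} lives on $\dot\Sigma_2$ rather than on $\dot\Sigma_1$, and that $\Sigma_2\subset\Sigma_1$ inverts the roles of some of the indices.

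For item (1), I would substitute the symmetry \eqref{sym_Phi_(minus)<} from Proposition \ref{prop:sym-Phi-(minus)<} into \eqref{scat<}. Writing \eqref{scat<} both at $\lambda_+$ and at $(-\lambda)_+$, equating and using $\sigma_3^2=I$, one obtains $s((-\lambda)_+)=\sigma_2 s(\lambda_+)\sigma_2$ on $\dot\Sigma_2$, which is the stated matrix identity; reading off entries produces the four componentwise formulas \eqref{sym_s_(minus)_sigma1<}. For item (2), the plan is to compose this with the first symmetry \eqref{sym_s_minus_sigma1<} from the $\lambda\mapsto-\lambda$ corollary: applying \eqref{sym_s_(minus)_sigma1<} and noting that on the real cut the relation between $(-\lambda)_+$ and $\lambda_-$ is governed by the $\lambda\mapsto-\lambda$ map, the two-step composition $\lambda_+\to(-\lambda)_+\to\lambda_-$ collapses entrywise to the identities \eqref{sym_s_(minus)_sigma1_2<}.

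For item (3), the plan is to transfer the symmetry from $\Phi_j$ to $\tilde\Phi_j=D_j\Phi_j\eul^{-Q_j}$. The key algebraic identity, to be verified directly from \eqref{D-j}, is $D_j(\lambda_+)\sigma_3 D_j^{-1}((-\lambda)_+)=-\sigma_2$ on $\dot\Sigma_j$, together with $Q_j((-\lambda)_+)=-Q_j(\lambda_+)$ coming from \eqref{sym_k_i-b}; substituting these into the definition of $\tilde\Phi_j$ and using \eqref{sym_Phi_(minus)<} produces \eqref{sym_tildePhi_(minus)_sigma_i<} with the stated sign. Item (4) then follows immediately: multiply \eqref{sym_tildePhi_(minus)_sigma_i<} on the left by $D_j^{-1}((-\lambda)_+)$ and use the rearranged form $D_j^{-1}((-\lambda)_+)\sigma_2=-\sigma_3 D_j^{-1}(\lambda_+)$ of the same identity to bring the expression to the form \eqref{sym_(minus)<}.

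The main delicate point, as in the analogous derivations in Section \ref{sec:symmetries}, is the branch bookkeeping for the outer square-root factor $\sqrt{\tfrac{1}{\ii A_j k_j(\lambda)}-1}$ under $\lambda\mapsto-\lambda$ on $\dot\Sigma_j$; this is precisely what pins down the sign in $D_j(\lambda_+)\sigma_3 D_j^{-1}((-\lambda)_+)=\pm\sigma_2$, and hence the sign appearing in items (3)--(4). The apparent discrepancy between the unsigned formula \eqref{sym_tildePhi_(minus)_sigma_i} in the main text and the signed formula \eqref{sym_tildePhi_(minus)_sigma_i<} here deserves particular care and should be cross-checked against the identity $\sqrt{\tfrac{1}{\ii A_j k_j(\lambda_+)}-1}\sqrt{-\tfrac{1}{\ii A_j k_j(\lambda_+)}-1}=-\tfrac{\lambda_+}{k_j(\lambda_+)}$ already exploited in the proof of Proposition \ref{prop:sym-Phi-(minus)}.
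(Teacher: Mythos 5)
Your overall route is exactly the one the paper intends (the paper itself only says ``similarly to the case $A_1<A_2$'' and gives no proof in the appendix): substitute the $\lambda\leftrightarrow-\overline\lambda$ symmetry \eqref{sym_Phi_(minus)<} of $\Phi_j$ into the scattering relation \eqref{scat<} for item (1), compose with the $\lambda\leftrightarrow-\lambda$ symmetry for item (2), and transfer to $\tilde\Phi_j$ via the conjugation identities for $D_j$ and $Q_j$ for items (3)--(4). Items (1), (2) and (4) are handled correctly, and your observation that the derivation is index-for-index the same as in Section \ref{sec:symmetries} because the relation now lives on $\dot\Sigma_2$ is the right one.

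The genuine gap is the sign you assert in item (3). The identity $D_j(\lambda_+)\sigma_3D_j^{-1}((-\lambda)_+)=-\sigma_2$ is false: writing $a=\frac{\lambda A_j}{1-\ii A_jk_j(\lambda_+)}$ and $b=\frac{-\lambda A_j}{1+\ii A_jk_j(\lambda_+)}$ one finds $ab=-1$ and $a+b=\frac{2\ii k_j(\lambda_+)}{\lambda}$, and combining this with $\sqrt{\tfrac{1}{\ii A_jk_j(\lambda_+)}-1}\sqrt{-\tfrac{1}{\ii A_jk_j(\lambda_+)}-1}=-\tfrac{\lambda_+}{k_j(\lambda_+)}$ gives $D_j(\lambda_+)\sigma_3D_j^{-1}((-\lambda)_+)=+\sigma_2$ --- precisely the identity stated and used in the proof of the parallel corollary in Section \ref{sec:symmetries}. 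Nothing in $D_j$ or $k_j$ depends on the ordering of $A_1$ and $A_2$, so the conclusion must be the unsigned $\tilde\Phi_j(\lambda_+)=\sigma_2\tilde\Phi_j((-\lambda)_+)\sigma_2$ as in \eqref{sym_tildePhi_(minus)_sigma_i}; the minus sign in \eqref{sym_tildePhi_(minus)_sigma_i<} is a typo in the paper, as is also visible from the fact that item (4) (which is stated identically to the main text and is correct) together with $D_j((-\lambda)_+)\sigma_3D_j^{-1}(\lambda_+)=\sigma_2$ forces the version without the minus sign. You correctly identified this as the delicate point, but then resolved the tension in the wrong direction --- flipping the sign of the $D_j$ identity to force agreement with the misprinted target, and then using that same wrong sign a second time in item (4) so that the two errors cancel. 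The fix is to keep $+\sigma_2$ throughout and to correct the sign in \eqref{sym_tildePhi_(minus)_sigma_i<} rather than in the lemma about $D_j$.
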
 


\textit{Third symmetry $\lambda \longleftrightarrow \overline\lambda$.} 

\begin{proposition}\label{prop:sym-Phi-barbar_notin_sigma_i<} The following symmetries hold
     \begin{subequations}\label{sym_phi_barbar-notin_Sigma_i<}
    \begin{alignat}{3}\label{sym_phi_barbar-notin_Sigma_1<}
    \overline{\Phi_1^{(1)}(\overline{\lambda})}=-\Phi_1^{(1)}(\lambda), \quad \lambda\in\mathbb{C}\setminus\Sigma_1\\\label{sym_phi_barbar-notin_Sigma_2<}
    \overline{\Phi_2^{(2)}(\overline{\lambda})}=-\Phi_2^{(2)}(\lambda), \quad \lambda\in\mathbb{C}\setminus\Sigma_2.
    \end{alignat}
    \end{subequations}
\end{proposition}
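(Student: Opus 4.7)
The plan is to mirror, essentially verbatim, the proof of Proposition \ref{prop:sym-Phi-barbar_notin_sigma_i} from the case $A_1<A_2$. The underlying argument relies only on the conjugation symmetry of the Lax pair coefficients and on the branch data for $k_j(\lambda)$ and the prefactor $D_j^{-1}(\lambda)$ individually; none of this involves the ordering of $A_1$ and $A_2$. So nothing structural needs to change when passing to $A_2<A_1$.

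First, I would verify that $\overline{U(\overline{\lambda})}\equiv U(\lambda)$ and $\overline{V(\overline{\lambda})}\equiv V(\lambda)$, which is immediate from \eqref{U}--\eqref{V} since $u$, $u_x$, $m$ are real-valued and $\lambda$ enters only as a polynomial in $\lambda^{\pm 1}$. Consequently, if $\Phi_j^{(j)}(\lambda)$ solves \eqref{Lax-x}--\eqref{Lax-t}, then so does $\overline{\Phi_j^{(j)}(\overline{\lambda})}$ for $\lambda\in\mathbb{C}\setminus\Sigma_j$ (no jump is crossed because $\overline{\phantom{x}}\;$ sends $\mathbb{C}\setminus\Sigma_j$ to itself).

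Next, I would compare the asymptotic behavior of these two Jost solutions as $x\to(-1)^j\infty$. Recalling $\Phi_j=D_j^{-1}\tilde\Phi_j\,\eul^{-Q_j}$ with $\tilde\Phi_j\to I$, one needs only the transformation of the prefactor $D_j^{-1}(\lambda)$ and of $Q_j(\lambda)$ under $\lambda\mapsto\overline{\lambda}$ followed by complex conjugation. From \eqref{sym_k_i-c} one has $\overline{k_j(\overline{\lambda})}=-k_j(\lambda)$, which gives $\overline{Q_j(\overline{\lambda})}=Q_j(\lambda)$. Combined with the square-root branch identity $\overline{\sqrt{\tfrac{1}{\ii A_jk_j(\overline{\lambda})}-1}}=-\sqrt{\tfrac{1}{\ii A_jk_j(\lambda)}-1}$ (fixed by the choice $\sqrt{-1}=\ii$ and already used in the $A_1<A_2$ case), one obtains $\overline{D_j^{-1}(\overline{\lambda})}=-D_j^{-1}(\lambda)$. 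The minus sign propagates to the asymptotics, so $\overline{\Phi_j^{(j)}(\overline{\lambda})}$ and $-\Phi_j^{(j)}(\lambda)$ have identical limits as $x\to(-1)^j\infty$. By uniqueness of the Jost solutions defined via the Volterra integral equations \eqref{eq} (which are meaningful on $\mathbb{C}\setminus\Sigma_j$), equality extends to all $(x,t)$, giving \eqref{sym_phi_barbar-notin_Sigma_1<} and \eqref{sym_phi_barbar-notin_Sigma_2<}.

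There is no real obstacle. The only subtle point is checking that the domains $\mathbb{C}\setminus\Sigma_j$ are preserved under $\lambda\mapsto\overline{\lambda}$ (which they are, since $\Sigma_j\subset\mathbb{R}$) and that the square-root branch identity above does not flip sign in a way that depends on the ordering of $A_1$ and $A_2$ (it does not, since it is formulated separately for each $j$). Thus the proof for $A_2<A_1$ is verbatim the proof for $A_1<A_2$, with only the index labels re-interpreted; no new calculation is required.
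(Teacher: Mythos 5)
Your proposal is correct and follows essentially the same route as the paper: the paper proves the $A_1<A_2$ analogue (Proposition \ref{prop:sym-Phi-barbar_notin_sigma_i}) by exactly this argument — the reality of $U$, $V$ under $\lambda\mapsto\overline{\lambda}$, the branch identities $\overline{k_j(\overline{\lambda})}=-k_j(\lambda)$ and $\overline{\sqrt{\tfrac{1}{\ii A_jk_j(\overline{\lambda})}-1}}=-\sqrt{\tfrac{1}{\ii A_jk_j(\lambda)}-1}$, and comparison of asymptotics as $x\to(-1)^j\infty$ — and for Appendix~\ref{app:B} it simply remarks that the symmetries are proved "similarly to the case $A_1<A_2$", which is precisely your observation that the argument is indexwise independent of the ordering of $A_1$ and $A_2$.
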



\begin{corollary}
We have
\begin{enumerate}
    \item \begin{equation}\label{sym_s11_barbar<}
    \overline{s_{11}(\overline{\lambda})}=s_{11}(\lambda),\quad\lambda\in\textcolor{red}{\mathbb{C}\setminus\Sigma_1}.  
    \end{equation}
    
    \item 
    \begin{subequations}\label{sym_tildephi_barbar_notSigma_i<}
    \begin{alignat}{3}\label{sym_tildephi_barbar_notSigma_2-a<}
    \overline{\tilde\Phi_1^{(1)}(\overline{\lambda})}=\tilde\Phi_1^{(1)}(\lambda),\quad \lambda\in\mathbb{C}\setminus\Sigma_1\\\label{sym_tildephi_barbar_notSigma_2-b<}
    \overline{\tilde\Phi_2^{(2)}(\overline{\lambda})}=\tilde\Phi_2^{(2)}(\lambda)\quad \lambda\in\mathbb{C}\setminus\Sigma_2.
    \end{alignat}
    \end{subequations}
    
    \item 
    \begin{equation}\label{sym_barbar_notin_Sigma_i<}
    \overline{(D_j^{-1}\tilde\Phi_j^{(j)})(\overline{\lambda})}=-(D_j^{-1}\tilde\Phi_j^{(j)})(\lambda), \quad \lambda\in\mathbb{C}\setminus\Sigma_j,
\end{equation}
\end{enumerate}
\end{corollary}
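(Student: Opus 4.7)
The plan is to mimic the proof of the analogous corollary that follows Proposition \ref{prop:sym-Phi-barbar_notin_sigma_i} in the $A_1<A_2$ regime, since the three assertions here are formal translations of \eqref{sym_s11_barbar}, \eqref{sym_tildephi_barbar_notSigma_i}, and \eqref{sym_barbar_notin_Sigma_i}, the only substantive change being the domain of validity. For item (1), I would substitute the column symmetries \eqref{sym_phi_barbar-notin_Sigma_1<} and \eqref{sym_phi_barbar-notin_Sigma_2<} from Proposition \ref{prop:sym-Phi-barbar_notin_sigma_i<} into the determinantal representation $s_{11}(\lambda)=\det(\Phi_1^{(1)},\Phi_2^{(2)})(\lambda)$ from \eqref{scatcoeff-a}. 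Conjugation commutes with $\det$, and the two minus signs coming from the two columns cancel, so $\overline{s_{11}(\overline\lambda)}=s_{11}(\lambda)$. The admissible domain is the intersection of the analyticity domains of $\Phi_1^{(1)}$ and $\Phi_2^{(2)}$, i.e.\ $(\mathbb{C}\setminus\Sigma_1)\cap(\mathbb{C}\setminus\Sigma_2)$; since $\Sigma_2\subset\Sigma_1$ under $A_2<A_1$, this reduces to $\mathbb{C}\setminus\Sigma_1$, which accounts for the difference relative to the $A_1<A_2$ case.

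For item (2), I would first record the two auxiliary identities that powered the earlier argument. From \eqref{sym_k_i-c} and the branch choice made in the definition \eqref{D-j} of $D_j$, one obtains $\overline{\sqrt{\tfrac{1}{\ii A_j k_j(\overline\lambda)}-1}}=-\sqrt{\tfrac{1}{\ii A_j k_j(\lambda)}-1}$, which by a direct matrix-entry check gives $\overline{D_j^{-1}(\overline\lambda)}=-D_j^{-1}(\lambda)$; the same symmetry \eqref{sym_k_i-c} also yields $\overline{Q_j(\overline\lambda)}=Q_j(\lambda)$ via \eqref{Qp}. Writing the column-wise version of the definition $\tilde\Phi_j=D_j\Phi_j\,\eul^{Q_j}$, conjugating both sides at $\overline\lambda$ and substituting Proposition \ref{prop:sym-Phi-barbar_notin_sigma_i<} produces two sign flips that cancel, yielding \eqref{sym_tildephi_barbar_notSigma_2-a<} and \eqref{sym_tildephi_barbar_notSigma_2-b<} on the respective natural domains $\mathbb{C}\setminus\Sigma_j$.

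For item (3), I would combine the ingredient $\overline{D_j^{-1}(\overline\lambda)}=-D_j^{-1}(\lambda)$ with the just-established \eqref{sym_tildephi_barbar_notSigma_i<} by factoring the conjugation:
\[
\overline{(D_j^{-1}\tilde\Phi_j^{(j)})(\overline\lambda)}=\overline{D_j^{-1}(\overline\lambda)}\cdot\overline{\tilde\Phi_j^{(j)}(\overline\lambda)}=-D_j^{-1}(\lambda)\,\tilde\Phi_j^{(j)}(\lambda).
\]
There is no genuinely new idea required; the only places to be careful are the bookkeeping of the domain in item (1) and the sign coming from the branch of the square root in $D_j$, both of which are inherited verbatim from the $A_1<A_2$ proof. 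In this sense the ``main obstacle'' is purely cosmetic, consisting of tracking the relabelling $\Sigma_2\leftrightarrow\Sigma_1$ throughout.
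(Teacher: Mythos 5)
Your proposal is correct and follows exactly the route the paper takes for the analogous corollary in the $A_1<A_2$ case (substitute the column symmetries of $\Phi_j^{(j)}$ into \eqref{scatcoeff-a}; use $\overline{D_j^{-1}(\overline{\lambda})}=-D_j^{-1}(\lambda)$ and $\overline{Q_j(\overline{\lambda})}=Q_j(\lambda)$ together with the relation $\tilde\Phi_j=D_j\Phi_j\eul^{Q_j}$; then combine), which is precisely what the appendix means by ``similarly to the case $A_1<A_2$''. Your bookkeeping of the domain in item (1) — that $\Sigma_2\subset\Sigma_1$ when $A_2<A_1$ forces the intersection of analyticity domains to be $\mathbb{C}\setminus\Sigma_1$ — correctly accounts for the only genuine change from the main-text version.
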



\begin{proposition}\label{prop:sym-Phi-barbar_Sigma_i<} The following symmetry holds
\begin{equation}\label{sym_Phi_barbar_Sigma_i<}
    \overline{\Phi_j(\overline{\lambda_+})}=-\Phi_j(\lambda_+)  \qquad \lambda\in\dot\Sigma_j.
\end{equation}
\end{proposition}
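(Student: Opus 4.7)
The plan is to carry out essentially the same argument as in Proposition~\ref{prop:sym-Phi-barbar_Sigma_i} from the main text, since the reasoning depends only on structural properties of the Lax pair coefficients and of $D_j$, none of which is sensitive to whether $A_1<A_2$ or $A_2<A_1$. The only thing that changes between the two cases is the location and relative inclusion of $\Sigma_1$ and $\Sigma_2$, and the proposition is a pointwise statement on $\dot\Sigma_j$ that is immune to this.

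First I would observe that since $U(x,t,\lambda)$ and $V(x,t,\lambda)$ are rational in $\lambda$ with coefficients depending only on $u, u_x, m$ (which are real-valued), one has $\overline{U(\overline\lambda)}\equiv U(\lambda)$ and $\overline{V(\overline\lambda)}\equiv V(\lambda)$ for $\lambda\in\mathbb{C}$. Moreover $U$ and $V$ have no jumps across $\Sigma_j$, so passing to boundary values yields $\overline{U(\lambda_-)}\equiv U(\lambda_+)$ and $\overline{V(\lambda_-)}\equiv V(\lambda_+)$ for $\lambda\in\dot\Sigma_j$. Consequently, if $\Phi_j(x,t,\lambda_+)$ solves the Lax pair \eqref{Lax-x}--\eqref{Lax-t}, then so does $\overline{\Phi_j(x,t,\overline{\lambda_+})}=\overline{\Phi_j(x,t,\lambda_-)}$, as taking conjugates commutes with differentiation in $x$ and $t$.

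Second, I would pin down the constant of proportionality by comparing the two solutions at $x\to(-1)^j\infty$. By construction $\Phi_j = D_j^{-1}\tilde\Phi_j\eul^{-Q_j}$ with $\tilde\Phi_j\to I$, so the asymptotics of $\Phi_j(\lambda_+)$ are fixed by $D_j^{-1}(\lambda_+)\eul^{-Q_j(x,t,\lambda_+)}$. Using \eqref{sym_k_i-c}, i.e.\ $\overline{k_j(\overline\lambda)}=-k_j(\lambda)$, and the branch convention $\overline{\sqrt{\frac{1}{\ii A_j k_j(\overline\lambda)}-1}}=-\sqrt{\frac{1}{\ii A_j k_j(\lambda)}-1}$ (from the choice of branch cut for the outer square root in $D_j$), one obtains $\overline{D_j^{-1}(\lambda_-)}=-D_j^{-1}(\lambda_+)$ and $\overline{Q_j(\lambda_-)}=Q_j(\lambda_+)$ on $\dot\Sigma_j$. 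Hence the two solutions $\overline{\Phi_j(\overline{\lambda_+})}$ and $-\Phi_j(\lambda_+)$ share the same limiting behavior as $x\to(-1)^j\infty$, and by uniqueness of Jost solutions (from the Volterra integral equations \eqref{eq}) they coincide, giving \eqref{sym_Phi_barbar_Sigma_i<}.

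The only delicate point is the sign bookkeeping: one must ensure that the minus signs produced by \eqref{sym_k_i-c} and by the conjugation of the prefactor $\sqrt{\tfrac{1}{2}}\sqrt{\tfrac{1}{\ii A_j k_j(\lambda)}-1}$ combine into exactly one overall $-1$ (rather than $+1$), while the exponent $Q_j$ turns out to be real after conjugation. Since this is entirely analogous to the verification already done in the proof of Proposition~\ref{prop:sym-Phi-barbar_Sigma_i}, and the Jost solutions $\Phi_j$ here are defined by the same Volterra integral equations as in the case $A_1<A_2$, no new obstacle arises.
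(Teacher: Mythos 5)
Your proof is correct and follows essentially the same route as the paper: the paper itself gives no separate argument for the case $A_2<A_1$, simply noting that the symmetries are proved as in Proposition~\ref{prop:sym-Phi-barbar_Sigma_i}, whose proof is exactly the conjugation argument you give (reality of $U$, $V$ and absence of jumps, then matching asymptotics via \eqref{sym_k_i-c} and the conjugation identity for the square-root prefactor in $D_j$). The only cosmetic slip is the remark that ``$Q_j$ turns out to be real after conjugation''; the relevant identity, which you do state correctly, is $\overline{Q_j(\lambda_-)}=Q_j(\lambda_+)$ with $Q_j(\lambda_+)$ purely imaginary.
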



\begin{corollary} We have
\begin{enumerate}
    \item 
    \begin{equation}\label{sym_s_barbar<}
        \overline{s(\overline{\lambda_+})}=s(\lambda_+),\quad \lambda\in\textcolor{red}{\dot\Sigma_2}
    \end{equation}
    
    \item 
    \begin{equation}\label{sym_tildePhi_barbar_Sigma_i<}
    \overline{\tilde\Phi_j(\overline{\lambda_+})}=\tilde\Phi_j(\lambda_+)  \qquad \lambda\in\dot\Sigma_j.
    \end{equation}
    
    \item 
\begin{equation}\label{sym_bar_Sigma_i<}  \overline{(D_j^{-1}\tilde\Phi_j)(\overline{\lambda_+})}=-(D_j^{-1}\tilde\Phi_j)(\lambda_+), \quad \lambda\in\dot\Sigma_j.  
\end{equation}
\end{enumerate}
\end{corollary}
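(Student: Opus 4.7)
The three statements parallel the three items of the analogous corollary proved earlier in the $A_1<A_2$ setting, and the arguments carry over with only minor notational changes (the scattering relation now lives on $\dot\Sigma_2\subset\dot\Sigma_1$ rather than on $\dot\Sigma_1$). My plan is to treat the three items in order, reusing Proposition~\ref{prop:sym-Phi-barbar_Sigma_i<} which precedes the corollary.

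For item (1), I would substitute \eqref{sym_Phi_barbar_Sigma_i<} into both sides of the scattering relation \eqref{scat<}. Since $s(\lambda_\pm)$ does not depend on $(x,t)$, a common factor of $-1$ appears on both sides and cancels; taking complex conjugates then yields $\overline{s(\overline{\lambda_+})}=s(\lambda_+)$ for $\lambda\in\dot\Sigma_2$.

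For item (2), the idea is to pass from $\Phi_j$ to $\tilde\Phi_j$ through the factorization $\tilde\Phi_j(\lambda)=D_j(\lambda)\Phi_j(\lambda)\eul^{Q_j(\lambda)}$, which requires two auxiliary identities on the cut: $\overline{D_j^{-1}(\lambda_-)}=-D_j^{-1}(\lambda_+)$ and $\overline{Q_j(\lambda_-)}=Q_j(\lambda_+)$ for $\lambda\in\dot\Sigma_j$. Both follow from the symmetry $\overline{k_j(\overline\lambda)}=-k_j(\lambda)$ of \eqref{sym_k_i-c}; the first also uses the convention $\overline{\sqrt{w_j}}=-\sqrt{\overline{w_j}}$ that fixes the branch of the scalar prefactor in $D_j^{-1}$, while the second relies on the factor $\ii k_j$ in the definition of $p_j$. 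Combined with \eqref{sym_Phi_barbar_Sigma_i<}, the two minus signs then cancel and one obtains $\overline{\tilde\Phi_j(\overline{\lambda_+})}=\tilde\Phi_j(\lambda_+)$.

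Item (3) follows immediately on combining the identity $\overline{D_j^{-1}(\lambda_-)}=-D_j^{-1}(\lambda_+)$ with item (2), the single surviving minus sign producing the required $-$ on the right-hand side. The most delicate point, exactly as in the analogous corollary in the $A_1<A_2$ setting, is verifying the $D_j^{-1}$ and $Q_j$ identities on the correct sides of the cut, because complex conjugation swaps $\lambda_+$ and $\lambda_-$ and the chosen branch of the square root inside $D_j^{-1}$ must be tracked carefully. Once these algebraic facts are in place, the remainder is routine substitution.
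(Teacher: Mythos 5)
Your proposal is correct and follows essentially the same route as the paper: item (1) by substituting the conjugation symmetry of $\Phi_j$ into the scattering relation, and items (2)--(3) via the auxiliary identities $\overline{D_j^{-1}(\lambda_-)}=-D_j^{-1}(\lambda_+)$ and $\overline{Q_j(\lambda_-)}=Q_j(\lambda_+)$ derived from \eqref{sym_k_i-c} and the branch convention $\overline{\sqrt{w_j}}=-\sqrt{\overline{w_j}}$. The paper itself gives no separate argument in the appendix, deferring to the identical proof in the $A_1<A_2$ case, which is exactly what you have reconstructed.
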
 


\textit{Fourth symmetry $\lambda_+ \longleftrightarrow \lambda_+$.}

\begin{proposition}\label{prop:sym-Phi-bar<} The following symmetry holds
\begin{equation}\label{sym_Phi_bar<}
    \overline{\Phi_j(\lambda_+)}=\ii \Phi_j(\lambda_+)\sigma_1, \qquad \lambda\in\dot\Sigma_j,
    \end{equation}
\end{proposition}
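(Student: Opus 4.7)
The plan is to argue exactly as in Proposition~\ref{prop:sym-Phi-bar}, since the proof does not use the ordering of $A_1$ and $A_2$: it depends only on what happens on the branch cut $\dot\Sigma_j$ of the function $k_j$, and not on how $\dot\Sigma_j$ sits with respect to the other branch cut. So I will essentially repeat that argument verbatim, marking where the ordering of the $A_i$'s could conceivably intervene and checking that it does not.

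First I would show that $\overline{\Phi_j(\lambda_+)}$ is again a solution of the Lax pair \eqref{Lax} at $\lambda=\lambda_+$. The matrices $U$ and $V$ in \eqref{U}--\eqref{V} are single-valued rational expressions in $\lambda$ whose coefficients are real and depend only on $m$, $u$, $u_x$; since $\lambda_+\in\dot\Sigma_j\subset\mathbb{R}$ and $U$, $V$ have no jump across the real axis, one gets $\overline{U(\lambda_+)}=U(\lambda_+)$ and $\overline{V(\lambda_+)}=V(\lambda_+)$. Hence both $\overline{\Phi_j(\lambda_+)}$ and $\ii\Phi_j(\lambda_+)\sigma_1$ solve the Lax system, and they must therefore be related by right multiplication by a constant (in $x,t$) matrix.

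Next I would identify that matrix as $I$ by matching the two solutions as $x\to(-1)^j\infty$. Writing $\Phi_j=D_j^{-1}(\lambda)\tilde\Phi_j\eul^{-Q_j}$ and using $\tilde\Phi_j\to I$ in this limit, together with the facts that (i) $p_j(\lambda_+)$ is purely imaginary because $k_j(\lambda_+)\in\mathbb{R}$ by \eqref{sym_k_i-d}, so $\overline{\eul^{-Q_j(\lambda_+)}}=\eul^{Q_j(\lambda_+)}$, and (ii) $\sigma_1\eul^{-Q_j}\sigma_1=\eul^{Q_j}$ (because $Q_j=p_j\sigma_3$ anticommutes with $\sigma_1$), the whole identification boils down to the purely algebraic equality
\[
\overline{D_j^{-1}(\lambda_+)}=\ii D_j^{-1}(\lambda_+)\sigma_1,\qquad \lambda\in\dot\Sigma_j.
\]
In view of the explicit form \eqref{D-j} of $D_j^{-1}$ this reduces to the two scalar identities
\[
\sqrt{\tfrac{1}{\ii A_jk_j(\lambda_+)}-1}\cdot\tfrac{\lambda_+A_j}{1-\ii A_jk_j(\lambda_+)}=\ii\sqrt{-\tfrac{1}{\ii A_jk_j(\lambda_+)}-1},
\]
\[
\sqrt{-\tfrac{1}{\ii A_jk_j(\lambda_+)}-1}\cdot\tfrac{\lambda_+A_j}{1+\ii A_jk_j(\lambda_+)}=-\ii\sqrt{\tfrac{1}{\ii A_jk_j(\lambda_+)}-1},
\]
which are the ones already verified and used in the proof of Proposition~\ref{prop:sym-Phi-bar}.

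The only delicate point is the bookkeeping of branches in these two square-root identities: one must check that both sides lie on the same Riemann sheet under the branch convention ($\sqrt{\cdot}$ with cut $[0,\infty)$, $\sqrt{-1}=\ii$) fixed after \eqref{D-j}. This can be done by squaring the identities, reducing them to elementary algebra that uses $k_j(\lambda_+)^2=\lambda_+^2-A_j^{-2}\in\mathbb{R}$, and then inspecting the argument of each factor separately on the two components $\lambda_+<-A_j^{-1}$ and $\lambda_+>A_j^{-1}$ of $\dot\Sigma_j$. No ingredient beyond those already in Section~\ref{sec:2} is needed, and the ordering $A_2<A_1$ versus $A_1<A_2$ plays no role.
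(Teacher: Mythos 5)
Your proposal is correct and follows essentially the same route as the paper: the paper proves this statement by exactly the argument of Proposition~\ref{prop:sym-Phi-bar} (reality of $U,V$ on $\dot\Sigma_j$, comparison of asymptotics as $x\to(-1)^j\infty$, and the two square-root identities equivalent to $\overline{D_j^{-1}(\lambda_+)}=\ii D_j^{-1}(\lambda_+)\sigma_1$), and in Appendix~\ref{app:B} it simply invokes that argument since, as you correctly note, only the cut $\Sigma_j$ of $k_j$ and the limit $m\to A_j$ at $(-1)^j\infty$ enter, not the ordering of $A_1$ and $A_2$.
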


\begin{corollary}\label{cor: sym<} We have
\begin{enumerate}
    \item $s(\lambda_+)=\sigma_1\overline{s(\lambda_+)}\sigma_1, ~ \lambda\in\textcolor{red}{\dot\Sigma_2},$
    or, more precisely, for $\lambda\in \dot\Sigma_2$
    \begin{subequations}\label{sym_s_bar_sigma1<}
    \begin{alignat}{3} \label{sym_s_bar_sigma1-a<}
    s_{11}(\lambda_+)&=\overline{s_{22}(\lambda_+)},\\\label{sym_s_bar_sigma1-b<}
    s_{12}(\lambda_+)&=\overline{s_{21}(\lambda_+)}.
    \end{alignat}
    \end{subequations}
    
\item \textcolor{red}{$|s_{11}(\lambda_+)|^2-|s_{12}(\lambda_+)|^2=1$ for $\lambda\in \dot\Sigma_2$}.

\item \textcolor{red}{$\big| \frac{s_{12}(\lambda_+)}{s_{11}(\lambda_+)} \big|\leq 1$ for $\lambda\in \dot\Sigma_2$.
Note that $\big| \frac{s_{12}(\lambda_+)}{s_{11}(\lambda_+)} \big|= 1$ for $\lambda\in \dot\Sigma_2$ iff $s_{11}(\lambda_+)=\infty$}.

\item \begin{subequations}\label{sym_s_(minus)_bar_sigma_1)<}
\begin{alignat}{4}\label{sym_s_(minus)_bar_sigma_1)-a<}
s_{11}(\lambda_-)&=\overline{s_{22}(\lambda_-)},\quad \lambda\in\textcolor{red}{\dot\Sigma_2},\\\label{sym_s_(minus)_bar_sigma_1)-b<}
s_{12}(\lambda_-)&=\overline{s_{21}(\lambda_-)},\quad \lambda\in\textcolor{red}{\dot\Sigma_2}.
\end{alignat}
\end{subequations}
    
    \item  \begin{equation}\label{sym_from_Psi_sigma_i<}
    \Phi_j(\lambda_+)=\ii\Phi_j(\lambda_-) \sigma_1 \qquad \lambda\in\dot\Sigma_j.
    \end{equation}
    
    \item 
     \begin{subequations}\label{sym_from_Psi_sigma_i_<}
     \begin{alignat}{4}\label{sym_from_Psi_sigma_i_a<}
     \Phi_1^{(1)}(\lambda_+)&=\ii\Phi_1^{(2)}(\lambda_-) \qquad \lambda\in\dot\Sigma_1,\\\label{sym_from_Psi_sigma_i_b<}
     \Phi_2^{(2)}(\lambda_+)&=\ii\Phi_2^{(1)}(\lambda_-) \qquad \lambda\in\dot\Sigma_2.
     \end{alignat}
    \end{subequations}
    
    \item 
    \begin{subequations}\label{sym_s11<}
    \begin{alignat}{3} \label{sym_s11-a<}
    s_{11}(\lambda_+)&=s_{22}(\lambda_-),\quad  \lambda\in\textcolor{red}{\dot\Sigma_2},\\\label{sym_s11-b<}
    \textcolor{red}{s_{11}(\lambda_+)}&\textcolor{red}{=\ii s_{12}(\lambda_-)},\quad  \lambda\in\dot\Sigma_0,\\\label{sym_s11-c<}
    \textcolor{red}{s_{11}(\lambda_-)}&=\textcolor{red}{-\ii s_{12}(\lambda_+)},\quad  \lambda\in\dot\Sigma_0.
    \end{alignat}
    \end{subequations}
    
    \item \textcolor{red}{$\big| \frac{s_{12}(\lambda_+)}{s_{11}(\lambda_+)} \big|=1$ for $\lambda\in \dot\Sigma_0$.}
    
    \item  \begin{equation}\label{sym_tildePhi_bar_sigma_i<}
    \overline{\tilde\Phi_j(\lambda_+)}=\sigma_1\tilde\Phi_j(\lambda_+)\sigma_1, \qquad \lambda\in\dot\Sigma_j.
    \end{equation}

    \item 
    \begin{subequations}\label{lim_con<}
    \begin{alignat}{3}
    \tilde\Phi_1^{(1)}(\lambda_-)=\sigma_1\tilde\Phi_1^{(2)}(\lambda_+)\quad \lambda\in\Sigma_1,\\
    \tilde\Phi_2^{(2)}(\lambda_-)=\sigma_1\tilde\Phi_2^{(1)}(\lambda_+)\quad \lambda\in\Sigma_2.
    \end{alignat}
    \end{subequations}
    
    \item 
    \begin{equation}\label{sym_bar<}
    \overline{(D_j^{-1}\tilde\Phi_j)(\lambda_+)}=\ii(D_j^{-1}\tilde\Phi_j)(\lambda_+)\sigma_1, \quad \lambda\in\dot\Sigma_j.
    \end{equation}

    \item 
    \begin{subequations}\label{sym_D-tilde-phi<} 
    \begin{alignat}{4}\label{sym_D-tilde-phi-a<}
    D_j^{-1}(\lambda_-)\tilde\Phi_j^{(j)}(\lambda_-)&=(-\ii D_j^{-1}(\lambda_+)\tilde\Phi_j(\lambda_+)\sigma_1)^{(j)},\quad \lambda\in\textcolor{red}{\dot\Sigma_2},\\ \label{sym_D-tilde-phi-b<}
    \textcolor{red}{D_1^{-1}(\lambda_-)\tilde\Phi_1^{(1)}(\lambda_-)}&=\textcolor{red}{(-\ii D_1^{-1}(\lambda_+)\tilde\Phi_1(\lambda_+)\sigma_1)^{(1)}},\quad \lambda\in\dot\Sigma_0,\\ \label{sym_D-tilde-phi-c<}
    \textcolor{red}{D_2^{-1}(\lambda_-)\tilde\Phi_2^{(2)}(\lambda_-)}&=\textcolor{red}{D_2^{-1}(\lambda_+)\tilde\Phi_2^{(2)}(\lambda_+)},\quad \lambda\in\dot\Sigma_0.
\end{alignat}
\end{subequations}
\end{enumerate}
\end{corollary}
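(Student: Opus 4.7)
The plan is to mirror, item by item, the proof of Corollary~\ref{cor: sym} from the main text, adapting each step to the geometry of the case $A_2<A_1$, where now $\Sigma_2\subset\Sigma_1$ and $\dot\Sigma_0=(-\frac{1}{A_1},-\frac{1}{A_2})\cup(\frac{1}{A_2},\frac{1}{A_1})$ lies inside $\Sigma_1\setminus\Sigma_2$. Most items are short consequences of the preceding propositions together with the scattering relation \eqref{scat<} and the determinantal expressions \eqref{scatcoeff} for the scattering coefficients (which remain formally valid here).

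Items (1)--(4) are purely algebraic. Substituting \eqref{sym_Phi_bar<} into \eqref{scat<} yields $s(\lambda_+)=\sigma_1\overline{s(\lambda_+)}\sigma_1$ on $\dot\Sigma_2$, which upon reading off matrix entries gives (1). Item~(2) then follows from $\det s(\lambda_+)=1$ combined with~(1). Dividing~(2) through by $|s_{11}(\lambda_+)|^2$ produces~(3). Item~(4) combines~(1) with the conjugation identity \eqref{sym_s_barbar<}. Next, item~(5) is obtained by combining \eqref{sym_Phi_bar<} with \eqref{sym_Phi_barbar_Sigma_i<}, and item~(6) is simply its columnwise reading.

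For item~(7), the crucial observation is that in this case it is $s_{11}$ and $s_{12}$ (rather than $s_{11}$ and $s_{21}$ as in the $A_1<A_2$ case) that extend to $\dot\Sigma_0$: indeed, \eqref{scatcoeff-a} writes $s_{11}=\det(\Phi_1^{(1)},\Phi_2^{(2)})$, where $\Phi_2^{(2)}$ is analytic on $\mathbb{C}\setminus\Sigma_2\supset\dot\Sigma_0$ while $\Phi_1^{(1)}$ has boundary values on $\Sigma_1\supset\dot\Sigma_0$, and an identical argument applies to $s_{12}=\det(\Phi_1^{(2)},\Phi_2^{(2)})$. Substituting the columnwise symmetry~(6) into these determinantal formulas on $\dot\Sigma_0$ gives (7b)--(7c), while (7a) on $\dot\Sigma_2$ is a particular case of \eqref{sym_s_(minus)_sigma1_2<}. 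Item~(8) then follows from (7c) together with the boundary value identity $\overline{s_{11}(\lambda_-)}=s_{11}(\lambda_+)$, inherited from \eqref{sym_s11_barbar<} by continuity across $\dot\Sigma_0\subset\Sigma_1$.

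Items~(9)--(12) are formal consequences of the pointwise identities $\overline{D_j^{-1}(\lambda_+)}=\ii D_j^{-1}(\lambda_+)\sigma_1$ and $\overline{Q_j(\lambda_+)}=-Q_j(\lambda_+)$ on $\dot\Sigma_j$, combined with the connection $\tilde\Phi_j=D_j\Phi_j\,\eul^{Q_j}$ and the previously established symmetries of $\Phi_j$; each reduces to a direct substitution, exactly as in the proof of Corollary~\ref{cor: sym}. The main obstacle is precisely the bookkeeping in (7)--(8): one must carefully track which scattering coefficients admit boundary values across $\dot\Sigma_0$ now that $\Sigma_2\subset\Sigma_1$, and track the resulting sign flips (the factors $\mp\ii$ exchange roles relative to the $A_1<A_2$ case, cf.\ \eqref{sym_s11-b<}--\eqref{sym_s11-c<} versus \eqref{sym_s11-b}--\eqref{sym_s11-c}). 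Once this indexing is settled, the remaining computations are routine substitutions.
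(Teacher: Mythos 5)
Your proposal is correct and follows essentially the same route as the paper, which itself only states that these symmetries are proved ``similarly to the case $A_1<A_2$''; your explicit tracking of which coefficients ($s_{11}$ and $s_{12}$ rather than $s_{11}$ and $s_{21}$) admit boundary values on $\dot\Sigma_0$, and of the resulting $\pm\ii$ swaps in items (7)--(8), is exactly the adaptation the paper intends. The only slight imprecision is in item (8): $s_{11}$ is not continuous across $\dot\Sigma_0\subset\Sigma_1$ in this case, so the identity $\overline{s_{11}(\lambda_-)}=s_{11}(\lambda_+)$ should be obtained by taking boundary values of \eqref{sym_s11_barbar<} from the two sides of the cut rather than ``by continuity'' --- but this does not affect the argument.
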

\fi


\subsection{Discrete spectrum} It can be shown in a similar way as for the case $A_1<A_2$ that discrete spectrum is located on $(-\frac{1}{A_1},\frac{1}{A_1})$ (assuming that spectral singularities do not arise in the branch points).
\ifshort
\else
\begin{remark}
In order to show non existence of discrete eigenvalues in $\dot\Sigma_0$, we need to use $\Phi_1$ instead of $\Phi_2$.
\end{remark}
\fi

\subsection{RH problem parametrized by $\BS{(x,t)}$}

\begin{notations*}
In this case it is convenient to introduce $\check \rho$ as

\begin{equation}\label{rho<}
    \check\rho(\lambda)=\frac{s_{12}(\lambda_+)}{s_{11}(\lambda_+)},\quad\lambda\in\dot\Sigma_2\cup\dot\Sigma_0.
\end{equation}
Observe that \eqref{sym_det_cor} and \eqref{sym_sigma_0_<} imply that 
\begin{subequations}\label{prop-rho<}
\begin{alignat}{4}\label{prop-rho-a<}
|\check\rho(\lambda)|\leq 1,\quad\lambda\in\dot\Sigma_2,\\\label{prop-rho-b<}
|\check\rho(\lambda)|= 1,\quad\lambda\in\dot\Sigma_0.
\end{alignat}
\end{subequations}
\end{notations*}
Recalling the analytic properties of eigenfunctions and scattering coefficients, we introduce the matrix-valued function 

\begin{equation}\label{M<}
N(x,t,\lambda)= \left( (D_1^{-1}\tilde\Phi_1^{(1)})(x,t,\lambda),\frac{(D_2^{-1}\tilde\Phi_2^{(2)})(x,t,\lambda)}{s_{11}(\lambda)\eul^{p_1(x,t,\lambda)-p_2(x,t,\lambda)}}\right),\quad\lambda\in\mathbb{C}\setminus\Sigma_2,
\end{equation}
meromorphic in $\mathbb{C}\setminus\Sigma_2$,
where $p_j$, $j=1,2$, are defined in \eqref{p_i}.
Since $D_j^{-1}(\lambda)\tilde\Phi_j(x,t,\lambda)=\Phi_j(x,t,\lambda)\eul^{Q_j(x,t,\lambda)}$, $N(x,t,\lambda)$ can be written as 
\[
N(x,t,\lambda)= \left( \Phi_1^{(1)}(x,t,\lambda),\frac{\Phi_2^{(2)}(x,t,\lambda)}{s_{11}(\lambda)}\right)\eul^{p_1(x,t,\lambda)\sigma_3}.
\]

Proceeding as in case $A_1<A_2$, we conclude that $N(x,t,\lambda)$ can be characterized as the solution 
of the  following Riemann-Hilbert problem: 

Find a $2\times 2$ meromorphic matrix $N(x,t,\lambda)$ that satisfies the following conditions:
\begin{enumerate}[\textbullet]
\item The \emph{jump} condition
\begin{subequations}\label{RH-x-J0<}
\begin{equation}\label{RH-x<}
N^+(x,t,\lambda)=N^-(x,t,\lambda)G(x,t,\lambda),\quad \lambda\in\dot\Sigma_2\cup\dot\Sigma_0,
\end{equation}
where
\begin{equation}\label{J<}
G(x,t,\lambda)=
\begin{pmatrix}
0&\ii\\
\ii&0
\end{pmatrix}\begin{pmatrix}
\eul^{-p_1(\lambda_+)}&0\\
0&\eul^{p_1(\lambda_+)}
\end{pmatrix}G_0(\lambda)
\begin{pmatrix}
\eul^{p_1(\lambda_+)}&0\\
0&\eul^{-p_1(\lambda_+)}
\end{pmatrix}
\end{equation}
with
\begin{equation}\label{J0<}
G_0(\lambda)=\begin{cases} \begin{pmatrix}
1&-\check\rho(\lambda)\\
\overline{\check\rho(\lambda)}&1-|\check\rho(\lambda)|^2
\end{pmatrix}
, \quad \lambda\in\dot\Sigma_2,\\
\begin{pmatrix}
1&-\check\rho(\lambda)\\
\frac{1}{\check\rho(\lambda)}&0
\end{pmatrix}, \quad \lambda\in\dot\Sigma_0.
\end{cases}
\end{equation}
\end{subequations}

\item
The \emph{normalization} condition:
\begin{equation}\label{norm<}
N(x,t,\lambda)=\begin{cases}\sqrt{\frac{1}{2}}
\begin{pmatrix}
-1&\ii\\
\ii&-1
\end{pmatrix}+\ord(\frac{1}{\lambda}),\quad \lambda\to\infty, \quad \lambda\in\mathbb{C}^+,\\
\sqrt{\frac{1}{2}}
\begin{pmatrix}
1&\ii\\
\ii&1
\end{pmatrix} +\ord(\frac{1}{\lambda}),\quad \lambda\to\infty, \quad \lambda\in\mathbb{C}^-,
\end{cases}
\end{equation}

\item
\emph{Singularity} conditions: the singularities of 
$N(x,t,\lambda)$ at  $\pm\frac{1}{A_j}$ are of order not bigger than $\frac{1}{4}$.
\item
\emph{Residue} conditions (if any): given $\accol{\check\lambda_k,  \check\kappa_k}_1^{\check N}$ with $\check\lambda_k\in (0,\frac{1}{A_1})$
and $\check\kappa_k\in\mathbb{R}\setminus\{0\}$, 
$N^{(2)}(x,t,\lambda)$ has simple poles at $\accol{\check\lambda_k, -\check\lambda_k}_1^{\check N}$, with the residues  satisfying the equations

\begin{equation}
\label{res-M-+<}
    \Res_{\pm\check\lambda_k}N^{(2)}(x,t,\lambda)=\check\kappa_k\eul^{-2p_1(\check\lambda_k)}N^{(2)}(x,t,\pm\check\lambda_k).
\end{equation}
\end{enumerate}

\begin{remark}
The solution of the RH problem above, if exists, satisfies the following properties:
\begin{enumerate}
\item $\det N\equiv 1$.
 
\item 
\emph{Symmetries}:
\begin{subequations}\label{sym-M<}
\begin{alignat}{4}
N(-\lambda)&=-\sigma_3 N(\lambda)\sigma_3,\qquad \overline{N(\overline{\lambda})}=-N(\lambda),\qquad \lambda\in\mathbb{C}\setminus\Sigma_1,\\
N((-\lambda)_-)&=-\sigma_3 N(\lambda_+)\sigma_3,\qquad \overline{N(\lambda_-)}=-N(\lambda_+),\qquad \lambda\in\dot\Sigma_2.
\end{alignat}
\end{subequations}
where $N(\lambda)\equiv N(x,t,\lambda)$
(follows from the respective symmetries of the jump matrix and the residue conditions,
assuming the uniqueness of the solution).
\end{enumerate}
\end{remark}

\subsection{Eigenfunctions near $\lambda=0$}
Introducing $\tilde \Phi_{0,j}$ as in \eqref{inteq0_lam} and proceeding as in case $A_1<A_2$,  the following development of $N(x,t,\lambda)$ near $\lambda=0$ holds:
\begin{equation}\label{x-expan_N} 
N(x,t,\lambda)=\ii\begin{pmatrix}
0& b_1(x,t)\\ b_1^{-1}(x,t)&0
\end{pmatrix}+\ii\lambda\begin{pmatrix}
b_2(x,t)&0\\0&b_3(x,t)
\end{pmatrix}+\ord(\lambda^2),
\end{equation}
where 
\begin{subequations}\label{b_j}
    \begin{alignat}{4}
    b_1(x,t) &= \eul^{\frac{1}{2A_1
}\int^x_{-\infty}( m(\xi,t)-A_1)\dd\xi},\\
b_2(x,t) &= (\int_{-\infty}^x e^{-(x-\xi)}\frac{m-A_1}{2}\dd\xi+\frac{A_1}{2})\eul^{-\frac{1}{2A_1
}\int^x_{-\infty}( m(\xi,t)-A_1)\dd\xi},\\
b_3(x,t) &= (\int^{\infty}_x e^{(x-\xi)}\frac{m-A_2}{2}\dd\xi+\frac{A_2}{2})\eul^{\frac{1}{2A_1
}\int^x_{-\infty}( m(\xi,t)-A_1)\dd\xi}.
    \end{alignat}
\end{subequations}

\begin{proposition}\label{prop-recover-x-2}
 $u(x,t)$ and $u_x(x,t)$ can be algebraically expressed in terms of the coefficients
 $b_j(x,t)$, $j=1,3$ in the development \eqref{x-expan_N} of $N(x,t,\lambda)$ as follows:
 \begin{subequations}\label{uu_x_}
    \begin{alignat}{4}
    u(x,t) &= b_1(x,t)b_2(x,t)+ b_1^{-1}(x,t) b_3(x,t),\\
    \label{uu_x-b_}
    u_x(x,t) & = -b_1(x,t)b_2(x,t)+ b_1^{-1}(x,t)b_3(x,t).
    \end{alignat}
\end{subequations}
 \end{proposition}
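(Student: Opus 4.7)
The plan is to mimic the proof of Proposition \ref{prop-recover-x} from the $A_1<A_2$ case, the only difference being that the ``normalizing'' exponential is now built from an integral over $(-\infty,x)$ weighted by $1/A_1$ rather than $(x,\infty)$ weighted by $1/A_2$. Concretely, I would introduce
\[
v(x,t) \coloneqq b_1(x,t)\,b_2(x,t) + b_1^{-1}(x,t)\,b_3(x,t),
\]
and show that $v\equiv u$ by identifying $v$ as the unique solution of $v-v_{xx}=m$ with the prescribed limits at $\pm\infty$.

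First, I would substitute the explicit expressions from \eqref{b_j}. The exponentials cancel in each of the two products, yielding the clean representation
\[
v(x,t) = \frac{A_1+A_2}{2} + \int_{-\infty}^x \eul^{-(x-\xi)}\,\frac{m(\xi,t)-A_1}{2}\,\dd\xi + \int_x^{\infty} \eul^{(x-\xi)}\,\frac{m(\xi,t)-A_2}{2}\,\dd\xi,
\]
which coincides (as it must) with formula \eqref{v-a} obtained in the proof of Proposition \ref{prop-recover-x}. From here the argument is identical to the one already given: differentiating once in $x$ produces
\[
v_x(x,t) = \frac{A_2-A_1}{2} - \int_{-\infty}^x \eul^{-(x-\xi)}\,\frac{m(\xi,t)-A_1}{2}\,\dd\xi + \int_x^{\infty} \eul^{(x-\xi)}\,\frac{m(\xi,t)-A_2}{2}\,\dd\xi,
\]
and differentiating again and combining with $v$ gives $v - v_{xx} = m$. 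Under the assumption $m(x,t)\to A_j$ (sufficiently fast) as $x\to(-1)^j\infty$, dominated convergence yields $v\to A_1$ and $v_x\to 0$ as $x\to-\infty$ and $v\to A_2$, $v_x\to 0$ as $x\to+\infty$; by uniqueness for the ODE $(1-\partial_x^2)v=m$ with these step-like limits, $v\equiv u$, which is \eqref{uu_x_}.

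For the second identity \eqref{uu_x-b_}, I would simply read $v_x$ off the integral representation above and compare with the explicit formulas for $b_1b_2$ and $b_1^{-1}b_3$: the only terms surviving in $-b_1b_2+b_1^{-1}b_3$ are exactly the two integrals plus $\tfrac{A_2-A_1}{2}$, giving $u_x = -b_1b_2 + b_1^{-1}b_3$. There is no real obstacle in this proposition; the content is entirely in the computation of $b_j$ carried out earlier (the Neumann expansion of $\widetilde\Phi_j^0$ near $\lambda=0$), which is the analogue for the case $A_2<A_1$ of the derivation of \eqref{a_j}, with the roles of the two infinities exchanged. If anything needs care, it is checking that the exchange of roles (now $b_1$ involves an integral starting from $-\infty$, and the asymptotic behavior of $N(x,t,0)$ is controlled via $\widetilde\Phi_1(x,t,0)$ rather than $\widetilde\Phi_2(x,t,0)$) produces precisely the formulas in \eqref{b_j}; once this is in hand the algebraic verification of \eqref{uu_x_} is immediate.
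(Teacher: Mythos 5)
Your proposal is correct and follows exactly the route the paper intends: the appendix states Proposition~\ref{prop-recover-x-2} without proof precisely because, after substituting \eqref{b_j} and observing that the exponential factors cancel, one recovers the same integral representation \eqref{v-a} as in the proof of Proposition~\ref{prop-recover-x}, and the rest of that argument (showing $v-v_{xx}=m$ with the correct limits at $\pm\infty$, then reading off $v_x$) carries over verbatim. Your computations of $b_1b_2$, $b_1^{-1}b_3$, and the resulting $v$ and $v_x$ all check out.
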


\subsection{RH problem in the $\BS{(y,t)}$ scale}
Introducing the new space variable $\check y(x,t)$  by
\begin{equation}\label{shkala<}
    \check y(x,t)=x+\frac{1}{A_1}\int^x_{-\infty}(m(\xi,t)-A_1)\dd\xi-A_1^2 t
\end{equation} 
and introducing $\hat N(\check y,t,\lambda)$ so that $N(x,t,\lambda)=\hat N (\check y(x,t),t,\lambda)$, the jump condition \eqref{RH-x<} becomes
\begin{subequations} \label{Jp-y<}
\begin{equation}\label{jump-y<}
\hat N^+(\check y,t,\lambda)=\hat N^-(\check y,t,\lambda)\hat G(\check y,t,\lambda),\quad\lambda\in \dot\Sigma_2\cup\dot\Sigma_0,
\end{equation}
where
\begin{equation}\label{J-J0<}
\hat G(\check y,t,\lambda)\coloneqq\begin{pmatrix}
0&\ii\\
\ii&0
\end{pmatrix}\begin{pmatrix}
\eul^{-\hat p_1(\check y,t,\lambda_+)}&0\\
0&\eul^{\hat p_1(\check y,t,\lambda_+)}
\end{pmatrix}G_0(\lambda)
\begin{pmatrix}
\eul^{\hat p_1(\check y,t,\lambda_+)}&0\\
0&\eul^{-\hat p_1(\check y,t,\lambda_+)}
\end{pmatrix},
\end{equation}
 $G_0(\lambda)$ is defined by \eqref{J0<},
\begin{equation}\label{p-y<}
\hat p_1(\check y,t,\lambda) \coloneqq \frac{\ii A_1 k_1(\lambda)}{2}\left(\check y-\frac{2t}{\lambda^2}\right).
\end{equation}
\end{subequations}
Thus $G(x,t,\lambda)=\hat G(\check y(x,t),t,\lambda)$ and $p_1(x,t,\lambda)=\hat p_1(\check y(x,t),t,\lambda)$, where the jump $G(x,t,\lambda)$ and the phase $p_1(x,t,\lambda)$ are defined in \eqref{J<} and \eqref{p_i}, respectively.

Accordingly,  the residue conditions \eqref{res-M-+<} become
\begin{equation}\label{res-M-hat<}
\begin{split}
\Res_{\pm\check \lambda_k}\hat N^{(2)}(\check y,t,\lambda)&=\check \kappa_k\eul^{-2\hat p_1(\check y,t,\lambda_k)}\hat N^{(1)}(\check y,t,\pm\check \lambda_k),
\end{split}
\end{equation}
with $\check \kappa_k=\frac{1}{\check b_k s'_{11}(\check \lambda_k)}$.

Noticing that the normalization condition \eqref{norm<}, the symmetries \eqref{sym-M<}, and the singularity conditions at $\lambda=\pm \frac{1}{A_j}$ hold in the new scale $(\check y,t)$, we arrive at the basic RH problem.

\begin{rh-pb*}
Given $\check\rho(\lambda)$ for $\lambda\in \dot\Sigma_2\cup\dot\Sigma_0$, and $\accol{\check \lambda_k,\check \kappa_k}_1^{\check N}$ with $\check \lambda_k\in(0,\frac{1}{A_1})$ and $\check\kappa_k\in\mathbb{R}\setminus\{0\}$, 
associated with the initial data $u_0(x)$ in \eqref{mCH1-ic},
find a piece-wise (w.r.t.~$\dot\Sigma_1$) meromorphic, $2\times 2$-matrix valued function $\hat N(\check y,t,\lambda)$ satisfying the following conditions:

\begin{enumerate}[\textbullet]
\item
The jump condition \eqref{Jp-y<} across $\dot\Sigma_2\cup\dot\Sigma_0$ (with $G_0(\lambda)$ defined by \eqref{J0<}).
\item
The residue conditions \eqref{res-M-hat<}.
\item The \emph{normalization} condition:
\begin{equation}\label{norm-hat<}
\hat N(\check y,t,\lambda)=\begin{cases}\sqrt{\frac{1}{2}}
\begin{pmatrix}
-1&\ii\\
\ii&-1
\end{pmatrix}+\ord(\frac{1}{\lambda}), \quad \lambda\to\infty,~ \lambda\in\mathbb{C}^+,\\
\sqrt{\frac{1}{2}}
\begin{pmatrix}
1&\ii\\
\ii&1
\end{pmatrix}+\ord(\frac{1}{\lambda}), \quad \lambda\to\infty,~ \lambda\in\mathbb{C}^-.
\end{cases}
\end{equation}

\item
\emph{Singularity} conditions:
$\hat N(\check y,t,\lambda)$ may have singularities at $\pm\frac{1}{A_j}$ of order $\frac{1}{4}$.

\item \emph{Symmetries}:
\begin{subequations}\label{sym-M-hat<}
\begin{alignat}{4}
\hat N(-\lambda)&=-\sigma_3 \hat N(\lambda)\sigma_3,\qquad \overline{\hat N(\overline{\lambda})}=-N(\lambda),\qquad \lambda\in\mathbb{C}\setminus\Sigma_2,\\
\hat N((-\lambda)_-)&=-\sigma_3 \hat N(\lambda_+)\sigma_3,\qquad \overline{\hat N(\lambda_-)}=-\hat N(\lambda_+),\qquad \lambda\in\dot\Sigma_1.
\end{alignat}
\end{subequations}
where $\hat N(\lambda)\equiv \hat N(\check y,t,\lambda)$.
\end{enumerate}
\end{rh-pb*}

\subsection{Recovering $\BS{u(x,t)}$ from the solution of the RH problem}\label{sec:recover<}


\begin{theorem}\label{main_N}
Assume that $u(x,t)$ is the solution of the Cauchy problem \eqref{mCH1-ic} and let  $\hat N(\check y,t,x)$ be the solution of the associated RH problem \eqref{Jp-y<}--\eqref{norm-hat<}, whose data are determined by $u_0(x)$.
Let  
\begin{equation}\label{M-hat-expand_N}
\hat N(\check y,t,\lambda)=\ii\begin{pmatrix}
0& \hat b_1(\check y,t)\\\hat b_1^{-1}(\check y,t)&0
\end{pmatrix}+\ii\lambda\begin{pmatrix}
\hat b_2(\check y,t)&0\\0&\hat b_3(\check y,t)
\end{pmatrix}+\ord(\lambda^2)
\end{equation}
be the development of  $\hat N(\check y,t,x)$ at $\lambda=0$. 
Then the solution $u(x,t)$ of the Cauchy problem \eqref{mCH1-ic} 
can be expressed, in a parametric form, in terms of $\hat b_j(\check y,t)$, $j=1,2,3$:
$u(x,t)=\hat u(\check y(x,t),t)$, where
\begin{subequations}\label{recover-2_N}
\begin{align}\label{u_(y,t)_N}
&\hat u(\check y,t)=\hat b_1(\check y,t)\hat b_2(\check y,t)+\hat b_1^{-1}(\check y,t)\hat b_3(\check y,t),\\
&x(\check y,t)=\check y-2\ln\hat b_1(\check y,t)+A_2^2 t.
\label{x(y,t)-2_N}
\end{align}
Additionally, $\hat u_x(\check y,t)$ can also be algebraically expressed in terms of $\hat b_j(\check y,t)$, $j=1,2,3$: 
$u_x(x,t)=\hat u_x(\check y(x,t),t)$, where
\begin{equation}
\label{u_x(y,t)-2_N}
\hat u_x(\check y,t)=-\hat b_1(\check y,t)\hat b_2(\check y,t)+\hat b_1^{-1}(\check y,t)\hat b_3(\check y,t).
\end{equation}
\end{subequations}
\end{theorem}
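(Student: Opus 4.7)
The plan is to mirror the reasoning that led to Theorem \ref{main}, transplanting all constructions from the right background ($A_2$) to the left one ($A_1$). The key input already available is Proposition \ref{prop-recover-x-2}, which expresses $u(x,t)$ and $u_x(x,t)$ algebraically in terms of the coefficients $b_1,b_2,b_3$ appearing in the $\lambda=0$ expansion \eqref{x-expan_N} of $N(x,t,\lambda)$, together with the explicit formulas \eqref{b_j} for the $b_j$.

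First, I would observe that by construction $\hat N(\check y(x,t),t,\lambda)=N(x,t,\lambda)$ as functions of $\lambda$, so their Taylor developments at $\lambda=0$ coincide coefficient-wise; that is,
\[
\hat b_j(\check y(x,t),t)=b_j(x,t),\qquad j=1,2,3.
\]
Substituting into Proposition \ref{prop-recover-x-2} immediately yields \eqref{u_(y,t)_N} and \eqref{u_x(y,t)-2_N} written in the $(\check y,t)$ scale, as soon as one recalls that $u(x,t)=\hat u(\check y(x,t),t)$ and $u_x(x,t)=\hat u_x(\check y(x,t),t)$.

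Next, to derive the parametric relation \eqref{x(y,t)-2_N} I would invert the change of variable \eqref{shkala<}. Using the explicit form $b_1(x,t)=\exp\bigl(\frac{1}{2A_1}\int_{-\infty}^x(m(\xi,t)-A_1)\,\dd\xi\bigr)$ from \eqref{b_j}, one has
\[
\frac{1}{A_1}\int_{-\infty}^x(m(\xi,t)-A_1)\,\dd\xi=2\ln b_1(x,t)=2\ln\hat b_1(\check y(x,t),t),
\]
so that \eqref{shkala<} reads $\check y=x+2\ln\hat b_1(\check y,t)-A_1^2 t$, which when solved for $x$ gives the desired expression (with $A_1^2 t$ in place of the stated $A_2^2 t$, which appears to be a typo reflecting the outgoing background rather than the one used to define $\check y$).

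No real obstacle is anticipated: the structural content of the proof—bijectivity of the $(x,t)\leftrightarrow(\check y,t)$ map for each fixed $t$, and the commutativity of the change of variable with the $\lambda$-expansion—is identical to the $A_1<A_2$ case, and all the required symmetry/analytic inputs are already collected in the previous subsections of Appendix \ref{app:B}. The only point requiring mild care is to verify that the relevant signs match: the integral in \eqref{shkala<} runs from $-\infty$ to $x$ (consistent with the $A_1$ background at $-\infty$), and correspondingly the $b_1$ of \eqref{b_j} involves the same integral, so the logarithmic terms combine with a factor $+2$ rather than $-2$, which is exactly what is recorded in \eqref{x(y,t)-2_N}.
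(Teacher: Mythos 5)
Your proposal is correct and follows essentially the same route the paper takes (which states Theorem~\ref{main_N} without further argument, as the direct analogue of Theorem~\ref{main}): identify $\hat b_j(\check y(x,t),t)=b_j(x,t)$, feed the explicit formulas \eqref{b_j} into Proposition~\ref{prop-recover-x-2}, and invert the change of variable \eqref{shkala<} using $\tfrac{1}{A_1}\int_{-\infty}^x(m-A_1)\,\dd\xi=2\ln b_1$. Your observation that \eqref{x(y,t)-2_N} should read $A_1^2t$ rather than $A_2^2t$ is also correct; the stated coefficient is a typo carried over from \eqref{x(y,t)-2}.
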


\ifshort
\else
\begin{proposition}\label{prop-recover_1<}
Let $\hat M(y,t,\mu)$ be the solution of the RH problem \eqref{Jp-y<}--\eqref{sym-M-hat<} whose data are associated with the initial data $u_0(x)$. Define $\hat\mu_1(y,t)\coloneqq\hat M_{11}(y,t,0)+\hat M_{21}(y,t,0)$ and $\hat\mu_2(y,t)\coloneqq\hat M_{12}(y,t,0)+\hat M_{22}(y,t,0)$. The solution $u(x,t)$ of the Cauchy problem \eqref{mCH1-ic} has $x$-derivative given by the parametric representation
\begin{subequations}\label{recover-1<}
\begin{align}\label{u_x(y,t)<}
&u_x(x(y,t),t)=\frac{1}{2A_1}\partial_{ty}\ln\frac{\hat\mu_1(y,t)}{\hat\mu_2(y,t)},\\
\label{x(y,t)<}
&x(y,t)=y+\ln\frac{\hat\mu_1(y,t)}{\hat\mu_2(y,t)}+A_1^2 t.
\end{align}
\end{subequations}
\end{proposition}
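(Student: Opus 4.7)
The plan is to mirror the proof of Proposition \ref{prop-recover_1}, with $y$, $A_2$, and $\hat a_1$ replaced by $\check y$, $A_1$, and $\hat b_1$. The key is to exploit the change of variables \eqref{shkala<}, extract $u_x$ from the mixed second derivative $\partial_{t\check y} x(\check y,t)$, and then identify $x(\check y,t)$ with the expression $\check y + \ln(\hat\mu_1/\hat\mu_2) + A_1^2 t$ using the value of the RH solution at $\lambda=0$.

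First I would compute the Jacobian of the change of variables. Differentiating \eqref{shkala<} in $x$ gives $\check y_x = m/A_1$, hence $x_{\check y}(\check y,t) = A_1/\hat m(\check y,t)$. Next, differentiating \eqref{shkala<} in $t$ and using the conservation law $m_t = -((u^2-u_x^2)m)_x$ together with the boundary behaviour $m(\xi,t)\to A_1$, $u(\xi,t)\to A_1$, $u_x(\xi,t)\to 0$ as $\xi\to-\infty$ (so that the boundary contribution at $-\infty$ produces exactly $A_1^2$, which cancels the $-A_1^2$ in \eqref{shkala<}), one obtains $\check y_t = -(u^2-u_x^2)m/A_1$. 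From the identity $x(\check y(x,t),t)\equiv x$ it follows that $x_t(\check y,t) = -x_{\check y}\check y_t = u^2 - u_x^2$. Differentiating in $\check y$ and using the algebraic identity $(u^2 - u_x^2)_x = 2u_x m$ yields $x_{t\check y} = 2A_1\,\hat u_x$, whence $\hat u_x(\check y,t) = \tfrac{1}{2A_1}\,\partial_{t\check y} x(\check y,t)$.

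It then remains to identify the parametric representation \eqref{x(y,t)<} for $x(\check y,t)$. From the expansion \eqref{x-expan_N} one reads off $\hat N_{11}(\check y,t,0)=0$, $\hat N_{21}(\check y,t,0) = \ii\hat b_1^{-1}$, $\hat N_{12}(\check y,t,0) = \ii\hat b_1$, $\hat N_{22}(\check y,t,0)=0$ (the subscript $\hat M$ in the statement should read $\hat N$), so $\hat\mu_1 = \ii\hat b_1^{-1}$, $\hat\mu_2 = \ii\hat b_1$, and therefore $\ln(\hat\mu_1/\hat\mu_2) = -2\ln\hat b_1$. On the other hand, \eqref{b_j} gives $\ln\hat b_1 = \tfrac{1}{2A_1}\int^x_{-\infty}(m-A_1)\,\dd\xi$, which combined with \eqref{shkala<} yields $x - \check y - A_1^2 t = -2\ln\hat b_1 = \ln(\hat\mu_1/\hat\mu_2)$. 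This is \eqref{x(y,t)<}, and substituting into the formula obtained in the previous paragraph produces \eqref{u_x(y,t)<}. I do not expect a real obstacle: the argument is a direct adaptation of the $A_1<A_2$ case, and the only delicate bookkeeping is the sign and boundary contribution in the computation of $\check y_t$, which should be tracked with care.
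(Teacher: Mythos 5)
Your proposal is correct and follows essentially the same route as the paper's proof: differentiate the identity $x(\check y(x,t),t)=x$, use $x_{\check y}=A_1/\hat m$ and $\check y_t=-\tfrac{1}{A_1}(u^2-u_x^2)m$ to get $x_t=\hat u^2-\hat u_x^2$, and then $x_{t\check y}=2A_1\hat u_x$. You in fact supply slightly more detail than the paper does, by explicitly verifying via \eqref{x-expan_N} and \eqref{b_j} that $x-\check y-A_1^2t=\ln(\hat\mu_1/\hat\mu_2)$ (and by correctly flagging that the $\hat M$ in the statement should read $\hat N$), whereas the paper simply asserts the parametric formula for $x(\check y,t)$.
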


\begin{proof}
In what follows we will express $u_x$ in the variables $(y,t)$. To express a function $ f(x,t)$ in $(y,t)$ we will use the notation $\hat f(y,t)\coloneqq f(x(y,t),t)$, e.g.,
\[
\hat u(y,t)\coloneqq u(x(y,t),t),\ \hat u_x(y,t)\coloneqq u_x(x(y,t),t),\ \hat m(y,t)\coloneqq m(x(y,t),t).
\]
Differentiation of the identity $x(y(x,t),t)=x$ w.r.t.~$t$ gives
\begin{equation}\label{dt-0<}
\partial_t\left(x(y(x,t),t)\right)=x_y(y,t)y_t(x,t)+x_t(y,t)=0.
\end{equation}
From \eqref{shkala<} it follows that
\begin{equation}\label{x_y<}
x_y(y,t)=\frac{A_1}{\hat m(y,t)}
\end{equation}
and 
\[
y_t(x,t)=-\frac{1}{A_1}( u^2- u_x^2)m.
\]
Substituting this and \eqref{x_y<} into \eqref{dt-0<} we obtain
\begin{equation}\label{x_t<}
x_t(y,t)=\hat u^2(y,t)-\hat u_x^2(y,t).
\end{equation}
Further, differentiating \eqref{x_t<} w.r.t.~$y$ we get
\begin{equation}\label{x_yt<}
x_{ty}(y,t)=(\hat u^2(y,t)-\hat u_x^2(y,t))_x x_y(y,t)=2A_1\hat u_x(y,t).
\end{equation}
Therefore, we arrive at a parametric representation of $u_x(x,t)$:
\begin{align*}
&u_x(x(y,t),t)\equiv\hat u_x(y,t)=\frac{1}{2A_1}\partial_{ty}x(y,t),\\
&x(y,t)=y+\ln \frac{\hat \mu_1(y,t)}{\hat \mu_2(y,t)}+A_1^2 t,
\end{align*}
which yields \eqref{recover-1<}.
\end{proof}
\fi



\begin{bibdiv}
\begin{biblist}
\bib{A16}{article}{
   author={Andreiev, K.},
   author={Egorova, I.},
   author={Teschl, G.},
   title={Rarefaction waves for the Korteweg-de Vries equation via nonlinear steepest descent},
   journal={J. Diff. Equ.},
   volume={261},
   date={2016},
     pages={5371--5410},
}
\bib{AK18}{article}{
   author={Anco, Stephen},
   author={Kraus, Daniel},
    title={Hamiltonian structure of peakons as weak solutions for the modified Camassa-Holm equation},
      journal={Discrete Contin. Dyn. Syst.},
   volume={38},
   date={2018},
   number={9},
   pages={4449--4465},
}
\bib{BilM19}{article}{
   author={Bilman, D.},
   author={Miller, P.D.},
     title={A robust inverse scattering transform for the focusing nonlinear
Schr\"odinger equation},
   journal={Comm. Pure Appl. Math.},
   volume={72},
   date={2019},
     pages={1722--1805},
}
\bib{B18}{article}{
   author={Biondini, G.},
        title={Riemann problems and dispersive shocks in self-focusing media},
   journal={Phys. Rev. E},
   volume={98},
   date={2018},
     pages={052220},
}
\bib{BFP16}{article}{
   author={Biondini, G.},
  author={Fagerstroom, E.},
    author={Prinari, B.},
          title={Inverse scattering transform for the defocusing nonlinear
Schr\"odinger equation with fully asymmetric non-zero boundary conditions},
   journal={Physica D: Nonlinear Phenomena},
   volume={333},
   date={2016},
     pages={117--136},
}
\bib{BLM21}{article}{
   author={Biondini, G.},
   author={Lottes, L.},
author={Mantzavinos, D.},
        title={Inverse scattering transform for the focusing
nonlinear Schr\"odinger equation with counterpropagating flows},
   journal={Stud. Appl. Math.},
   volume={146},
   date={2021},
     pages={371--439},
}
\bib{BM17}{article}{
   author={Biondini, G.},
  author={Mantzavinos, D.},
        title={Long-time asymptotics for the focusing nonlinear Schr\"odinger
equation with nonzero boundary conditions at infinity and asymptotic stage of modulational
instability},
   journal={Comm. Pure Appl. Math.},
   volume={70},
   date={2017},
     pages={2300--2365},
}
\bib{BIS10}{article}{
   author={Boutet de Monvel, Anne},
   author={Its, Alexander},
   author={Shepelsky, Dmitry},
   title={Painlev\'{e}-type asymptotics for the Camassa--Holm equation},
   journal={SIAM J. Math. Anal.},
   volume={42},
   date={2010},
   number={4},
   pages={1854--1873},
}
\bib{BKS20}{article}{ 
author={Boutet de Monvel, Anne},
author={Karpenko, Iryna},
author={Shepelsky, Dmitry},
 title={A Riemann-Hilbert approach to the modified Camassa--Holm equation with nonzero boundary conditions}, journal={J. Math. Phys.},
 volume={61}, 
 date={2020},
 number={3},
 pages={031504, 24},
}
\bib{BKS21}{article}{
   author={Boutet de Monvel, Anne},
   author={Karpenko, Iryna},
   author={Shepelsky, Dmitry},
   title={The modified Camassa--Holm equation on a nonzero background: large-time asymptotics for the Cauchy problem},
   status={to appear in: Pure and Applied Functional Analysis},
}
\bib{BKST09}{article}{
   author={Boutet de Monvel, Anne},
   author={Kostenko, Aleksey},
   author={Shepelsky, Dmitry},
   author={Teschl, Gerald},
   title={Long-time asymptotics for the Camassa--Holm equation},
   journal={SIAM J. Math. Anal.},
   volume={41},
   date={2009},
   number={4},
   pages={1559--1588},
}
\bib{BLS21}{article}{
   author={Boutet de Monvel, Anne},
   author={Lenells, Jonatan},
   author={Shepelsky, Dmitry},
      title={The focusing NLS equation with step-like oscillating
background: scenarios of long-time asymptotics},
   journal={Comm. Math. Phys.},
   volume={383},
   date={2021},
   pages={893--952},
}
\bib{BLS22}{article}{
   author={Boutet de Monvel, Anne},
   author={Lenells, Jonatan},
   author={Shepelsky, Dmitry},
      title={The Focusing NLS Equation with Step-Like Oscillating Background: The Genus 3 Sector},
   journal={Comm. Math. Phys.},
   }
\bib{BS08}{article}{
   author={Boutet de Monvel, Anne},
   author={Shepelsky, Dmitry},
   title={Riemann-Hilbert problem in the inverse scattering for the
   Camassa-Holm equation on the line},
   conference={
      title={Probability, geometry and integrable systems},
   },
   book={
      series={Math. Sci. Res. Inst. Publ.},
      volume={55},
      publisher={Cambridge Univ. Press},
      place={Cambridge},
   },
   date={2008},
   pages={53--75},
}
\bib{BS08-2}{article}{
   author={Boutet de Monvel, Anne},
   author={Shepelsky, Dmitry},
   title={Long-time asymptotics of the Camassa--Holm equation on the line},
   conference={
      title={Integrable systems and random matrices},
   },
   book={
      series={Contemp. Math.},
      volume={458},
      publisher={Amer. Math. Soc., Providence, RI},
   },
   date={2008},
   pages={99--116},
}
\bib{BS09}{article}{
   author={Boutet de Monvel, Anne},
   author={Shepelsky, Dmitry},
   title={Long time asymptotics of the Camassa--Holm equation on the
   half-line},
   journal={Ann. Inst. Fourier (Grenoble)},
   volume={59},
   date={2009},
   number={7},
   pages={3015--3056},
}

\bib{BS15}{article}{
   author={Boutet de Monvel, Anne},
   author={Shepelsky, Dmitry},
   title={The Ostrovsky--Vakhnenko equation by a Riemann--Hilbert approach},
   journal={J. Phys. A},
   volume={48},
   date={2015},
   number={3},
   pages={035204, 34},
}
\bib{BSZ17}{article}{
   author={Boutet de Monvel, Anne},
   author={Shepelsky, Dmitry},
   author={Zielinski, Lech},
   title={The short pulse equation by a Riemann-Hilbert
approach},
   journal={Lett. Math. Phys.},
   volume={107},
   date={2017},
   pages={1345--1373},
}

\bib{BV07}{article}{
   author={Buckingham, R.},
   author={Venakides, S.},
     title={Long-time asymptotics of the nonlinear Schr\"odinger equation
shock problem},
   journal={Comm. Pure Appl. Math.},
   volume={60},
   date={2007},
    pages={1349–1414},
}
\bib{CH93}{article}{
   author={Camassa, Roberto},
   author={Holm, Darryl D.},
   title={An integrable shallow water equation with peaked solitons},
   journal={Phys. Rev. Lett.},
   volume={71},
   date={1993},
   number={11},
   pages={1661--1664},
}
\bib{CHH94}{article}{
   author={Camassa, Roberto},
   author={Holm, Darryl D.},
   author={Hyman, James M.},
   title={A new integrable shallow water equation},
   journal={Adv. Appl. Mech.},
   volume={31},
   date={1994},
   number={1},
   pages={1--33},
}
\bib{CS17}{article}{
   author={Chang, Xiangke},
   author={Szmigielski, Jacek},
   title={Liouville integrability of conservative peakons for a modified CH
   equation},
   journal={J. Nonlinear Math. Phys.},
   volume={24},
   date={2017},
   number={4},
   pages={584--595},
}
\bib{CS18}{article}{
   author={Chang, Xiangke},
   author={Szmigielski, Jacek},
   title={Lax integrability and the peakon problem for the modified
   Camassa-Holm equation},
   journal={Comm. Math. Phys.},
   volume={358},
   date={2018},
   number={1},
   pages={295--341},
}
\bib{CGLQ16}{article}{
   author={Chen, Robin Ming},
   author={Guo, Fei},
   author={Liu, Yue},
   author={Qu, Changzheng},
   title={Analysis on the blow-up of solutions to a class of integrable
   peakon equations},
   journal={J. Funct. Anal.},
   volume={270},
   date={2016},
   number={6},
   pages={2343--2374},
}
\bib{CLQZ15}{article}{
   author={Chen, Robin Ming},
   author={Liu, Yue},
   author={Qu, Changzheng},
   author={Zhang, Shuanghu},
   title={Oscillation-induced blow-up to the modified Camassa-Holm equation
   with linear dispersion},
   journal={Adv. Math.},
   volume={272},
   date={2015},
   pages={225--251},
}
\bib{C00}{article}{
   author={Constantin, Adrian},
   title={Existence of permanent and breaking waves for a shallow water
   equation: a geometric approach},
   journal={Ann. Inst. Fourier (Grenoble)},
   volume={50},
   date={2000},
   number={2},
   pages={321--362},
}

\bib{CE98-1}{article}{
   author={Constantin, Adrian},
   author={Escher, Joachim},
   title={Global existence and blow-up for a shallow water equation},
   journal={Ann. Scuola Norm. Sup. Pisa Cl. Sci. (4)},
   volume={26},
   date={1998},
   number={2},
   pages={303--328},
}

\bib{CL09}{article}{
   author={Constantin, Adrian},
   author={Lannes, David},
   title={The hydrodynamical relevance of the Camassa-Holm and
   Degasperis-Procesi equations},
   journal={Arch. Ration. Mech. Anal.},
   volume={192},
   date={2009},
   number={1},
   pages={165--186},
}

\bib{D01}{article}{
   author={Danchin, Rapha\"{e}l},
   title={A few remarks on the Camassa-Holm equation},
   journal={Differential Integral Equations},
   volume={14},
   date={2001},
   number={8},
   pages={953--988},
}
\bib{DT79}{article}{
   author={Deift, P.},
   author={Trubowitz, E.},
   title={Inverse scattering on the line},
   journal={Comm. Pure Appl. Math.},
   volume={32},
   date={1979},
   number={2},
   pages={121--251},
}
\bib{DZ93}{article}{
   author={Deift, P.},
   author={Zhou, X.},
   title={A steepest descend method for oscillatory 
   Riemann--Hilbert problems. Asymptotics for the MKdV equation},
   journal={Ann. Math.},
   volume={137},
   date={1993},
   number={2},
   pages={295--368},
}
\bib{Dem13}{article}{
   author={Demontis, F.},
   author={Prinari, B.},
    author={van der Mee, C.},
     author={Vitale, F.},
   title={The inverse scattering transform for
the defocusing nonlinear Schr\"odinger equations with nonzero boundary conditions},
   journal={Stud. Appl. Math.},
   volume={131},
   date={2013},
     pages={1--40},
}
\bib{E19}{article}{
   author={Eckhardt, Jonathan},
   title={Unique solvability of a coupling problem for entire functions},
   journal={Constr. Approx.},
   volume={49},
   date={2019},
   number={1},
   pages={123--148},
}
\bib{ET13}{article}{
   author={Eckhardt, Jonathan},
   author={Teschl, Gerald},
   title={On the isospectral problem of the dispersionless Camassa-Holm
   equation},
   journal={Adv. Math.},
   volume={235},
   date={2013},
   pages={469--495},
}
\bib{ET16}{article}{
   author={Eckhardt, Jonathan},
   author={Teschl, Gerald},
   title={A coupling problem for entire functions and its application to the
   long-time asymptotics of integrable wave equations},
   journal={Nonlinearity},
   volume={29},
   date={2016},
   number={3},
   pages={1036--1046},
}
\bib{E13}{article}{
   author={Egorova, Iryna},
   author={Gladka, Z.},
    author={Kotlyarov, V.},
   author={Teschl, Gerald},
   title={Long-time asymptotics for the
Korteweg-de Vries equation with steplike initial data},
   journal={Nonlinearity},
   volume={26},
   date={2013},
   pages={1839--1864},
}
\bib{E09}{article}{
   author={Egorova, Iryna},
   author={Gruner, K.},
   author={Teschl, Gerald},
   title={On the Cauchy problem for the Korteweg-de Vries
equation with steplike finite-gap initial data I. Schwartz-type perturbations},
   journal={Nonlinearity},
   volume={22},
   date={2009},
   pages={1431--1457},
}
\bib{E22}{article}{
   author={Egorova, Iryna},
   author={Michor, Johanna},
   author={Teschl, Gerald},
   title={Soliton asymptotics for KdV shock waves via classical inverse scattering},
   journal={Preprint arXiv:2109.08423},
}
\bib{E11}{article}{
   author={Egorova, Irina},
   author={Teschl, Gerald},
   title={On the Cauchy problem for the Korteweg-de Vries equation with
steplike finite-gap initial data II. Perturbations with finite moments},
   journal={J. d’Analyse Math.},
   volume={115},
   date={2011},
   pages={71--101},
}
\bib{EH16}{article}{
   author={El, G.A.},
   author={Hoefer, M.A.},
   title={Dispersive shock waves and modulation theory},
   journal={Phys. D},
   volume={333},
   date={2016},
   pages={11--65},
}
\bib{F95}{article}{
   author={Fokas, A. S.},
   title={On a class of physically important integrable equations},
   note={The nonlinear Schr\"{o}dinger equation (Chernogolovka, 1994)},
   journal={Phys. D},
   volume={87},
   date={1995},
   number={1-4},
   pages={145--150},
}
\bib{FLQ21}{article}{
   author={Fromm, Samuel},
   author={Lenells, Jonatan},
   author={Quirchmayr, Ronald},
   title={The defocusing nonlinear Schr\"odinger equation with step-like oscillatory initial data},
   journal={Preprint 	arXiv:2104.03714},
}
\bib{FGLQ13}{article}{
   author={Fu, Ying},
   author={Gui, Guilong},
   author={Liu, Yue},
   author={Qu, Changzheng},
   title={On the Cauchy problem for the integrable modified Camassa-Holm
   equation with cubic nonlinearity},
   journal={J. Differential Equations},
   volume={255},
   date={2013},
   number={7},
   pages={1905--1938},
}
\bib{Fu96}{article}{
   author={Fuchssteiner, Benno},
   title={Some tricks from the symmetry-toolbox for nonlinear equations:
   generalizations of the Camassa-Holm equation},
   journal={Phys. D},
   volume={95},
   date={1996},
   number={3-4},
   pages={229--243},
}
\bib{GL18}{article}{
   author={Gao, Yu},
   author={Liu, Jian-Guo},
   title={The modified Camassa-Holm equation in Lagrangian coordinates},
   journal={Discrete Contin. Dyn. Syst. Ser. B},
   volume={23},
   date={2018},
   number={6},
   pages={2545--2592},
}
\bib{GR19}{article}{
   author={Grudsky, S.},
   author={Rybkino, A.},
   title={On classical solutions of the KdV equation},
   journal={Preprint arXiv:1905.08372},
   }
\bib{GLOQ13}{article}{
   author={Gui, Guilong},
   author={Liu, Yue},
   author={Olver, Peter J.},
   author={Qu, Changzheng},
   title={Wave-breaking and peakons for a modified Camassa-Holm equation},
   journal={Comm. Math. Phys.},
   volume={319},
   date={2013},
   number={3},
   pages={731--759},
}
\bib{HFQ17}{article}{
   author={Hou, Yu},
   author={Fan, Engui},
   author={Qiao, Zhijun},
   title={The algebro-geometric solutions for the Fokas-Olver-Rosenau-Qiao
   (FORQ) hierarchy},
   journal={J. Geom. Phys.},
   volume={117},
   date={2017},
   pages={105--133},
}
\bib{IU86}{article}{
   author={Its, A.R.},
   author={Ustinov, A.F.},
      title={Time asymptotics of the solution of the Cauchy problem for the
nonlinear Schr\"odinger equation with boundary conditions of finite density type},
   journal={Dokl. Akad. Nauk SSSR},
   volume={291},
   date={1986},
   pages={91--95},
}
\bib{J15}{article}{
   author={Jenkins, R.},
   title={Regularization of a sharp shock by the defocusing nonlinear Schr\"odinger equation},
   journal={Nonlinearity},
   volume={28},
   date={2015},
   pages={2131--21802},
}
\bib{J02}{article}{
   author={Johnson, R. S.},
   title={Camassa--Holm, Korteweg--de Vries and related models for water
   waves},
   journal={J. Fluid Mech.},
   volume={455},
   date={2002},
   pages={63--82},
}
\bib{K16}{article}{
   author={Kang, Jing},
   author={Liu, Xiaochuan},
   author={Olver, Peter J.},
   author={Qu, Changzheng},
   title={Liouville correspondence between the modified KdV hierarchy and
   its dual integrable hierarchy},
   journal={J. Nonlinear Sci.},
   volume={26},
   date={2016},
   number={1},
   pages={141--170},
}
\bib{K86}{article}{
   author={Kappeler, T.},
   title={Solution of the Korteveg-de Vries equation with steplike initial data},
   journal={J. of Differential Equations},
   volume={63},
   date={1986},
   pages={306--331},
}
\bib{L04}{article}{
   author={Lenells, Jonatan},
   title={The correspondence between KdV and Camassa-Holm},
   journal={Int. Math. Res. Not.},
   date={2004},
   number={71},
   pages={3797--3811},
}
\bib{L18}{article}{
   author={Lenells, Jonatan},
   title={Matrix Riemann-Hilbert problems with jumps across Carleson contours},
   journal={Monatshefte für Mathematik},
   volume={186},
   date={2018},
   number={1},
   pages={111--152},
}
\bib{LLOQ14}{article}{
   author={Liu, Xiaochuan},
   author={Liu, Yue},
   author={Olver, Peter J.},
   author={Qu, Changzheng},
   title={Orbital stability of peakons for a generalization of the modified
   Camassa-Holm equation},
   journal={Nonlinearity},
   volume={27},
   date={2014},
   number={9},
   pages={2297--2319},
}
\bib{LOQZ14}{article}{
   author={Liu, Yue},
   author={Olver, Peter J.},
   author={Qu, Changzheng},
   author={Zhang, Shuanghu},
   title={On the blow-up of solutions to the integrable modified
   Camassa-Holm equation},
   journal={Anal. Appl. (Singap.)},
   volume={12},
   date={2014},
   number={4},
   pages={355--368},
}
\bib{M13}{article}{
   author={Matsuno, Yoshimasa},
   title={B\"{a}cklund transformation and smooth multisoliton solutions for a
   modified Camassa--Holm equation with cubic nonlinearity},
   journal={J. Math. Phys.},
   volume={54},
   date={2013},
   number={5},
   pages={051504, 14},
}
\bib{MN02}{article}{
   author={Mikhailov, A. V.},
   author={Novikov, V. S.},
   title={Perturbative symmetry approach},
   journal={J. Phys. A},
   volume={35},
   date={2002},
   number={22},
   pages={4775--4790},
}
\bib{M15}{article}{
   author={Minakov, A.},
     title={Riemann--Hilbert problem for Camassa--Holm equation with step-like initial data},
   journal={J. Math. Anal. Appl.},
   volume={429},
   date={2015},
   number={1},
   pages={81--104},
}
\bib{M16}{article}{
   author={Minakov, A.},
     title={Asymptotics of step-like solutions for the Camassa-Holm equation},
   journal={J. Differential Equations.},
   volume={261},
   date={2016},
   number={11},
   pages={6055--6098},
}
\bib{N09}{article}{
   author={Novikov, Vladimir},
   title={Generalizations of the Camassa--Holm equation},
   journal={J. Phys. A},
   volume={42},
   date={2009},
   number={34},
   pages={342002, 14},
}
\bib{OR96}{article}{
   author={Olver, P. J.},
   author={Rosenau, P.},
   title={Tri-hamiltonian duality between solitons and solitary-wave
solutions having compact support},
   journal={Phys. Rev. E},
   volume={53},
   date={1996},
   number={2},
   pages={1900},
}
\bib{Q03}{article}{
   author={Qiao, Zhijun},
   title={The Camassa--Holm hierarchy, $N$-dimensional integrable systems,
   and algebro-geometric solution on a symplectic submanifold},
   journal={Comm. Math. Phys.},
   volume={239},
   date={2003},
   number={1-2},
   pages={309--341},
}
\bib{Q06}{article}{
   author={Qiao, Zhijun},
   title={A new integrable equation with cuspons and W/M-shape-peaks
   solitons},
   journal={J. Math. Phys.},
   volume={47},
   date={2006},
   number={11},
   pages={112701, 9},
}
\bib{QLL13}{article}{
   author={Qu, Changzheng},
   author={Liu, Xiaochuan},
   author={Liu, Yue},
   title={Stability of peakons for an integrable modified Camassa-Holm
   equation with cubic nonlinearity},
   journal={Comm. Math. Phys.},
   volume={322},
   date={2013},
   number={3},
   pages={967--997},
}
\bib{S96}{article}{
   author={Schiff, Jeremy},
   title={Zero curvature formulations of dual hierarchies},
   journal={J. Math. Phys.},
   volume={37},
   date={1996},
   number={4},
   pages={1928--1938},
}

\bib{WLM20}{article}{
   author={Wang, Gaihua},
   author={Liu, Q.P.},
   author={Mao, Hui},
   title={The modified Camassa-Holm equation: Bäcklund transformation and
   nonlinear superposition formula},
   journal={J. Phys. A},
   volume={53},
   date={2020},
   status={see \url{https://doi.org/10.1088/1751-8121/ab7136}},
}

\bib{XZ00}{article}{
   author={Xin, Zhouping},
   author={Zhang, Ping},
   title={On the weak solutions to a shallow water equation},
   journal={Comm. Pure Appl. Math.},
   volume={53},
   date={2000},
   number={11},
   pages={1411--1433},
}
\bib{YQZ18}{article}{
   author={Yan, Kai},
   author={Qiao, Zhijun},
   author={Zhang, Yufeng},
   title={On a new two-component $b$-family peakon system with cubic
   nonlinearity},
   journal={Discrete Contin. Dyn. Syst.},
   volume={38},
   date={2018},
   number={11},
   pages={5415--5442},
}

\end{biblist}
\end{bibdiv}
\end{document}